\providecommand\@dotsep{5}
\def\listtodoname{List of Todos}
\def\listoftodos{\@starttoc{tdo}\listtodoname}
\numberwithin{equation}{section}
\newtheorem{theorem}{Theorem}[section]
\newtheorem{proposition}[theorem]{Proposition}
\newtheorem{lemma}[theorem]{Lemma}
\newtheorem{corollary}[theorem]{Corollary}
\newtheorem{claim}[theorem]{Claim}
\newtheorem{definition}[theorem]{Definition}
\newcommand\R{\mathbb R}
\begin{document}
	
	\title[Existence of multiple solutions for a Schröndiger logarithmic equation]
	{Existence of multiple solutions for a Schröndiger logarithmic equation via Lusternik-Schnirelmann category}

	\author{Claudianor O. Alves$^*$}
	\author{Ismael S. da Silva}
	
	\address[Claudianor O. Alves]{\newline\indent Unidade Acad\^emica de Matem\'atica
		\newline\indent 
		Universidade Federal de Campina Grande,
		\newline\indent
		58429-970, Campina Grande - PB - Brazil}
	\email{\href{mailto:coalves@mat.ufcg.edu.br}{coalves@mat.ufcg.edu.br}}
	
	\address[Ismael S. da Silva]
	{\newline\indent Unidade Acad\^emica de Matem\'atica
		\newline\indent 
		Universidade Federal de Campina Grande,
		\newline\indent
		58429-970, Campina Grande - PB - Brazil}
	\email{\href{ismael.music3@gmail.com}{ismael.music3@gmail.com}}

	\pretolerance10000
	
	
	\begin{abstract}
		\noindent This paper concerns the existence of multiple solutions for a Schrödinger logarithmic equation of the form
		\begin{equation}
		\left\{\begin{aligned}
		-\varepsilon^2\Delta u + V(x)u & =u\log u^2,\;\;\mbox{in}\;\;\mathbb{R}^{N},\nonumber \\
		u \in H^{1}(\mathbb{R}^{N}),
		\end{aligned}
		\right.\leqno{(P_\varepsilon)}
		\end{equation} 
	where $V:\mathbb{R}^N\longrightarrow \mathbb{R}$ is a continuous function that satisfies some technical conditions and $\varepsilon$ is a positive parameter. We will establish the multiplicity of solution for $(P_\varepsilon)$ by using the notion of Lusternik-Schnirelmann category, by introducing a new function space where the energy functional is $C^1$.  
\end{abstract}
	
	\thanks{Claudianor Alves is the corresponding author and he was partially supported by  CNPq/Brazil 307045/2021-8 and Projeto Universal FAPESQ 3031/2021, e-mail:coalves@mat.ufcg.edu.br}
	\thanks{Ismael da Silva was partially supported by  CAPES, Brazil.}
	\subjclass[2019]{Primary:35J15, 35J20; Secondary: 26A27} 
	\keywords{Schrödinger logarithmic equation, multiple solutions,  Lusternik-Schnirelmann category}

	\maketitle

	\section{Introduction}
	
	In a lot of recent works, the class of elliptic equations of the form
	
	$$ \leqno{(E_1)}	\hspace{4 cm} -\varepsilon^2\Delta u + V(x)u=f(u), \,\,\,\,x \in \mathbb{R}^N,$$
	where $\varepsilon>0$ is a positive parameter and $V$ and $f$ are continuous function under some suitable conditions, has been widly explored for many authors. See, e.g., Alves and Figueiredo \cite{Alves-Giovany}, Ambrosetti, Badiale and Cingolani  \cite{AMbBadCin},  Cingolani and Lazzo \cite{Cingolani-Lazzo}, del Pino and Felmer \cite{del Pino-Felmer}, Oh \cite{Oh1,Oh2}, Rabinowitz \cite{Rabinowitz} and references therein. A motivation for the study of the equation $(E_1)$ is the fact that the solutions of $(E_1)$ have important contributions in some physical problems. It is well known that the solutions of $(E_1)$ are related with the so-called \textit{standing waves solutions} of the nonlinear Schrödinger equation
	$$i\varepsilon\frac{\partial \psi}{\partial t}=-\varepsilon^2 \Delta \psi +(V(x)+K)\psi-f(\psi),$$
	that is, solutions of the form $\psi(x,t)=e^{(-iKt/\varepsilon)}u(x)$, where $u$ is a real value function.  For the reader interesting in Schr\"odinger operators; see e.g. \cite{Almeida,Bergfeldt, Fefferman}.
	
	More recently, many researchers have devoted their attention to the specific case \linebreak $f(t)=t\log t^2$. This special case has relevant physical applications, namely, quantum mechanics, quantum optics, effective quantum gravity, transport and diffusion phenomena and Bose-Einstein condensation (see, e.g., \cite{Zloshchastiev} and references therein). Many authors have presented different approaches in order to find solutions of the equation
	$$ \leqno{(E_2)}	\hspace{4 cm} -\varepsilon^2\Delta u + V(x)u=u\log u^2, \,\,\,\,x \in \mathbb{R}^N,$$
	under different assumptions on $V$ and $\varepsilon$.
	
	The first paper that we would like to cite is a pioneering article  due to Cazenave \cite{Cazenave}. In that work, the author studied the following logarithmic Schrödinger equation, 
	$$\leqno{(E_3)}\hspace{4 cm}iu_t+\Delta u + u\log u^2=0,\,\,\,\,(t,x) \in \mathbb{R}\times\mathbb{R}^N,$$
	by working on the space $W:=\left\{u \in H^1(\mathbb{R}^N);\,\displaystyle{\int_{\mathbb{R}^N}}\mid u^2\log u^2\mid dx<\infty\right\}$ and considering a suitable Luxemburg norm type on $W$. More precisely, by taking the N-function
	$$
	A(s):=\left\{\begin{aligned}
	-\frac{1}{2} s^2 \log s^2,\quad \quad0\leq s&\leq e^{-3}\\
	3s^2 +4e^{-3}s-e^{-6},\quad \quad&s\geq e^{-3},
	\end{aligned}
	\right.
	$$
	$$
	||u||_{A} = \inf \left\{\lambda > 0 \; ; \; \int_{\Omega}A\left(\dfrac{|u|}{\lambda}\right) \leq 1 \right\}
	$$
	and $(||\cdot||_{H^1(\mathbb{R}^N)}+||\cdot||_{A})$ as the norm on $W$, Cazenave showed the existence of infinitely many critical points for the functional 
	$$L(u)=\frac{1}{2}\int_{\mathbb{R}^{N}}|\nabla u|^2dx-\frac{1}{2}\int_{\mathbb{R}^{N}}u^2\log u^2,\quad u \in W$$
	on the set
	$\Sigma:=\left\{u\in W;\,\displaystyle{\int_{\mathbb{R}^N}}|u|^2dx=1\right\}$. Moreover, the properties of the functional 
	$L$ constrained to $\Sigma$ are explored in \cite[Section 4]{Cazenave} to look by solutions of the equation $(E_3)$ given by $\varphi(t,x):=e^{-iHt}\phi(x)$, with $H:=\displaystyle{\inf_{u\in\Sigma}}\, L(u)$ and $E(\phi)=H$. 
	
	Later, others frameworks have been developed in order to find solutions of $(E_2)$. In \cite{d'Avenia}, d'Avenia, Montefusco and Squassina considered the case with $\varepsilon=1$ and $V\equiv 1$, i.e.,
	 \begin{equation}
	 \left\{\begin{aligned}
	 -\Delta u + u  =u\log &u^2,\;\;\mbox{in}\;\;\mathbb{R}^{N},\nonumber \\
	 u \in H^{1}(\mathbb{R}^{N}).
	 \end{aligned}
	 \right.\leqno{(P_1)}
	 \end{equation} 
	In that work the authors have given information about the existence, uniqueness and multiplicity of solutions for $(P_1)$ by employing the critical point theory for nonsmooth functionals developed by Degiovanni and Zani in \cite{Degiovanni}.
	
	Posteriorly, in \cite{Squassina-Szulkin}, Squassina and Szulkin proved the existence of multiple solutions for the problem
	\begin{equation}
	\left\{\begin{aligned}
	-\Delta u &+ V(x)u  =Q(x)u\log u^2,\;\;\mbox{in}\;\;\mathbb{R}^{N},\nonumber \\
	&u \in H^{1}(\mathbb{R}^{N}),
	\end{aligned}
	\right.\leqno{(P_2)}
	\end{equation} 
	where $V$, $Q\in C(\mathbb{R}^N,\mathbb{R})$ are $1$-periodic functions such that
	$$\min_{x \in \mathbb{R}^N}\,Q(x),\,\,\,\min_{x\in\mathbb{R}^N}(V(x)+Q(x))>0.$$
	In that paper, it used the critical point theory for functionals that are sum of a $C^1$-functional with a convex and lower semicontinuous functional due to Szulkin (see \cite{Szulkin}). The theory developed by Szulkin in \cite{Szulkin} has been used in many recent works to study the existence of solution for elliptic equations involving the logarithmic nonlinearity $f(t)=t\log t^2$. In a recent paper of 2018, using a nonsmooth version of the Mountain Pass Theorem found in \cite{Szulkin}, Alves and de Morais Filho showed in \cite{Alves-de Morais} the existence and concentration of positive solutions for the following problem
	\begin{equation}
	\left\{\begin{aligned}
	-\varepsilon^2\Delta u &+ V(x)u  =u\log u^2,\;\;\mbox{in}\;\;\mathbb{R}^{N},\nonumber \\
	&u \in H^{1}(\mathbb{R}^{N}),
	\end{aligned}
	\right.\leqno{(P_3)}
	\end{equation} 
	with $\varepsilon>0$, $N\geq 3$ and $V\in C(\mathbb{R}^N,\mathbb{R})$ verifying 
	$$-1<V_0:=\inf_{x \in \mathbb{R}^N} \,V(x)<\lim_{|x| \to \infty}\,V(x)=:V_\infty<\infty.$$
	Later, Alves and Ji in \cite{Alves-Ji} improved the main  result in \cite{Alves-de Morais} by adapting the penalization method introduced by del Pino and Felmer in \cite{del Pino-Felmer}. In that paper,  the authors established the existence and concentration of positive solutions for $(P_3)$ by assuming that $V\in C(\mathbb{R}^N,\mathbb{R})$ satisfies
	$$V_0=\inf_{x \in \Lambda} V(x) < \min_{x \in \partial \Lambda}V(x),$$
	for some bounded and open set $\Lambda\in \mathbb{R}^N$, with $V_0=\displaystyle \inf_{\R^N} V$ (in \cite{JiXue} and \cite{Xiaoming}  the reader also can find a study on a fractional logarithmic Schrödinger with similar conditions on the potential $V$). In \cite{Alves-Ji}, the authors also used the critical point theory for lower semicontinuous (l.s.c.) functionals found in \cite{Szulkin}. Here, we also cite the works \cite{Alves-Ji2, Ji-Szulkin} in which the theory developed in \cite{Szulkin} was used to study the existence of solution for  a class of logarithmic elliptic equations.

	Motivated by the above mentioned works, in the present paper we are interested in the following problem
	
	\begin{equation}
	\left\{\begin{aligned}
	-\varepsilon^2 \Delta u + V( x)u & =u\log u^2,\;\;\mbox{in}\;\;\mathbb{R}^{N},\nonumber \\
	u \in H^{1}(\mathbb{R}^{N}),
	\end{aligned}
	\right.\leqno{(P_\varepsilon)}
	\end{equation} 
	where $V:\mathbb{R}^N\longrightarrow \mathbb{R}$ is a continuous function satisfying
	\\
	$(V_1)$: $-1<\displaystyle{\inf_{x \in \mathbb{R}^N}}V(x)$;\\
	$(V_2)$: There exists an open and bounded set $\Lambda \subset \mathbb{R}^N$ satisfying
	$$ V_0:=\inf_{x \in \Lambda} V(x) < \min_{x \in \partial \Lambda}V(x).$$ 
	
	Without lost of generality, we will assume throughout this work that $ 0 \in \Lambda$ and $V_0=V(0)$. Note that, by the change of variable $u(x)=v(x/\varepsilon)$, the problem $(P_\varepsilon)$ is equivalent to the problem 
		\begin{equation}
		\left\{\begin{aligned}
			- \Delta v + V(\varepsilon x)v & =v\log v^2,\;\;\mbox{in}\;\;\mathbb{R}^{N},\nonumber \\
			v \in H^{1}(\mathbb{R}^{N}),
		\end{aligned}
		\right.\leqno{(S_\varepsilon)}
	\end{equation}
	
	The problem $(S_\varepsilon)$ has some interesting difficulties in the mathematical point of view. For example, if one tries to apply variational method in order to solve the problem $(S_\varepsilon)$, the natural candidate to be an energy functional is 
	$$E_\varepsilon(u)=\frac{1}{2}\int_{\mathbb{R}^{N}}(|\nabla u|^2+(V(\varepsilon x))|u|^2)dx-\int_{\mathbb{R}^{N}}F(u)dx,$$
	with
	$$F(t)= \displaystyle{\int_{0}^{t}s\log s^2 \,ds}= \frac{1}{2}t^2 \log t^2-\frac{t^2}{2}.$$
	However, it is well known that the functional $E_\varepsilon$ is not well defined on $H^1(\mathbb{R}^N)$ because there exist functions $u \in H^1(\mathbb{R}^N)$ such that $\displaystyle{\int_{\mathbb{R}^N}} u^2\log u^2 =-\infty$, which gives the possibility that $E_\varepsilon(u)=\infty$. In order to carry out this difficulty, in the above mentioned works \cite{Alves-de Morais,Alves-Ji, Ji-Szulkin,Squassina-Szulkin}, the authors have used a decomposition of the form
	\begin{equation}\label{55}
	F_2(t)-F_1(t)=\frac{1}{2}t^2 \log t^2\,\,\,\, \forall t \in \mathbb{R},
	\end{equation}
	where  $F_1$ is a $C^1$ and nonnegative convex function, and $F_2$ is also a $C^1$ function with subcritical growth (see Section 3 in the sequel). Thereby, it is possible to rewrite the functional $E_\varepsilon$ as 
	\begin{equation} \label{NOVAE}
	E_\varepsilon(u)=\frac{1}{2}\int_{\mathbb{R}^{N}}(|\nabla u|^2+(V(\varepsilon x)+1)|u|^2)dx+\int_{\mathbb{R}^N}F_1(u)dx-\int_{\mathbb{R}^{N}}F_2(u)dx.
	\end{equation}
	This little trick ensures that $E_\varepsilon$ can be decomposed as a sum of a $C^1$-functional with a convex and l.s.c. functional. In this way, the critical point theory for l.s.c. functionals developed in \cite{Szulkin} can be used to look by solutions of the problem $(P_\varepsilon)$. 
	
	The variational methods used in \cite{Alves-de Morais,Alves-Ji, d'Avenia, Ji-Szulkin,Squassina-Szulkin} are very interesting, because they permit to work directly with the $H^{1}$-topology or with a similar one, however as in the papers above the energy functional is not $C^1$, some questions involving for example the existence of multiple solutions via the Lusternik-Schnirelmann category cannot be explored in those works. Motivated by this fact, the present intend to treat this situation, by coming back to the approach found in \cite{Cazenave}, but introducing a new Banach space where the energy functional is $C^1$. Hereafter, we will prove that the function $F_1$ in (\ref{55}) (see also (\ref{F1})) is a N-function that satisfies the so-called $(\Delta_2)$-condition. This fact  will permit to consider the reflexive and separable Orlicz space
	$$
	L^{F_1}(\mathbb{R}^N)=\left\{u \in L^{1}_{loc}(\mathbb{R}^N) \; ; \; \int_{\mathbb{R}^N} F_1\left(|u|\right)dx < +\infty \right\}.
	$$
	By setting 
	$$H_\varepsilon:=\left\{u \in H^1(\mathbb{R}^N);\,\int_{\mathbb{R}^{N}}V(\varepsilon x)|u|^2dx<\infty\right\}$$
	and
	$$X_\varepsilon:=H_\varepsilon\cap L^{F_1}(\mathbb{R}^N),$$
	we will prove that $E_\varepsilon$ constrained to the space $X_\varepsilon$ is a $C^1$-functional with
	$$E'_\varepsilon(u)v=\frac{1}{2}\int_{\mathbb{R}^{N}}(\nabla u\nabla v+(V(\varepsilon x)+1)uv)dx+\int_{\mathbb{R}^{N}}F'_1(u)vdx-\int_{\mathbb{R}^{N}}F'_2(u)vdx,\,\,\,\, \forall v \in X_\varepsilon.$$
	Therefore,  using the inequality
	$$\mid t\log t^2\mid\,\leq C(1+\mid t\mid^p),\,\,\,p\in(2,2^*)$$
	and standard arguments of regularity theory,
	a critical point of $E_\varepsilon$ in $X_\varepsilon$ is a classical solution of $(S_\varepsilon)$ (see, e.g., \cite[Section 1]{Alves-de Morais}). 
	
	Using this new approach, we will prove the existence and multiplicity of positive solutions for $(P_\varepsilon)$ by using the  Lusternik-Schnirelmann category theory that is a novelty for this class of problem. Inspired in the ideas presented in Alves and Ji \cite{Alves-Ji}, we also use the penalization method developed by del Pino and Felmer in \cite{del Pino-Felmer}. We would like to point out that our result improve the main result found in \cite{Alves-Ji}, because we are not assuming that $V_0$ in the condition $(V_2)$ above is the global infimum of $V$. Furthermore, we also prove that a concentration phenomena of positive solutions for $(P_\varepsilon)$, in the same line of \cite{Alves-Ji}, it holds for our problem. Adapting some ideas by Alves and Figueiredo \cite{Alves-Giovany} and Cingolani and Lazzo \cite{Cingolani-Lazzo}, we will establish the existence of multiple solutions for $(P_\varepsilon)$ by estimating the number of solutions with the Lusternik-Schnirelmann category  of the set
	$$
	M:=\{x \in \Lambda; \,V(x)=V_0\}
	$$
	in the set
	$$
	M_\delta:=\{x \in \mathbb{R}^N;\,d(x,M)\leq \delta\},
	$$
	for $\delta$ small enough.  
	
	It is very important to mention that in our approach it is crucial that the energy functional be is a $C^1$-functional, because the abstract result involving the Lusternik-Schnirelmann category that we have used in this paper deals with $C^1$-functionals (see Theorem \ref{46} below). Finally, we would like to point out that the ideas developed in the present work could be adapted for the studies found in \cite{Alves-de Morais, Alves-Ji}. Actually, our problem generalizes the cases studied in those works. The reader also can note that our approach gives some aditional information which cannot be found in \cite{Alves-de Morais, Alves-Ji}. The ideas introduced in Section 5 below, for example, can be adapted to show that the versions of Nehari set presented in \cite{Alves-Ji} and \cite{Alves-Ji2} are, under a suitable topology, a $C^1$-manifold. Note also that, as a natural consequence of our results, we get results of multiplicity involving the Lusternik-Schnirelmann category for the problems studied in \cite{Alves-de Morais, Alves-Ji}.
	
 	Our main result in this paper is the following 
 
	\begin{theorem} \label{TherFinal}
	If the conditions $(V_1)-(V_2)$ hold and $\delta>0$ is small enough, then there is $\varepsilon_3>0$,  such that, for $\varepsilon \in (0,\varepsilon_3)$, the following items are valid:
	\begin{itemize}
		\item[$i)$] $(P_\varepsilon)$ has at least $\frac{\text{cat}_{M_\delta}(M)}{2}$ positive solutions , if $\text{cat}_{M_\delta}(M)$ is an even number;
		\item[$ii)$] $(P_\varepsilon)$ has at least $\frac{\text{cat}_{M_\delta}(M)+1}{2}$ positive solutions, if $\text{cat}_{M_\delta}(M)$ is an odd number.  
	\end{itemize}
	\end{theorem}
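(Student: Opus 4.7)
The strategy I would follow adapts the del Pino--Felmer penalization method to the new $C^1$-functional setting on $X_\varepsilon$. Starting from the equivalent rescaled problem $(S_\varepsilon)$, I would introduce a modified nonlinearity $\tilde f(x,t)$ that coincides with $t\log t^2$ on $\Lambda_\varepsilon := \Lambda/\varepsilon$ and, outside $\Lambda_\varepsilon$, is replaced by a sub-Ambrosetti--Rabinowitz truncation bounded by $\alpha t$ for some $\alpha < 1$. Keeping the convex part $F_1$ unchanged, the resulting modified functional $\widetilde E_\varepsilon$ is still $C^1$ on $X_\varepsilon$ by exactly the framework outlined in the introduction. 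The crucial preliminary lemma, adapted from \cite{Alves-Ji}, is that any critical point $u$ of $\widetilde E_\varepsilon$ lying below a suitable energy threshold satisfies $|u(x)| \le \delta_0$ for $x \in \mathbb{R}^N \setminus \Lambda_\varepsilon$; this is obtained via a Moser iteration using the subcritical bound $|t \log t^2| \le C(1+|t|^p)$ for some $p \in (2,2^*)$, and it guarantees that such a critical point automatically solves the original equation $(S_\varepsilon)$.

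Next, I would analyze the autonomous limit problem
\[
-\Delta w + V_0\, w = w\log w^2 \quad \text{in } \mathbb{R}^N,
\]
whose energy $E_0$ is $C^1$ on $X_0 := H_{V_0} \cap L^{F_1}(\mathbb{R}^N)$, producing a positive radial ground state $w_0$ at level $c_{V_0} = \inf_{\mathcal{N}_0} E_0$, where $\mathcal{N}_0 = \{u \neq 0 : E_0'(u)u = 0\}$ is the Nehari manifold. Using the reflexivity and $(\Delta_2)$-property of $L^{F_1}$, one shows that the corresponding $\mathcal{N}_\varepsilon$ is a $C^1$-manifold in $X_\varepsilon$ and that $c_\varepsilon := \inf_{\mathcal{N}_\varepsilon} \widetilde E_\varepsilon \to c_{V_0}$ as $\varepsilon \to 0$. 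A splitting/concentration-compactness argument (any diverging PS-sequence translates to a nontrivial solution of the autonomous problem) then yields the Palais--Smale condition for $\widetilde E_\varepsilon|_{\mathcal{N}_\varepsilon}$ at every level $c < c_{V_0} + o_\varepsilon(1)$.

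The multiplicity is obtained through the classical two-map scheme of Benci--Cerami type. Fix a smooth radial cutoff $\eta$ supported near $0$ and define $\Phi_\varepsilon : M \to \mathcal{N}_\varepsilon$ by mapping $y \in M$ to the unique Nehari projection of $\eta(\varepsilon \cdot - y)\, w_0(\cdot - y/\varepsilon)$. Using radial symmetry and sufficient decay of $w_0$, one verifies $\widetilde E_\varepsilon(\Phi_\varepsilon(y)) \le c_{V_0} + h(\varepsilon)$ with $h(\varepsilon) \to 0$, uniformly in $y \in M$. In the other direction, a barycenter-type map
\[
\beta_\varepsilon(u) = \frac{\int_{\mathbb{R}^N} \varepsilon x\, \rho(|u(x)|)\, dx}{\int_{\mathbb{R}^N} \rho(|u(x)|)\, dx}
\]
(with $\rho$ a suitable gauge) takes the sublevel $\mathcal{N}_\varepsilon^{c_{V_0}+h(\varepsilon)}$ into $M_\delta$ for $\varepsilon$ small; a contradiction argument based on the concentration-compactness analysis shows the image lies in $M_\delta$ and that $\beta_\varepsilon \circ \Phi_\varepsilon$ is homotopic to the inclusion $M \hookrightarrow M_\delta$, yielding $\operatorname{cat}(\mathcal{N}_\varepsilon^{c_{V_0}+h(\varepsilon)}) \ge \operatorname{cat}_{M_\delta}(M)$. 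Applying the abstract Theorem \ref{46} on the $C^1$-manifold $\mathcal{N}_\varepsilon$ produces at least $\operatorname{cat}_{M_\delta}(M)$ critical points of $\widetilde E_\varepsilon$ in the sublevel, and the penalization lemma from step one identifies each as a solution of $(S_\varepsilon)$.

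Finally, since $t \log t^2$ is odd, $\widetilde E_\varepsilon$ is even and $\mathcal{N}_\varepsilon$ is symmetric under $u \mapsto -u$; critical points therefore come in pairs $\{u, -u\}$. Replacing each pair by $|u|$ (which lies in $\mathcal{N}_\varepsilon$ since $F_1, F_2$ depend only on $u^2$) and invoking a strong maximum principle via the standard transformation removing the singularity of $\log$ yields strictly positive solutions; the pairing reduces the count by a factor of two, giving exactly $\lceil \operatorname{cat}_{M_\delta}(M)/2 \rceil$ positive solutions and hence the two cases of the statement. Via the inverse change of variables $u(x) = v(x/\varepsilon)$, these produce positive solutions of $(P_\varepsilon)$. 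The step I expect to be the main obstacle is verifying the Palais--Smale condition and the $C^1$-manifold structure of $\mathcal{N}_\varepsilon$ in the nonstandard space $X_\varepsilon$: the Orlicz-space topology provides control on $\int F_1(u)\,dx$ but does not directly yield compactness for $\int F_2(u)\,dx$, and because $F_2$ incorporates the logarithm it has no uniform lower polynomial bound, so standard Vitali- and Brezis--Lieb-type compactness arguments must be rebuilt carefully in the $X_\varepsilon$-norm in order to justify the barycenter argument and the identification of concentration points.
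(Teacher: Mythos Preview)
Your outline is essentially the paper's own argument: del Pino--Felmer penalization on $X_\varepsilon$, the $C^1$ Nehari manifold $\mathcal N_\varepsilon$, the limit $c_\varepsilon\to c_{V_0}$, the two maps $\Phi_\varepsilon:M\to\mathcal N_\varepsilon$ and a barycenter $\beta$ with $\beta\circ\Phi_\varepsilon$ homotopic to the inclusion $M\hookrightarrow M_\delta$, Theorem~\ref{46}, and finally the pass back from $(\tilde S_\varepsilon)$ to $(S_\varepsilon)$ via an $L^\infty$ bound outside $\Lambda_\varepsilon$. Two technical points in your sketch deserve adjustment to match what actually works.

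First, your barycenter $\beta_\varepsilon(u)=\dfrac{\int_{\mathbb R^N}\varepsilon x\,\rho(|u|)\,dx}{\int_{\mathbb R^N}\rho(|u|)\,dx}$ may fail to be defined because $\varepsilon x$ is unbounded and you only control $u$ in $H^1\cap L^{F_1}$; the paper fixes this by replacing $\varepsilon x$ with a bounded truncation $\zeta(\varepsilon x)$ (identity on a large ball, radial projection outside), which is the standard remedy.

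Second, the final step ``replace each pair $\{u,-u\}$ by $|u|$'' is not quite right: for a sign-changing critical point $u$, the function $|u|$ lies on $\mathcal N_\varepsilon$ but is \emph{not} a critical point in general. The paper's counting argument is different and is what you actually need: for $\varepsilon$ small every critical point $v$ with $J_\varepsilon(v)\le c_0+o_\varepsilon(1)$ is sign-definite, because otherwise $v^+,v^-\in\mathcal N_\varepsilon$ and $J_\varepsilon(v)=J_\varepsilon(v^+)+J_\varepsilon(v^-)\ge 2c_\varepsilon$, contradicting the energy bound since $c_\varepsilon\to c_0$. Once sign-definiteness is established, oddness of $g_2(x,\cdot)-F_1'$ turns each negative solution into a positive one, and the factor-two loss in the count then follows exactly as you describe.
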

	
	In the papers \cite{Alves-Ji3,d'Avenia,Ji-Szulkin,Squassina-Szulkin} the reader can find results about the multiplicity of solution for some classes of logarithmic Schrödinger equations, however in those works, the existence of multiple solutions is not associated with the Lusternik-Schnirelmann category.

	The paper is organized as follows: In Section 2, we present a brief review about Orlicz spaces. In Section 3, we introduce an auxiliary problem and the main tools for the variational framework. In Section 4, we prove the existence of solution for the auxiliary problem. In Section 5, we study some properties related with the Nehari set associated with the auxiliary problem and we prove the existence of positive solution for $(S_\varepsilon)$. Finally, in Section 6, we prove our results of multiplicity involving the Lusternik-Schnirelmann category.
	\\ \\
	\textbf{Notation} From now on this paper, otherwise mentioned, we fix:
	\begin{enumerate}
		\item[$\bullet$] $\|\cdot\|_p$ denotes the usual norm of the Lebesgue space $L^p(\mathbb{R}^N)$, $p \in [1, \infty];$
		\item[$\bullet$] If $f:\Omega\rightarrow \mathbb{R}$ is a measurable function, with $\Omega \subset \mathbb{R}^N$ a measurable set, then $\displaystyle{\int_{\Omega}} f(x)\,dx$ will be denoted by $\displaystyle{\int_{\Omega}}f\,;$
		\item[$\bullet$] $o_n(1)$ denotes a real sequence with $o_n(1) \rightarrow 0;$
		\item[$\bullet$] $C(x_1,...,x_n)$ denotes a positive constant that depends of $x_1,...,x_n;$
		\item[$\bullet$] $2^*:= \displaystyle\frac{2N}{N-2}$, if $N\geq 3$ and $2^*:=\infty$ if $N=1$ or $N=2$.	
	\end{enumerate}

	\section{A short review about Orlicz spaces}
	
	In this section, we present some notions and properties related to the Orlicz spaces. For further details see \cite{Adams1, Fukagai 1, RAO}. We start by the following definition:
	
	 \begin{definition}\label{N}
	 	A continuous function $\Phi: \mathbb{R}\rightarrow [0, +\infty)$ is a N-function if:
	 	
	 		\item [(i)] $\Phi$ is convex.
	 		\item [(ii)] $\Phi(t)=0\Leftrightarrow t=0$.
	 		\item [(iii)] $\displaystyle \lim_{t\rightarrow 0} \frac{\Phi(t)}{t}=0$ and  $\displaystyle\lim_{t\rightarrow \infty} \frac{\Phi(t)}{t}=+\infty$.
	 		\item [(iv)] $\Phi$ is an even function.
	 	
	 \end{definition}
	We say that a N-function $\Phi$ verifies the $\Delta_{2}$-condition, denoted by $\Phi \in (\Delta_{2})$, if
	\begin{equation*}
	\Phi(2t) \leq k\Phi(t),\;\;\forall\; t\geq 0,
	\end{equation*}
	for some constant $k>0$. 
	
	The conjugate function $\tilde{\Phi}$ associated with $\Phi$ is given by the Legendre's transformation, more precisely,
	$$\tilde{\Phi}= \max_{t \geq 0} \{st-\Phi(t)\}\;\;\mbox{for}\;\; s \geq 0.$$
	It is possible to prove that $\tilde{\Phi}$ is also a N-function. The functions $\Phi$ and $\tilde{\Phi}$ are complementary to each other, that is, $\tilde{\tilde{\Phi}}=\Phi$.
	
	Given an open set $\Omega \subset \mathbb{R}^N$, we define the Orlicz space associated with the N-function $\Phi$ as
	$$
	L^{\Phi}(\Omega) = \left\{u \in L^{1}_{loc}(\Omega) \; ; \; \int_{\Omega} \Phi\left(\dfrac{|u|}{\lambda}\right) < +\infty, \quad \text{for some} \, \, \lambda >0 \right\}.
	$$
	The space $L^{\Phi}(\Omega)$ is a Banach space endowed with Luxemburg norm given by
	$$
	||u||_{\Phi} = \inf \left\{\lambda > 0 \; ; \; \int_{\Omega}\Phi\left(\dfrac{|u|}{\lambda}\right) \le 1 \right\}.
	$$
	
In the Orlicz spaces, we also have H\"older and Young type inequalities, namely
	$$st\leq \Phi(t)+\tilde{\Phi}(s),\,\,\,\forall s,t\geq 0,$$
	and
	$$
	\left\lvert\int_{\Omega} u v \right\lvert \le 2||u||_{\Phi}||v||_{\tilde{\Phi}},\;\;\forall\; u  \in L^{\Phi}(\Omega) \quad \mbox{and} \quad u  \in L^{\tilde{\Phi}}(\Omega).
	$$
	When $\Phi$, $\tilde{\Phi}\in (\Delta_2)$, then the space $L^{\Phi}(\Omega)$ is reflexive and separable. Furthermore, the \linebreak $\Delta_{2}$-condition implies that
	$$
	L^{\Phi}(\Omega)=\left\{u \in L^{1}_{loc}(\Omega) \; ; \; \int_{\Omega} \Phi\left(|u|\right) < +\infty \right\}
	$$
	and
	$$
	u_{n}\rightarrow u \;\;\mbox{in}\;\; L^{\Phi}(\Omega)\Leftrightarrow \int_{\Omega} \Phi(|u_{n}-u|)\rightarrow 0.
	$$
	
	 We would like to mention an important relation involving N- functions, which will be used later on. Let $\Phi$ be a N-function of $C^1$ class and $\tilde{\Phi}$ its conjugate function. Suppose that
	
\begin{equation}\label{FIN}
	1<l\leq\frac{\Phi'(t)t}{\Phi(t)}\leq m<\infty,\,\,\,\, t\neq 0,
\end{equation}
	then $\Phi$, $\tilde{\Phi} \in (\Delta_2)$. 
	
	Finally, consider
	$$\xi_0(t):=\min\{t^l,\,t^m\}\,\,\, \text{and}\,\,\, \xi_1(t):\max\{t^l,\,t^m\},\,\,\,\,\quad t\geq0.$$
	It is well known that under the condition (\ref{FIN}) the function $\Phi$ satisfies
	\begin{equation}\label{In}
	\xi_0(||u||_{\Phi})\leq \int_{\mathbb{R}^{N}}\Phi(|u|) \leq \xi_1(||u||_\Phi), \quad \forall u \in L^{\Phi}(\Omega).
	\end{equation}

	\section{Variational framework on the logarithmic equation}
	In this section we present the main tools requested to our variational approach. We begin by presenting a suitable decomposition of the nonlinearity $f(t)=t\log t^2$, which is an important step in order to overcome the lack of smoothness of energy functional associated with $(S_\varepsilon)$. Finally, taking into account the conditions $(V_1)-(V_2)$ mentioned above and motivated by \cite{Alves-Ji, del Pino-Felmer}, we introduce an auxiliary problem that is a crucial tool in our study to obtain the existence of solution for $(S_\varepsilon)$.
	\subsection{Basics on the logarithmic equation}
	Let us start by presenting a convenient decomposition of the function
	 $$F(t)= \displaystyle{\int_{0}^{t}s\log s^2 \,ds}= \frac{1}{2}t^2 \log t^2-\frac{t^2}{2},$$ 
	which has been explored in a lot of works (see, e.g., \cite{Alves-de Morais, Alves-Ji, Alves-Ji2, Ji-Szulkin, Squassina-Szulkin}).
	
	Fixed $\delta>0$ sufficiently small, we set
	\begin{equation} \label{F1}
	F_1(s):=\left\{\begin{aligned}
	&0,  \quad \,& s=0\\
	-\frac{1}{2} &s^2 \log s^2,\quad &0<|s|<\delta\\
	-\frac{1}{2} &s^2 (\log \delta^2 +3) + 2\delta|s| - \frac{\delta^2}{2},&|s|\geq \delta
	\end{aligned}
	\right.
	\end{equation}
	and
	$$
	F_2(s):=	\left\{\begin{aligned}
	&0,  \quad \,& |s|<\delta\\
	\frac{1}{2} &s^2 \log \left(\frac{s^2}{\delta^2}\right) + 2\delta|s| -\frac{3}{2}s^2-\frac{\delta^2}{2},\,&|s|\geq \delta
	\end{aligned}
	\right.
	$$
	for every $s\in \R.$ Hence, 
	
	\begin{equation}\label{3}
	F_2(s)-F_1(s)=\frac{1}{2}s^2 \log s^2,\,\,\,\, \forall s \in \mathbb{R}.
	\end{equation}
	It is well known that $F_1$ and $F_2$ verify the properties $(P_1)-(P_4)$ below:
	\\ 
	\begin{itemize}
		\item[($P_1)$] $F_1$ is an even function with $F_1'(s)s\geq 0$ and $F_1\geq 0$. Moreover $F_1 \in C^1(\mathbb{R},\mathbb{R})$ and it is also convex if $\delta \approx 0^+$.
		\item[($P_2)$] $F_2 \in C^1(\mathbb{R},\mathbb{R}) \cap C^{2}((\delta,+\infty),\R)$ and for each $p \in (2,2^*)$, there exists $C=C_p>0$ such that
		$$ |F_2'(s)|\leq C|s|^{p-1},\,\,\,\, \forall s \in \mathbb{R}.$$
		\item[$(P_3)$] $s\mapsto\frac{F'_2(s)}{s}$ is a nondecreasing function for $s>0$ and a strictly increasing function for $s>\delta$.
		\item[$(P_4)$]$\displaystyle{\lim_{s\rightarrow \infty}}\frac{F'_2(s)}{s}=\infty$.
	\end{itemize}

	An important result involving the function $F_1$  in our study is the following
	\begin{proposition}\label{4}
		The function $F_1$ is a N-function. Furthermore, it holds that $F_1$, $\tilde{F_1} \in (\Delta_2)$.
	\end{proposition}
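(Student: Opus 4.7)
The plan is to verify the four defining properties of an N-function (Definition 2.1) directly from the piecewise formula for $F_1$, and then to establish the $(\Delta_2)$-condition for both $F_1$ and its conjugate by checking the sufficient bound (3.5), which forces $F_1,\tilde F_1\in(\Delta_2)$ simultaneously. Throughout the argument the smallness assumption on $\delta$ alluded to in $(P_1)$ (concretely $\delta\leq e^{-3/2}$) is used repeatedly.

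First I would record that $F_1$ is even by construction and that the two piece-boundaries match in value and derivative: a direct computation gives $F_1'(\delta^-)=-\delta(2\log\delta+1)=F_1'(\delta^+)$, so $F_1\in C^1(\mathbb{R})$. For $F_1(s)=0\iff s=0$, note that on $0<|s|<\delta<1$ one has $-\tfrac12 s^2\log s^2>0$, and on $|s|\geq\delta$ the convexity together with $F_1'(\delta)>0$ (true as soon as $\delta<e^{-1/2}$) forces $F_1$ to be strictly increasing in $|s|$ from the positive value $F_1(\delta)=-\tfrac12\delta^2\log\delta^2>0$. For the required limits, $F_1(s)/s=-\tfrac12 s\log s^2\to 0$ as $s\to 0$, while for $s\to+\infty$ the leading term in the third piece gives $F_1(s)/s\sim -\tfrac12 s(\log\delta^2+3)\to+\infty$ provided $\log\delta^2+3<0$, i.e.\ $\delta<e^{-3/2}$. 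Convexity is obtained piecewise: on $(0,\delta)$ one computes $F_1''(s)=-(2\log s+3)\geq 0$ whenever $s\leq e^{-3/2}$, and on $(\delta,\infty)$ the function is a parabola with second derivative $-(\log\delta^2+3)\geq 0$ under the same condition on $\delta$. The $C^1$-matching then promotes this to global convexity. Together these four items establish that $F_1$ is an N-function.

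For the $(\Delta_2)$-conclusion I would apply the criterion stated in (3.5): it suffices to exhibit $1<l\leq F_1'(t)t/F_1(t)\leq m<\infty$ for all $t\neq 0$. A direct computation on $(0,\delta)$ yields
\[
\frac{F_1'(t)t}{F_1(t)}=2+\frac{2}{\log t^2},
\]
which lies in $[4/3,\,2)$ whenever $0<t\leq\delta\leq e^{-3/2}$, since then $\log t^2\leq -3$. On $[\delta,\infty)$, writing $a:=-(\log\delta^2+3)>0$, one gets
\[
\frac{F_1'(t)t}{F_1(t)}=\frac{at^2+2\delta t}{\tfrac{a}{2}t^2+2\delta t-\tfrac{\delta^2}{2}},
\]
which equals $2(a+2)/(a+3)\in(4/3,2)$ at $t=\delta$ and tends to $2$ as $t\to\infty$; a one-variable calculus check on the sign of the derivative of this ratio (the numerator reduces to $\delta(at^2-a\delta t-\delta^2)$) shows it attains a single interior minimum that is still bounded below by some $l_1>1$. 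Combining both pieces yields the desired global bound $1<l\leq F_1'(t)t/F_1(t)\leq 2$, so (3.5) gives $F_1,\tilde F_1\in(\Delta_2)$.

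I expect the main obstacle to be the transition at $t=\delta$: one must certify that the ratio $F_1'(t)t/F_1(t)$ remains uniformly bounded away from $1$ across the two branches, which is where the precise choice of the affine-quadratic continuation and the smallness of $\delta$ are genuinely used. Everything else (convexity, the limits, continuity of derivatives) is a routine piecewise check once $\delta$ is fixed small enough.
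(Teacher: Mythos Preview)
Your proposal is correct and follows essentially the same route as the paper's proof: both verify the N-function axioms directly and then establish the two-sided bound $1<l\le F_1'(t)t/F_1(t)\le 2$ piecewise on $(0,\delta)$ and $[\delta,\infty)$ to conclude $F_1,\tilde F_1\in(\Delta_2)$ via the criterion (\ref{FIN}). You supply more detail than the paper (making the smallness condition $\delta\le e^{-3/2}$ explicit and analyzing the derivative of the ratio on $[\delta,\infty)$), but the strategy is identical.
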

	\begin{proof}
		A direct computation shows that $F_1$ verifies $i)-iv)$ of the Definition \ref{N}. Now, in order to finish the proof we will show that $F_1$ satisfies the relation (\ref{FIN}). First of all, notice that
		$$
		F'_1(s):=\left\{\begin{aligned}
			&-(\log s^2+1)s,\quad &0<s<\delta,\\
			&-s(\log \delta^2 +3) + 2\delta & s\geq \delta.
		\end{aligned}
		\right.
		$$
		Next, we will analyze separately the cases $0<s<\delta$ and $s\geq \delta$.
		\\
		\textbf{Case 1}: $0<s<\delta$.
		
		In this case, 
		\begin{equation*}
		\frac{F'_1(s)s}{F_1(s)}=2+\frac{1}{\log s},
		\end{equation*}
		which implies the existence of $l_1>1$ satisfying 
		\begin{equation}\label{1}
		1<l_1\leq\frac{F'_1(s)s}{F_1(s)}\leq m_1:=\sup_{0<s<\delta}\left(2+\frac{1}{\log s}\right)\leq 2,
		\end{equation}
		for $\delta$ small enough.
		\\
		\textbf{Case 2}: $s\geq\delta$.
		
		In this case, 
		\begin{equation*}
		\frac{F'_1(s)s}{F_1(s)}=\frac{-(\log \delta^2 +3)s^2 + 2\delta s}{-\frac{1}{2}(\log \delta^2 +3)s^2 + 2\delta s-\frac{1}{2}\delta^2}.
		\end{equation*}
		From this, 
		$$\sup_{s\geq \delta}\frac{F'_1(s)s}{F_1(s)}\leq  \sup_{s\geq \delta} \left(\frac{-(\log \delta^2 +3)s^2 + 2\delta s+(2\delta s - \delta^2)}{-\frac{1}{2}(\log \delta^2 +3)s^2 + 2\delta s-\frac{1}{2}\delta^2}\right)\leq 2.$$
		Since 
		$$
		\lim_{s \to +\infty}\frac{F'_1(s)s}{F_1(s)}=2 \quad \mbox{and} \quad \frac{F'_1(s)s}{F_1(s)}> 1, \quad \forall s>0,
		$$
		one gets 
		$$
		1<\inf_{s>0}\frac{F'_1(s)s}{F_1(s)}.
		$$
		
The last inequalities ensure the existence of $l \in (1,2)$ such that
\begin{equation} \label{2}
		1<l\leq\frac{F'_1(s)s}{F_1(s)}\leq 2,\,\,\,\, \forall s>0.
\end{equation} 
	As $F_1$ is an even function, the sentence above holds for any $s\neq 0$ and the proof is finished.
	\end{proof}

In the sequel, in order to avoid the points $u \in H^1(\mathbb{R}^N)$ that verify $F_1(u)\notin L^1(\mathbb{R}^N)$, we will restrict the functional $E_\varepsilon$ given in (\ref{NOVAE}) to the space $X_\varepsilon:=H_\varepsilon\cap L^{F_1}(\mathbb{R}^N),$ which will be denoted by $I_\varepsilon$, that is, $I_\varepsilon\equiv E_\varepsilon|_{X_\varepsilon}$. Here, $L^{F_1}(\mathbb{R}^N)$ designates the Orlicz space associated with $F_1$ and let us consider on $X_\varepsilon$ the norm
$$
||\cdot||_\varepsilon:=||\cdot||_{H_\varepsilon}+||\cdot||_{F_1},
$$
where
$$
||u||_{H_\varepsilon}:=\left(\int_{\mathbb{R}^{N}}(|\nabla u|^2+(V(\varepsilon x)+1)|u|^2)\right)^{1/2},\,\,\,\, u \in H_\varepsilon
$$
and
$$
||u||_{F_1} = \inf \left\{\lambda > 0 \; ; \; \int_{\mathbb{R}^N}F_1\left(\dfrac{|u|}{\lambda}\right) \le 1 \right\}.
$$

	In view of the Proposition \ref{4},  $(X_\varepsilon,\,||\cdot||_\varepsilon)$ is a reflexive and separable Banach space. In this way, from the conditions on $F_1$ and  $V$, one has $I_\varepsilon \in C^1(X_\varepsilon, \mathbb{R})$ with
	$$I'_\varepsilon(u)v=\frac{1}{2}\int_{\mathbb{R}^{N}}(\nabla u\nabla v+(V(\varepsilon x)+1)uv+\int_{\mathbb{R}^{N}}F'_1(u)v-\int_{\mathbb{R}^{N}}F'_2(u)v,\,\,\,\, \forall v \in X_\varepsilon.$$
	Note also that the embeddings  $X_\varepsilon\hookrightarrow H^1(\mathbb{R}^N)$ and $X_\varepsilon\hookrightarrow L^{F_1}(\mathbb{R}^N)$ are continuous.
	\subsection{The auxiliary problem}
 From now on, we fix $b_0\approx 0^+$ and $a_0>\delta$ in a such way that $(\displaystyle{\inf_{\R^N}} \,V+1)>2b_0$ and  $\frac{F'_2(a_0)}{a_0}=b_0$. Using these notations, we set
		$$
		\overline{F}'_2(s):=\left\{\begin{aligned}
			&F'_2(s),\quad &0\leq s\leq a_0;\\
			&b_0s & s\geq a_0.
		\end{aligned}
		\right.
		$$
	Now, consider $t_1$, $t_2>0$ with $a_0 \in (t_1,t_2)$ and $h \in C^1([t_1,t_2])$ verifying
	\\
	$(h_1)$: $h(t)\leq \overline{F}'_2(t),\,\,\, t\in [t_1,t_2]$;\\
	$(h_2)$: $h(t_i)=\overline{F}'_2(t_i)$ and $h'(t_i)=\overline{F}''_2(t_i)$, $i\in\{1,2\}$;\\
	$(h_3)$: $\frac{h(t)}{t}$ is a nondecreasing function. 	\\
	Here, we are using the fact that $F_2 \in C^{2}((\delta,+\infty),\R).$

	Define
	$$
	\tilde{F}'_2(s):=\left\{\begin{aligned}
		&\overline{F}'_2(s),\quad & t \notin [t_1,t_2];\\
		&h(t), & t\in [t_1,t_2].
	\end{aligned}
	\right.
	$$
	Denote by $\chi_\Lambda$ the characteristic function of the set $\Lambda$ and let $g_2:\mathbb{R}^N\times[0,\infty)\longrightarrow\mathbb{R}$ given by
	$$g_2(x,t):=\chi_\Lambda(x) F'_2(t)+(1-\chi_\Lambda(x))\tilde{F}'_2(t).$$
	On account that $F'_2$ is an odd function, we can extend the definition of $g_2$ to $\mathbb{R}^N\times\mathbb{R}$ by setting $g_2(x,t)=-g_2(x,-t)$, for each $t\leq 0$ and $x \in \mathbb{R}^N$.
	
	Hereafter, we will study the existence of solution for the following auxiliary problem
		\begin{equation}
		\left\{\begin{aligned}
		-&\Delta u + (V(\varepsilon x)+1)u =g_2(\varepsilon x,u)-F'_1(u),\;\;\mbox{in}\;\;\mathbb{R}^{N},\nonumber \\
		&u \in H^{1}(\mathbb{R}^{N})\cap L^{F_1}(\mathbb{R}^N).
		\end{aligned}
		\right.\leqno{(\tilde{S}_\varepsilon)}
		\end{equation}
Setting
	$$\Lambda_\varepsilon:=\{x \in \mathbb{R}^N;\, \varepsilon x \in \Lambda\},$$
we see that if $u$  is a positive solution of $(\tilde{S}_\varepsilon)$ satisfying 
	\begin{equation}\label{05}
	0<u(x)<t_1,\,\,\,\, \forall x \in (\mathbb{R}^N-\Lambda_\varepsilon),
	\end{equation}
	then $u$ is a solution of $(S_\varepsilon)$. Have this in mind, we will study the existence of positive solutions for $(S_\varepsilon)$ by looking for solutions of $(\tilde{S}_\varepsilon)$ that satisfy (\ref{05}).
	
	From the definition of $g_2$, it is possible to prove the following properties:
	\\
	$(A_1):$ $
	\left\{\begin{aligned}
	& i):\, g_2(x,t)\leq b_0|t|+C|t|^{p-1},\,\,\,\,\,t\geq 0,\,x\in \mathbb{R}^N;\\
	& ii):\, g_2(x,t)\leq F'_2(t), \,\,\,\,\, x \in \mathbb{R}^N;\\
	& iii):\, g_2(x,t)\leq b_0t,\,\,\,\,\,t\geq 0,\,x\in\left(\mathbb{R}^N-\Lambda\right);\\
	& iv):\,  \frac{1}{2}|t|^2+[F_2(t)-\frac{1}{2}F'_2(t)t+\frac{1}{2}G'_2(\varepsilon x,t)t-G_2(\varepsilon x,t)]\geq 0,\,\,\,\forall t \in \mathbb{R},\, x\in\mathbb{R}^N.
	\end{aligned}
	\right.
	$
		
 Associated with $(\tilde{S}_\varepsilon)$ we have the following functional
$$
J_\varepsilon(u):=\frac{1}{2}\int_{\mathbb{R}^{N}}(|\nabla u|^2+(V(\varepsilon x)+1)|u|^2)+\int_{\mathbb{R}^N}F_1(u)-\int_{\mathbb{R}^{N}}G_2(\varepsilon x,u),\,\,\,\forall u \in X_\varepsilon,
$$
where $G_2(x,t):=\displaystyle{\int_{0}^{t}}g_2(x,s)\,ds$. The conditions on $g_2$ ensures that $J_\varepsilon \in C^1(X_\varepsilon,\mathbb{R})$, and thereby, critical points of $J_\varepsilon$ are weak solutions of $(\tilde{S}_\varepsilon)$.

	\section{Existence of solution for the auxiliary problem}
	
	In this section we will establish the existence of solution for $(\tilde{S}_\varepsilon)$. We start by showing that $J_\varepsilon$ satisfies the geometric configuration of the Mountain Pass Theorem (see \cite{AMBRab}).
	
	\begin{lemma}\label{16}
		Given $\varepsilon>0$, the functional $J_\varepsilon$ satisfies
		\begin{itemize}
			\item [$i)$] There exist $r$, $\rho>0$ such that $J_\varepsilon(u)\geq \rho$ for any $u \in X_\varepsilon$, $||u||_\varepsilon=r$. 
			\item [$ii)$] There exits $v\in X_\varepsilon$ with $||v||_\varepsilon>r$ satisfying $J_\varepsilon(v)<0=J_\varepsilon(0)$.
		\end{itemize}
	\end{lemma}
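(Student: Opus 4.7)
My plan is to verify each part separately, exploiting the decomposition $J_\varepsilon = \frac12\|\cdot\|_{H_\varepsilon}^2 + \int F_1(\cdot) - \int G_2(\varepsilon x,\cdot)$ together with the Orlicz estimates from Proposition~\ref{4}.

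For item $i)$ I would first bound the nonlocal term using $(A_1)(i)$. Integrating $g_2(x,t)\le b_0|t|+C|t|^{p-1}$ and using oddness of $g_2$ gives $G_2(\varepsilon x,u)\le \tfrac{b_0}{2}|u|^2+\tfrac{C}{p}|u|^p$, so
\begin{equation*}
J_\varepsilon(u)\ge \tfrac12\|u\|_{H_\varepsilon}^2+\int_{\mathbb{R}^N}F_1(u)-\tfrac{b_0}{2}\|u\|_2^2-\tfrac{C}{p}\|u\|_p^p.
\end{equation*}
The choice $b_0$ with $\inf(V+1)>2b_0$ is exactly what lets me absorb $\tfrac{b_0}{2}\|u\|_2^2$: one checks $V(\varepsilon x)+1-b_0>\tfrac12(V(\varepsilon x)+1)$, hence $\tfrac12\|u\|_{H_\varepsilon}^2-\tfrac{b_0}{2}\|u\|_2^2\ge \tfrac14\|u\|_{H_\varepsilon}^2$. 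Using $F_1\ge 0$ from $(P_1)$ and the Sobolev embedding $H_\varepsilon\hookrightarrow L^p(\mathbb{R}^N)$ for $p\in(2,2^*)$, together with $\|u\|_{H_\varepsilon}\le\|u\|_\varepsilon$, I arrive at
\begin{equation*}
J_\varepsilon(u)\ge \tfrac14\|u\|_{H_\varepsilon}^2+\int_{\mathbb{R}^N}F_1(u)-C'\|u\|_{H_\varepsilon}^p.
\end{equation*}

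Because $\|\cdot\|_\varepsilon=\|\cdot\|_{H_\varepsilon}+\|\cdot\|_{F_1}$, for $\|u\|_\varepsilon=r$ there are two cases. If $\|u\|_{H_\varepsilon}\ge r/2$, then $J_\varepsilon(u)\ge \tfrac1{16}r^2-C'r^p$, which is bounded below by $\rho>0$ once $r$ is small since $p>2$. If instead $\|u\|_{F_1}\ge r/2$, I invoke the Orlicz estimate~(\ref{In}) from Proposition~\ref{4}: for $r<2$ one has $\int F_1(|u|)\ge \xi_0(\|u\|_{F_1})=\|u\|_{F_1}^m$ with $m\in(1,2]$, while $\|u\|_{H_\varepsilon}\le r$ still gives $C'\|u\|_{H_\varepsilon}^p\le C'r^p$; hence $J_\varepsilon(u)\ge (r/2)^m-C'r^p$, which is again positive for small $r$ because $m<p$. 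Taking $r$ small enough to fit both cases and setting $\rho$ to be the smaller of the two lower bounds finishes $i)$.

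For item $ii)$ I would fix any $\phi\in C_c^\infty(\Lambda_\varepsilon)$ with $\phi\ge 0$ and $\phi\not\equiv 0$, and study $J_\varepsilon(t\phi)$ as $t\to\infty$. Since $\mathrm{supp}(\phi)\subset\Lambda_\varepsilon$, one has $g_2(\varepsilon x,\cdot)=F'_2(\cdot)$ on $\mathrm{supp}(\phi)$, so $G_2(\varepsilon x,t\phi)=F_2(t\phi)$ everywhere on the support. Using the identity~(\ref{3}),
\begin{equation*}
J_\varepsilon(t\phi)=\tfrac{t^2}{2}\|\phi\|_{H_\varepsilon}^2-\tfrac{1}{2}\int_{\mathbb{R}^N}(t\phi)^2\log(t\phi)^2=\tfrac{t^2}{2}\|\phi\|_{H_\varepsilon}^2-\tfrac{t^2}{2}\int_{\mathbb{R}^N}\phi^2\log\phi^2-\tfrac{t^2\log t^2}{2}\|\phi\|_2^2.
\end{equation*}
The term $-\tfrac{t^2\log t^2}{2}\|\phi\|_2^2$ dominates as $t\to\infty$, giving $J_\varepsilon(t\phi)\to-\infty$, while $\|t\phi\|_\varepsilon\to\infty$. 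Choosing $v=t\phi$ with $t$ sufficiently large produces the required mountain.

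The only genuinely delicate point is the two-component nature of $\|\cdot\|_\varepsilon$ in part $i)$: a naive estimate in $\|u\|_{H_\varepsilon}$ alone fails when the $L^{F_1}$ component dominates, so the Orlicz comparison~(\ref{In}) with exponent $m\le 2$ is essential to ensure that the Orlicz part also gives a superlinear (in fact $m$-homogeneous lower, $p$-homogeneous upper) contribution that beats the $p$-th power correction. Everything else is routine.
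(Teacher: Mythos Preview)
Your argument is correct. Both items go through, and in fact your treatment of $ii)$ is cleaner than the paper's for the purpose of this lemma alone. Let me note the differences.

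For $i)$ the paper takes a slightly more direct path: it uses $(A_1)$--$ii)$, i.e.\ $G_2(\varepsilon x,t)\le F_2(t)$, together with $(P_2)$ to control the negative term by $D\|u\|_\varepsilon^p$, and then applies the Orlicz inequality~(\ref{In}) with $m=2$ globally rather than through a case split, obtaining
\[
J_\varepsilon(u)\ \ge\ \tfrac12\|u\|_{H_\varepsilon}^2+\|u\|_{F_1}^2-D\|u\|_\varepsilon^p\ \ge\ C\|u\|_\varepsilon^2-D\|u\|_\varepsilon^p,
\]
the last step using $a^2+b^2\ge \tfrac12(a+b)^2$. This avoids separating into the regimes $\|u\|_{H_\varepsilon}\ge r/2$ versus $\|u\|_{F_1}\ge r/2$, but your case analysis is of course equally valid and uses the same Orlicz comparison.

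For $ii)$ the paper establishes the stronger fact that $J_\varepsilon(tu)\to-\infty$ for \emph{every} $u\in O_\varepsilon=\{u\in X_\varepsilon:\ |\mathrm{supp}(|u|)\cap\Lambda_\varepsilon|>0\}$, which forces it to handle contributions from $\Lambda_\varepsilon^c$ (estimates on $F_1$, $\tilde F_2$, the measure of $[t|u|>t_1]$, etc.). By choosing $\phi\in C_c^\infty(\Lambda_\varepsilon)$ you sidestep all of that and reduce to the one-line identity $J_\varepsilon(t\phi)=\tfrac{t^2}{2}\|\phi\|_{H_\varepsilon}^2-\tfrac12\int(t\phi)^2\log(t\phi)^2$. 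This is perfectly sufficient for Lemma~\ref{16} as stated; be aware, however, that the paper relies on the general $O_\varepsilon$ version later (Lemma~\ref{27} and Lemma~\ref{29}--$ii)$), so that work is not wasted there.
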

	\begin{proof}
		$i)$: From $(A_1)$, one has that $G_2(\varepsilon x,t)\leq F_2(t)$, and so,
		$$J_\varepsilon(u)\geq \frac{1}{2}||u||^2_{H_\varepsilon}+\int_{\mathbb{R}^{N}}F_1(u)-\int_{\mathbb{R}^{N}}F_2(u).
		$$
	Gathering (\ref{In}) with (\ref{2}) (note that $m$ can be chosen equal to 2) and using $(P_2)$, there is $r\approx0^+$ such that 
		$$
		J_\varepsilon(u)\geq \frac{1}{2}||u||^2_{H_\varepsilon}+||u||_{F_1}^2-D||u||_\varepsilon^p \geq C||u||^2_\varepsilon-D||u||_\varepsilon^p,
		$$
	 for some $C$, $D>0$. The last inequality gives the desired condition, because $p>2$.
		\\
		$ii)$: Fix $u \in O_\varepsilon:=\{u \in X_\varepsilon;\, |\text{supp}(|u|)\cap\Lambda_\varepsilon|>0\}.$ Note that, for each $x\in\mathbb{R}^N$ we can write
		$$
		F_1(t)=\chi_{\Lambda_\varepsilon}(x) F_1(t)+(1-\chi_{\Lambda_\varepsilon}(x))F_1(t).
		$$
		Therefore, from the definition of $g_2$, 
		$$\begin{aligned}
		J_\varepsilon(tu)\leq\frac{t^2}{2}||u||_{H_\varepsilon}^2-\frac{1}{2}\int_{\mathbb{R}^N}\chi_{\Lambda_\varepsilon}|tu|^2\log |tu|^2&+\frac{1}{2}\int_{[t|u|\leq t_1]}(1-\chi_{\Lambda_\varepsilon})|tu|^2\log |tu|^2+\\
		&+\int_{[t|u|> t_1]}(1-\chi_{\Lambda_\varepsilon})[F_1(tu)-\tilde{F}_2(tu)].
		\end{aligned}$$
	Recalling that $X_\varepsilon\hookrightarrow L^{2}(\mathbb{R}^N)$, there is $C>0$ independent of $t$ such that 
		$$ \int_{[t|u|> t_1]}|tu|^2\leq C,$$
		and so, 
		$$|[t|u|> t_1]|\leq \frac{C}{t_1^2}t^2=:C_1t^2.$$
		By the definition of $F_1$, 
		$$F_1(t)\leq At^2+B,\,\,\,\, t\geq 0,$$
		with $A$, $B>0$. Then, 
		$$\int_{[t|u|> t_1]}(1-\chi_{\Lambda_\varepsilon})F_1(t|u|)\leq Dt^2,\,\,\,$$
		for a convenient $D>0$.	Since $\tilde{F}_2\geq0$, we find
		$$
		\begin{aligned}
		J_\varepsilon(tu)\leq t^2[\,\,\frac{1}{2}||u||_{H_\varepsilon}^2-\int_{\mathbb{R}^N}\chi_{\Lambda_\varepsilon}|u|^2\log|u|^2 &-\log t\left(\int_{\mathbb{R}^N}\chi_{\Lambda_\varepsilon}|u|^2+\int_{[t|u|\leq t_1]}(\chi_{\Lambda_\varepsilon}-1)|u|^2\right)\\
		&+\int_{[t|u|\leq t_1]}(1-\chi_{\Lambda_\varepsilon})|u|^2\log|u|^2+D\,\,].
		\end{aligned}$$
		By the Lebesgue Dominated Convergence Theorem, we have
		$$
		\int_{[t|u|\leq t_1]}(\chi_{\Lambda_\varepsilon}-1)|u|^2\longrightarrow 0,\,\,\,\text{as}\,\,\, t\rightarrow +\infty.
		$$
		Note also that, since $u\in O_\varepsilon$ and $u \in L^{F_1}(\mathbb{R})$, it holds
		$$\int_{\mathbb{R}^N}\chi_{\Lambda_\varepsilon}|u|^2>0$$
		and
		$$\frac{1}{2}\int_{[t|u|\leq t_1]}(1-\chi_{\Lambda_\varepsilon})|u|^2\log|u|^2	\leq\int_{\mathbb{R}^{N}}F_2(u)\,dx<\infty.$$
		Combining all of the above information we derive that
		$$J_\varepsilon(tu)\rightarrow -\infty,\,\,\,\text{as}\,\,\, t\rightarrow \infty,$$
		and the proof is finished by taking $v=tu$ with $t$ large enough.	
	\end{proof}

	For the next lemma, we have adapted the reasoning employed in \cite[Lemma 3.1]{Alves-Ji2}. However, taking into account that in our case the functional $J_\varepsilon$ has on $X_\varepsilon$ a different topology of $H^{1}(\mathbb{R}^N)$, it was necessary to develop new estimates that are not found in \cite{Alves-Ji2}.   
	
In the sequel, we will need of the following logarithmic inequality (see \cite[pg 153]{del Pino-Dolbeaut})
	\begin{equation*}
	\int_{\mathbb{R}^{N}}|u|^2\log\left(\frac{|u|}{||u||_2}\right)\leq C||u||_2\log\left(\frac{||u||_{2^*}}{||u||_2}\right),\,\,\,\, \forall u \in L^2(\mathbb{R}^N)\cap L^{2^*}(\mathbb{R}^N),
	\end{equation*}
	for some positive constant $C$. As an immediate consequence,  
\begin{equation}\label{LogIn}
	\int_{\Lambda_\varepsilon}|u|^2\log\left(\frac{|u|}{||u||_{L^2(\Lambda_\varepsilon)}}\right)\leq C||u||_{L^2(\Lambda_\varepsilon)}\log\left(\frac{||u||_{L^{2^*}(\Lambda_\varepsilon)}}{||u||_{L^{2}(\Lambda_\varepsilon)}}\right),\,\,\,\, \forall u \in L^2(\Lambda_\varepsilon)\cap L^{2^*}(\Lambda_\varepsilon).
\end{equation}
	\begin{lemma}\label{9}
		Let $(v_n)$ be a $(PS)_c$ sequence for $J_\varepsilon$. Then, the sequence $(v_n)$ is bounded in $X_\varepsilon$. 
	\end{lemma}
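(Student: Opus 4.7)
The strategy is to combine the identity $J_\varepsilon(v_n) - \tfrac12 J'_\varepsilon(v_n) v_n = c + o_n(1)(1+\|v_n\|_\varepsilon)$ with the direct relation $J'_\varepsilon(v_n) v_n = o_n(1)\|v_n\|_\varepsilon$, and to control the two components $\|v_n\|_{H_\varepsilon}$ and $\|v_n\|_{F_1}$ of the norm $\|\cdot\|_\varepsilon$ separately. First I would expand
$$J_\varepsilon(v_n) - \tfrac12 J'_\varepsilon(v_n) v_n = \int_{\R^N}\bigl[F_1(v_n) - \tfrac12 F'_1(v_n) v_n\bigr] + \int_{\R^N}\bigl[\tfrac12 g_2(\varepsilon x, v_n) v_n - G_2(\varepsilon x, v_n)\bigr]$$
and check that both integrands are nonnegative: the first by the upper bound $F'_1(s) s \leq 2 F_1(s)$ from \eqref{2}, and the second because the nondecreasing monotonicity of $F'_2(s)/s$ (by $(P_3)$) and of $\tilde F'_2(s)/s$ (ensured by $(h_3)$ together with the matching conditions $(h_2)$) makes $g_2(\varepsilon x, s)/s$ nondecreasing in $s>0$, which upon integration yields $G_2(\varepsilon x, t) \leq \tfrac12 t\, g_2(\varepsilon x, t)$.

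On $\Lambda_\varepsilon$ the second integrand simplifies by a direct computation based on \eqref{F1} to $\tfrac12 (|v_n|-\delta)_+^2$; combined with $v_n^2 \leq 2(|v_n|-\delta)_+^2 + 2\delta^2$ and $|\Lambda_\varepsilon| = |\Lambda|/\varepsilon^N < \infty$, this yields
$$\|v_n\|_{L^2(\Lambda_\varepsilon)}^2 \leq C_\varepsilon + o_n(1)\|v_n\|_\varepsilon.$$
Next, I would use $J'_\varepsilon(v_n) v_n = o_n(1)\|v_n\|_\varepsilon$ together with $(A_1)(iii)$ and the standing requirement $V(\varepsilon x) + 1 > 2 b_0$ to absorb $b_0 \int_{\R^N \setminus \Lambda_\varepsilon} v_n^2$ into $\tfrac12 \|v_n\|_{H_\varepsilon}^2$, obtaining
$$\tfrac12 \|v_n\|_{H_\varepsilon}^2 + \int_{\R^N} F'_1(v_n) v_n \leq \int_{\Lambda_\varepsilon} F'_2(v_n) v_n + o_n(1)\|v_n\|_\varepsilon.$$
Fixing $p \in (2, 2^*)$ close to $2$, the growth estimate $(P_2)$, the Hölder interpolation $\|v_n\|_{L^p(\Lambda_\varepsilon)}^p \leq \|v_n\|_{L^2(\Lambda_\varepsilon)}^{\theta p}\|v_n\|_{L^{2^*}(\Lambda_\varepsilon)}^{(1-\theta) p}$, Sobolev's embedding $\|v_n\|_{L^{2^*}} \leq C\|v_n\|_{H_\varepsilon}$, and the previous $L^2$-bound then dominate the right-hand side by $C(1+\|v_n\|_\varepsilon)^{\alpha}\|v_n\|_{H_\varepsilon}^{\beta}$, with $\alpha = \theta p/2$ arbitrarily close to $1$ and $\beta = (1-\theta) p$ arbitrarily close to $0$.

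For the Luxemburg norm, I combine \eqref{In} with the lower bound $l F_1(s) \leq F'_1(s) s$ from \eqref{2}: whenever $\|v_n\|_{F_1}\geq 1$,
$$\|v_n\|_{F_1}^{l} \leq \xi_0(\|v_n\|_{F_1}) \leq \int_{\R^N} F_1(v_n) \leq \tfrac1l \int_{\R^N} F'_1(v_n) v_n,$$
and the last integral is dominated by the same right-hand side as before. This yields the coupled system
$$\|v_n\|_{H_\varepsilon}^2 \leq C(1+\|v_n\|_\varepsilon)^{\alpha}\|v_n\|_{H_\varepsilon}^{\beta} + o_n(1)\|v_n\|_\varepsilon, \qquad \|v_n\|_{F_1}^{l} \leq C(1+\|v_n\|_\varepsilon)^{\alpha}\|v_n\|_{H_\varepsilon}^{\beta} + o_n(1)\|v_n\|_\varepsilon.$$
For $p$ sufficiently close to $2$, both bootstrap exponents $\alpha/(2-\beta)$ and $2\alpha/(l(2-\beta))$ are strictly less than $1$ (the latter using $l > 1$ from Proposition~\ref{4}); substituting the first inequality into the second and writing $\|v_n\|_\varepsilon = \|v_n\|_{H_\varepsilon} + \|v_n\|_{F_1}$ then contradicts the assumption $\|v_n\|_\varepsilon \to \infty$.

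The main obstacle, as the paper emphasizes, is the Luxemburg-norm estimate: while the $H_\varepsilon$-bound parallels \cite{Alves-Ji2}, controlling $\|v_n\|_{F_1}$ is genuinely new and requires exploiting both directions of the two-sided inequality \eqref{2}, the $\Delta_2$-equivalence $\xi_0(\|\cdot\|_{F_1}) \leq \int F_1(\cdot)$ from \eqref{In}, and a careful choice of $p$ so that the final bootstrap closes.
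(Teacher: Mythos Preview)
Your argument is correct, but it follows a genuinely different route from the paper's proof.

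The paper, after obtaining the same preliminary bound $\int_{\Lambda_\varepsilon}|v_n|^2 \le C + o_n(1)\|v_n\|_\varepsilon$, invokes the \emph{logarithmic Sobolev inequality} of del~Pino--Dolbeault \eqref{LogIn} to control $\int_{\Lambda_\varepsilon}|v_n|^2\log|v_n|^2$ by a quantity of order $\|v_n\|_\varepsilon^{1+r}$ with $r<l-1$, and then plugs this into the \emph{energy identity} $J_\varepsilon(v_n)=\tfrac12\|v_n\|_{H_\varepsilon}^2+\int_{\Lambda_\varepsilon^c}F_1(v_n)-\tfrac12\int_{\Lambda_\varepsilon}|v_n|^2\log|v_n|^2-\int_{\Lambda_\varepsilon^c}G_2$ (rather than the derivative) to bound $\|v_n\|_{H_\varepsilon}^2+\int_{\mathbb R^N}F_1(v_n)$ at once. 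A short case analysis in $\|v_n\|_{F_1}\lessgtr 1$ and $\|v_n\|_{H_\varepsilon}\lessgtr 1$ then closes the argument.

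You instead work with $J'_\varepsilon(v_n)v_n=o_n(1)\|v_n\|_\varepsilon$, absorb the $\Lambda_\varepsilon^c$ contribution via $(A_1)(iii)$, and replace the logarithmic inequality by ordinary H\"older interpolation $\|v_n\|_{L^p(\Lambda_\varepsilon)}^p\le\|v_n\|_{L^2(\Lambda_\varepsilon)}^{\theta p}\|v_n\|_{L^{2^*}}^{(1-\theta)p}$ plus Sobolev. This is more elementary---it avoids the del~Pino--Dolbeault inequality altogether---and the bootstrap you set up (with exponents $\alpha/(2-\beta)\to\tfrac12$ and $2\alpha/(l(2-\beta))\to 1/l$ as $p\downarrow 2$) does close. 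The paper's route, on the other hand, is tailored to the logarithmic structure and produces the estimate on $\|v_n\|_{H_\varepsilon}^2$ and $\int F_1(v_n)$ simultaneously from the energy level, which is convenient when the argument is reused later (Lemma~\ref{38}) with $J_{\varepsilon_n}(u_n)\le M_1$ but without direct access to $J'_{\varepsilon_n}(u_n)u_n$ in a uniform dual norm.
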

	\begin{proof}
		Let $(v_n)$ be a $(PS)_c$ sequence for $J_\varepsilon$. Then, 
		\begin{equation}\label{5}
		J_\varepsilon(v_n)-\frac{1}{2}J'_\varepsilon(v_n)v_n\leq (c+1)+o_n(1)||v_n||_\varepsilon,
		\end{equation}
		for large $n$.
		
		On the other hand, observe that
		\begin{equation}\label{6}
		\begin{aligned}
		J_\varepsilon(v_n)-\frac{1}{2}J'_\varepsilon(v_n)v_n&=\int_{\mathbb{R}^{N}}(F_1(v_n)-\frac{1}{2}F'_1(v_n)v_n)+\int_{\mathbb{R}^N}(\frac{1}{2}G'_2(\varepsilon x,v_n)v_n-G_2(\varepsilon x,v_n))=\\
		&=\frac{1}{2}\int_{\mathbb{R}^N}|v_n|^2+\int_{\mathbb{R}^{N}}[F_2(v_n)-\frac{1}{2}F'_2(v_n)v_n+\frac{1}{2}G'_2(\varepsilon x,v_n)v_n-G_2(\varepsilon x,v_n)],
		\end{aligned}
		\end{equation}
		because
		$$
		\int_{\mathbb{R}^{N}}[(F_1(v_n)-\frac{1}{2}F'_1(v_n)v_n)+(\frac{1}{2}F'_2(v_n)v_n-F_2(v_n))]=\frac{1}{2}\int_{\mathbb{R}^N}|v_n|^2.
		$$
		Consequently, 
		$$\begin{aligned}
		J_\varepsilon(v_n)-\frac{1}{2}J'_\varepsilon(v_n)v_n&\geq\frac{1}{2}\int_{\Lambda_\varepsilon}|v_n|^2+\int_{\{\Lambda_\varepsilon^c\cap[|v_n|> t_1]\}}(\frac{1}{2}|v_n|^2+F_2(v_n)-\frac{1}{2}F'_2(v_n)v_n)\,\,+\\
		&+\int_{\{\Lambda_\varepsilon^c\cap[|v_n|> t_1]\}}(\frac{1}{2}G'_2(\varepsilon x,v_n)v_n-G_2(\varepsilon x,v_n)).
		\end{aligned}
		$$
		From $(A_1)-iv)$,
		
		$$J_\varepsilon(v_n)-\frac{1}{2}J'_\varepsilon(v_n)v_n\geq\frac{1}{2}\int_{\Lambda_\varepsilon}|v_n|^2$$
		and so, from (\ref{5}),
		\begin{equation}\label{7}
		(c+1)+o_n(1)||v_n||_\varepsilon\geq\frac{1}{2}\int_{\Lambda_\varepsilon}|v_n|^2.
		\end{equation}
		Recall that there are constants $A$, $B>0$ such that
		$$F_1(t)\leq A|t|^2+B, \,\,\,\forall t\in \mathbb{R}.$$
		This together with (\ref{7}) leads to 
		\begin{equation}\label{22}
		\int_{\Lambda_\varepsilon}F_1(v_n)\leq C_\varepsilon +||v_n||_\varepsilon,
		\end{equation}
		for some $C_\varepsilon>0$. Thanks to (\ref{LogIn}), 	
		$$\begin{aligned}
		\frac{1}{2}&\int_{\Lambda_\varepsilon}|v_n|^2\log |v_n|^2\leq C||v_n||_{L^2(\Lambda_\varepsilon)}\log\left(\frac{||v_n||_{L^{2^*}(\Lambda_\varepsilon)}}{||v_n||_{L^2(\Lambda_\varepsilon)}}\right)+||v_n||_{L^2(\Lambda_\varepsilon)}^2\log (||v_n||_{L^2(\Lambda_\varepsilon)})=\\
		&=(||v_n||_{L^2(\Lambda_\varepsilon)}^2-C||v_n||_{L^2(\Lambda_\varepsilon)})\log (||v_n||_{L^2(\Lambda_\varepsilon)})+C||v_n||_{L^2(\Lambda_\varepsilon)}\log\left(||v_n||_{L^{2^*}(\Lambda_\varepsilon)}\right).
		\end{aligned}
		$$
	that combines with the embedding $X_\varepsilon \hookrightarrow H_\varepsilon$ to give
		$$\int_{\Lambda_\varepsilon}|v_n|^2\log |v_n|^2\leq (2||v_n||_{L^2(\Lambda_\varepsilon)}^2-2C||v_n||_{L^2(\Lambda_\varepsilon)})\log (||v_n||_{L^2(\Lambda_\varepsilon)})+\tilde{C}||v_n||_\varepsilon\left|\log(\tilde{C}||v_n||_\varepsilon)\right|,$$
		for some convenient $\tilde{C}>0$ independent of $\varepsilon$. In order to get the last inequality, we have explored the fact that the function $t\mapsto \log t$, $t>0$, is increasing.
Now, using the fact that  given $r\in(0,1)$ there is $A>0$ satisfying  
		$$|t \log t|\leq A(1+|t|^{1+r}),\,\,\, t\geq0,$$
	we obtain, by gathering this inequality with (\ref{7}), the inequalities below  
	$$
	||v_n||_{L^2(\Lambda_\varepsilon)}\log (||v_n||_{L^2(\Lambda_\varepsilon)})\leq A(1+||v_n||_{L^2(\Lambda_\varepsilon)}^{1+r})$$
	and
	$$
	||v_n||_{L^2(\Lambda_\varepsilon)}^2\log (||v_n||_{L^2(\Lambda_\varepsilon)}^2)\leq A(1+(||v_n||_{L^2(\Lambda_\varepsilon)}^2)^{1+r})\leq \tilde{A}(1+||v_n||_{L^2(\Lambda_\varepsilon)})^{1+r}).
	$$
	From these information, modifying $A$ if necessary, we arrive at
		\begin{equation}\label{8}
		\int_{\Lambda_\varepsilon}|v_n|^2\log |v_n|^2\leq A(1+||v_n||_\varepsilon^{1+r}).
		\end{equation}
As $(v_n)$ is a $(PS)_c$ sequence for $J_\varepsilon$, 
		$$(c+1)\geq J_\varepsilon(v_n)=\frac{1}{2}||v_n||_{H_\varepsilon}^2+\int_{\Lambda_\varepsilon^c}F_1(v_n)-\int_{\Lambda_\varepsilon}|v_n|^2\log |v_n|^2-\int_{\Lambda_\varepsilon^c}G_2(\varepsilon x, v_n)$$
for large $n$. 	From $(A_1)$, 
		$$G_2(\varepsilon x, t) \leq \frac{b_0}{2}t^2,\,\,\,\, \forall x \in \Lambda_\varepsilon^c,$$
then 
$$
(c+1)+A(1+||v_n||_\varepsilon^{1+r})\geq C||v_n||_{H_\varepsilon}^2+\int_{\Lambda_\varepsilon^c}F_1(v_n),
$$ 
for some $C>0$,	and so, by (\ref{22}),
		\begin{equation}\label{23}
		D_\varepsilon+||v_n||_\varepsilon+A(1+||v_n||_\varepsilon^{1+r})\geq \tilde{C}\left(||v_n||_{H_\varepsilon}^2+\int_{\mathbb{R}^N}F_1(v_n)\right),
		\end{equation}
		where $D_\varepsilon:=(C_\varepsilon+c+1)>0$ and $\tilde{C}:=\min\{C,1\}$.  From now on in this proof, we fix $r\in (0,1)$ so that $1+r<l$, where $l$ is the number obtained in (\ref{2}). 
		
		Suppose that  $||v_n||_{F_1}\leq1$. Employing (\ref{In}) in (\ref{23}), and modifying $\tilde{C}$ if necessary, one gets
	    \begin{equation}\label{24}
	    D_\varepsilon+||v_n||_\varepsilon+A(1+||v_n||_\varepsilon^{1+r})\geq \tilde{C}(||v_n||_{H_\varepsilon}+||v_n||_{F_1})^2=\tilde{C}||v_n||_\varepsilon^2.
	    \end{equation}
	    Otherwise, if $||v_n||_{F_1}>1$, we have two possibilities: $||v_n||_{H_\varepsilon}>1$ or $||v_n||_{H_\varepsilon}\leq 1$. When $||v_n||_{H_\varepsilon}>1$, in the same way of the preceding case we obtain
	    \begin{equation}\label{25}
	    D_\varepsilon+||v_n||_\varepsilon+A(1+||v_n||_\varepsilon^{1+r})\geq C_l||v_n||_\varepsilon^l.
	    \end{equation} 	
	    If it occurs $||v_n||_{F_1}>1$ and $||v_n||_{H_\varepsilon}\leq 1$, using the definition $||\cdot||_\varepsilon$ in (\ref{23}), we find
	    \begin{equation}\label{26}
	   	\tilde{D}_\varepsilon+||v_n||_{F_1}+C_r||v_n||_{F_1}^{1+r}\geq\tilde{C}||v_n||_{F_1}^l.
	    \end{equation}
	    The proof is completed by combining (\ref{24})-(\ref{26}).    
			
	\end{proof}	
	
	
	Next, we present an important property of the $(PS)$ sequences that is crucial in order to prove that $J_\varepsilon$ satisfies the $(PS)$ condition in the space $X_\varepsilon$.
	
	\begin{lemma}\label{11}
		Let $(v_n)$ be a $(PS)_c$ sequence for $J_\varepsilon$. Then, given $\tau>0$ there is $R>0$ such that
		$$\limsup_{n\rightarrow \infty}\int_{B^c_R(0)}(|\nabla v_n|^2+(V(\varepsilon x)+1)|v_n|^2)<\tau.$$
	\end{lemma}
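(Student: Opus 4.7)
My plan is to use a standard cutoff argument tailored to the space $X_\varepsilon$. Fix $\eta_R\in C^\infty(\mathbb{R}^N)$ with $0\le\eta_R\le 1$, $\eta_R\equiv 0$ on $B_{R/2}(0)$, $\eta_R\equiv 1$ on $B_R^c(0)$, and $|\nabla\eta_R|\le C/R$. Since $\Lambda$ is bounded, $\Lambda_\varepsilon$ is bounded, and taking $R$ large we may assume $\mathrm{supp}(\eta_R)\subset\Lambda_\varepsilon^c$. Because $|\eta_R v_n|\le|v_n|$ and $F_1$ is even and nondecreasing on $[0,\infty)$, the Luxemburg norm is monotone, giving $\|\eta_R v_n\|_{F_1}\le\|v_n\|_{F_1}$; combined with $|\nabla(\eta_R v_n)|^2\le 2|\nabla v_n|^2+2|v_n|^2|\nabla\eta_R|^2$, one obtains $\|\eta_R v_n\|_\varepsilon\le C'\|v_n\|_\varepsilon$, so by Lemma \ref{9} the test function $\eta_R v_n$ is admissible with uniformly bounded $X_\varepsilon$-norm.

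Now I would test $J'_\varepsilon(v_n)$ against $\eta_R v_n$:
\begin{equation*}
J'_\varepsilon(v_n)(\eta_R v_n)=\int\eta_R|\nabla v_n|^2+\int v_n\nabla v_n\cdot\nabla\eta_R+\int\eta_R(V(\varepsilon x)+1)v_n^2+\int\eta_R F'_1(v_n)v_n-\int\eta_R\, g_2(\varepsilon x,v_n)v_n.
\end{equation*}
By $(P_1)$ the term $\int\eta_R F'_1(v_n)v_n$ is nonnegative, and by $(A_1)(iii)$ together with the odd extension we have $g_2(\varepsilon x,v_n)v_n\le b_0 v_n^2$ on $\mathrm{supp}(\eta_R)\subset\Lambda_\varepsilon^c$. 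Rearranging yields
\begin{equation*}
\int\eta_R\bigl(|\nabla v_n|^2+((V(\varepsilon x)+1)-b_0)v_n^2\bigr)\le J'_\varepsilon(v_n)(\eta_R v_n)-\int v_n\nabla v_n\cdot\nabla\eta_R.
\end{equation*}
The crucial point is then to use the defining inequality $V(\varepsilon x)+1>2b_0$ to absorb the nonlinear contribution: $(V(\varepsilon x)+1)-b_0>\tfrac12(V(\varepsilon x)+1)$, so
\begin{equation*}
\tfrac12\int\eta_R\bigl(|\nabla v_n|^2+(V(\varepsilon x)+1)v_n^2\bigr)\le J'_\varepsilon(v_n)(\eta_R v_n)-\int v_n\nabla v_n\cdot\nabla\eta_R.
\end{equation*}

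Finally I would estimate the right-hand side. The Palais--Smale condition gives $\|J'_\varepsilon(v_n)\|_{X_\varepsilon^*}\to 0$, and since $\|\eta_R v_n\|_\varepsilon$ is uniformly bounded, $J'_\varepsilon(v_n)(\eta_R v_n)=o_n(1)$. Cauchy--Schwarz, the continuous embedding $X_\varepsilon\hookrightarrow H^1(\mathbb{R}^N)$, and the bound from Lemma \ref{9} give
\begin{equation*}
\left|\int v_n\nabla v_n\cdot\nabla\eta_R\right|\le\frac{C}{R}\|v_n\|_2\|\nabla v_n\|_2\le\frac{C_1}{R}.
\end{equation*}
Taking $\limsup_{n\to\infty}$ and using $\eta_R\equiv 1$ on $B_R^c(0)$ yields
\begin{equation*}
\limsup_{n\to\infty}\int_{B_R^c(0)}\bigl(|\nabla v_n|^2+(V(\varepsilon x)+1)|v_n|^2\bigr)\le\frac{2C_1}{R},
\end{equation*}
and choosing $R$ sufficiently large (and bigger than the radius that contains $\Lambda_\varepsilon/2$) makes this smaller than $\tau$.

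The main obstacle is not the cutoff mechanics but verifying at the outset that $\eta_R v_n\in X_\varepsilon$ with norm controlled uniformly in $R$, because the Luxemburg component of $\|\cdot\|_\varepsilon$ does not enjoy the pointwise multiplicativity of an $L^p$-norm; the monotonicity of $F_1$ together with Proposition \ref{4} is exactly what makes this step work. Once admissibility is in place, the sign of $F'_1(v_n)v_n$ and the absorption $(V+1)-b_0\ge(V+1)/2$ do the rest.
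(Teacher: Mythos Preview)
Your argument is correct and is precisely the standard del Pino--Felmer cutoff argument, which is what the paper has in mind: its proof of this lemma consists solely of the reference ``See \cite[Lemma 3.4]{Alves-Ji}.'' The only point requiring care beyond the cited $H^1$-setting is the admissibility of $\eta_R v_n$ in $X_\varepsilon$ with norm bounded uniformly in $n$ and $R$, and your use of the monotonicity of $F_1$ on $[0,\infty)$ (hence of the Luxemburg norm under pointwise domination $|\eta_R v_n|\le|v_n|$) together with Lemma \ref{9} handles this correctly.
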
 
	\begin{proof}
		See \cite[Lemma 3.4]{Alves-Ji}.
	\end{proof}	
	\begin{corollary}\label{12}
		The functional $J_\varepsilon$ satisfies the $(PS)$ condition.
	\end{corollary}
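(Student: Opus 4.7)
Let $(v_n)\subset X_\varepsilon$ be a $(PS)_c$-sequence for $J_\varepsilon$. By Lemma \ref{9} the sequence is bounded in $X_\varepsilon$; since Proposition \ref{4} gives $F_1,\widetilde{F_1}\in(\Delta_2)$, the Orlicz space $L^{F_1}(\mathbb{R}^N)$ is reflexive, and $H_\varepsilon$ is reflexive as a weighted Hilbert space, so $X_\varepsilon$ is reflexive as well. Extracting subsequences, I would obtain $v_n\rightharpoonup v$ in $X_\varepsilon$, $v_n\to v$ a.e., and $v_n\to v$ in $L^q_{loc}(\mathbb{R}^N)$ for every $q\in[1,2^*)$ via the compact embedding $H^1(B_R)\hookrightarrow L^q(B_R)$.

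The first main step is to establish strong convergence in $H_\varepsilon$. Testing $[J_\varepsilon'(v_n)-J_\varepsilon'(v)](v_n-v)=o_n(1)$ and rearranging produces the identity
\begin{equation*}
\|v_n-v\|_{H_\varepsilon}^2+\int_{\mathbb{R}^N}\bigl(F_1'(v_n)-F_1'(v)\bigr)(v_n-v)\,dx=\int_{\mathbb{R}^N}\bigl(g_2(\varepsilon x,v_n)-g_2(\varepsilon x,v)\bigr)(v_n-v)\,dx+o_n(1).
\end{equation*}
The two left-hand terms are non-negative, the second by convexity of $F_1$ (property $(P_1)$). The right-hand side tends to zero: on a ball $B_R$ it is handled by the subcritical bound $(A_1)$-$i)$ together with $v_n\to v$ in $L^p(B_R)$, while on $B_R^c$ Lemma \ref{11} renders the $L^2$ and (via Sobolev) $L^{2^*}$ tails of $v_n$ and $v$ arbitrarily small uniformly in $n$, which, by H\"older, controls the $g_2$-integral on $B_R^c$ by $\tau$. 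Hence $\|v_n-v\|_{H_\varepsilon}\to 0$, and, as a by-product to be used later, $\int_{\mathbb{R}^N}(F_1'(v_n)-F_1'(v))(v_n-v)\,dx\to 0$.

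The remaining step, which I expect to be the main obstacle, is to upgrade this to $\|v_n-v\|_{F_1}\to 0$. By Proposition \ref{4} and the $(\Delta_2)$-equivalence of modular and norm topologies, it suffices to prove $\int_{\mathbb{R}^N}F_1(|v_n-v|)\,dx\to 0$. The plan is to exploit convexity to obtain a Brezis--Lieb-type identity. From $F_1(v_n)-F_1(v)\ge F_1'(v)(v_n-v)$, integrated and combined with $F_1'(v)\in L^{\widetilde{F_1}}(\mathbb{R}^N)$ and the weak convergence $v_n\rightharpoonup v$, one gets $\liminf\int F_1(v_n)\ge\int F_1(v)$; from $F_1(v)-F_1(v_n)\ge F_1'(v_n)(v-v_n)$, integrated and combined with the by-product of the previous step, one gets $\limsup\int F_1(v_n)\le\int F_1(v)$. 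Hence $\int F_1(v_n)\to\int F_1(v)$, and coupling this with the pointwise convergence $v_n\to v$ a.e. and the Brezis--Lieb lemma for Young functions under $(\Delta_2)$ yields $\int F_1(|v_n-v|)\,dx\to 0$.

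The most delicate ingredients are (i) the tail estimate for the $g_2$-integral, for which Lemma \ref{11} combined with the $L^2$--$L^{2^*}$ interpolation afforded by the subcritical growth of $g_2$ is essential, and (ii) the verification of the Brezis--Lieb splitting inequality for the specific $F_1$ defined in (\ref{F1}); the latter is standard for a $C^1$, convex, even Young function satisfying the two-sided growth bound (\ref{2}), so no new idea beyond those already in place is required, only a careful application of (\ref{In}) to pass from modular to norm convergence.
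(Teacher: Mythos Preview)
Your argument is correct and arrives at the same conclusion, but the route differs from the paper's in two respects worth noting.

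First, to obtain $\|v_n-v\|_{H_\varepsilon}\to 0$, the paper does not test with $v_n-v$; instead it first argues (by citing \cite{Alves-Airton}) that the weak limit $v$ satisfies $J_\varepsilon'(v)=0$, and then compares $J_\varepsilon'(v_n)v_n=o_n(1)$ with $J_\varepsilon'(v)v=0$. Together with Lemma~\ref{11} and the growth condition $(A_1)$ this gives convergence of the nonlinear term $\int G_2'(\varepsilon x,v_n)v_n\to \int G_2'(\varepsilon x,v)v$, from which one deduces the ``convergence of norms'' $\|v_n\|_{H_\varepsilon}^2\to\|v\|_{H_\varepsilon}^2$ and $\int F_1'(v_n)v_n\to\int F_1'(v)v$ separately (each piece being weakly lower semicontinuous). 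Your monotonicity test $[J_\varepsilon'(v_n)-J_\varepsilon'(v)](v_n-v)=o_n(1)$ achieves the same end more directly and avoids the need to establish $J_\varepsilon'(v)=0$ beforehand.

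Second, for the passage to $\|v_n-v\|_{F_1}\to 0$, the paper exploits the structural inequality (\ref{2}), namely $lF_1(t)\le F_1'(t)t$, to dominate $F_1(v_n)$ by $\tfrac{1}{l}F_1'(v_n)v_n$; since the latter converges in $L^1$ (nonnegative integrands with convergent integrals and a.e.\ convergence), the generalized Lebesgue theorem yields $F_1(v_n)\to F_1(v)$ in $L^1$, and then $\Delta_2$ plus a further domination gives $\int F_1(|v_n-v|)\to 0$. Your path through the two convexity inequalities and a Brezis--Lieb lemma for the Young function $F_1$ is equally valid; the paper's version is slightly more economical because it gets $L^1$-convergence of $F_1(v_n)$ (not just convergence of the integrals) in one stroke via (\ref{2}), bypassing the explicit Brezis--Lieb splitting.
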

	\begin{proof}
		Let $(v_n)$ be a $(PS)_c$ sequence for $J_\varepsilon$. Without loss of generality we may assume that  $v_n\rightharpoonup v$ in $X_\varepsilon$ for some $v\in X_\varepsilon$. Moreover,  arguing as in \cite[Section 2]{Alves-Airton}, we also have $J_\varepsilon'(v)=0$, and so,  $J_\varepsilon'(v)v=0$, i.e.,
		\begin{equation}\label{10}
		||v||_{H_\varepsilon}^2+\int_{\mathbb{R}^{N}}F'_1(v)v=\int_{\mathbb{R}^{N}}G'_2(\varepsilon x, v)v.
		\end{equation}
		
	As the embedding $X_\varepsilon\hookrightarrow L^q(B_R(0))$ is compact for each $R>0$ and $p\in[2,2^*)$, the growth condition on $G'_2$ (see $(A_1)$ ) together with the Lemma \ref{11} yields 
		$$\int_{\mathbb{R}^{N}}G'_2(\varepsilon x, v_n)v_n\longrightarrow \int_{\mathbb{R}^N}G'_2(\varepsilon x, v)v.$$
		Taking into account this information and using the fact that $(v_n)$ is $(PS)$ sequence, we find
		$$||v_n||_{H_\varepsilon}^2+\int_{\mathbb{R}^{N}}F'_1(v_n)v_n=\int_{\mathbb{R}^{N}}G'_2(\varepsilon x, v_n)v_n+o_n(1).
		$$
The last equality combined with (\ref{10}) implies that 
		$$||v_n||_{H_\varepsilon}^2+\int_{\mathbb{R}^{N}}F'_1(v_n)v_n=||v||_{H_\varepsilon}^2+\int_{\mathbb{R}^{N}}F'_1(v)v+o_n(1),
		$$
from where it  follows that 
		\begin{equation}\label{13}
		||v_n||_{H_\varepsilon}^2\rightarrow||v||_{H_\varepsilon}^2
		\end{equation}
		and
		\begin{equation}\label{14}
		\int_{\mathbb{R}^{N}}F'_1(v_n)v_n \longrightarrow \int_{\mathbb{R}^{N}}F'_1(v)v,
		\end{equation}
and so, $v_n\rightarrow v$ in $H_\varepsilon$. It remains to show that $v_n\rightarrow v$ in $L^{F_1}(\mathbb{R}^N)$. Note that, since $F'_1(t)t\geq 0$, the convergence in (\ref{14}) means that 
		$$F'_1(v_n)v_n\rightarrow F'_1(v)v\,\,\,\,\text{in}\,\,\,\,L^1(\mathbb{R}^N).$$
		This fact associated with (\ref{2}) and Lebesgue's Dominated Convergence Theorem shows that, going to a subsequence if necessary, 
		$$F_1(v_n) \rightarrow F_1(v)\,\,\,\,\text{in}\,\,\,\,L^1(\mathbb{R}^N).$$
		Finally, using that $F_1 \in (\Delta_2)$, we deduce that
		$$\int_{\mathbb{R}^{N}}F_1(|v_n-v|)\longrightarrow 0,$$
		showing that $v_n\rightarrow v$ in $L^{F_1}(\mathbb{R}^N)$, which finishes the proof.
	\end{proof}	
	
	The main result of this section reads as follows
	\begin{theorem}\label{53}
For each  $\varepsilon>0$ the functional $J_\varepsilon$ has a nontrivial critical point $u_\varepsilon$. Consequently, $(\tilde{S}_\varepsilon)$ has a nontrivial solution.
\end{theorem}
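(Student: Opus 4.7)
The plan is to apply the classical Mountain Pass Theorem of Ambrosetti--Rabinowitz, since every ingredient needed has already been assembled in this section. The functional $J_\varepsilon$ is of class $C^1$ on the reflexive Banach space $(X_\varepsilon,\|\cdot\|_\varepsilon)$, so the abstract theorem applies once the geometric conditions and a compactness condition are checked.

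First I would recall that Lemma \ref{16} gives precisely the mountain pass geometry: item $i)$ yields $r,\rho>0$ with $J_\varepsilon(u)\geq\rho$ on the sphere $\{\|u\|_\varepsilon=r\}$, and item $ii)$ furnishes $v\in X_\varepsilon$ with $\|v\|_\varepsilon>r$ and $J_\varepsilon(v)<0=J_\varepsilon(0)$. Define the minimax level
\begin{equation*}
c_\varepsilon := \inf_{\gamma\in\Gamma}\max_{t\in[0,1]} J_\varepsilon(\gamma(t)), \qquad \Gamma:=\bigl\{\gamma\in C([0,1],X_\varepsilon):\gamma(0)=0,\ \gamma(1)=v\bigr\}.
\end{equation*}
Since every path in $\Gamma$ must cross the sphere $\{\|u\|_\varepsilon=r\}$, one has $c_\varepsilon\geq\rho>0$.

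Next I would invoke compactness: by Lemma \ref{9} every $(PS)_{c_\varepsilon}$ sequence is bounded in $X_\varepsilon$, and then Corollary \ref{12} guarantees that $J_\varepsilon$ satisfies the Palais--Smale condition at the level $c_\varepsilon$. The Mountain Pass Theorem then produces $u_\varepsilon\in X_\varepsilon$ with $J_\varepsilon(u_\varepsilon)=c_\varepsilon>0$ and $J_\varepsilon'(u_\varepsilon)=0$. Because $J_\varepsilon(0)=0<c_\varepsilon$, the critical point $u_\varepsilon$ is nontrivial.

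Finally, a critical point of $J_\varepsilon$ on $X_\varepsilon$ is by definition a weak solution of $(\tilde{S}_\varepsilon)$ (the conditions on $g_2$ and $F_1$ make the right-hand side of the Euler--Lagrange equation well defined, as already remarked after the definition of $J_\varepsilon$). Since no new estimate is really required—every nontrivial step has been proved in the preceding lemmas—I do not expect a genuine obstacle here; the only thing to be mildly careful about is to cite the correct minimax class and to observe that the compactness required is exactly that provided by Corollary \ref{12}, so that the abstract theorem applies verbatim in $X_\varepsilon$ rather than in $H^1(\mathbb{R}^N)$.
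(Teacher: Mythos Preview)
Your proposal is correct and follows essentially the same route as the paper: invoke Lemma~\ref{16} for the mountain pass geometry and Corollary~\ref{12} for the Palais--Smale condition, then apply the Ambrosetti--Rabinowitz Mountain Pass Theorem in $X_\varepsilon$ to obtain a critical point at level $c_\varepsilon>0$. The only cosmetic difference is that the paper defines the admissible class $\Gamma_\varepsilon$ via the condition $J_\varepsilon(\gamma(1))<0$ rather than fixing the endpoint $\gamma(1)=v$, but this does not affect the argument.
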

\begin{proof}
	By Lemma \ref{16} and Corollary \ref{12}, we see that the functional $J_\varepsilon$ satisfies the assumptions of the Mountain Pass Theorem found in \cite[Theorem 2.1]{AMBRab}, then the mountain pass level given by 
	$$c_\varepsilon:=\inf_{\gamma \in \Gamma_\varepsilon}\max_{t \in [0,1]}J_\varepsilon(\gamma(t))$$
	with
	$$\Gamma_\varepsilon:=\{\gamma \in C([0,1],X_\varepsilon);\, \gamma(0)=0\,\,\,\text{and}\,\,\,J_\varepsilon(\gamma(1))<0\},$$
	is a critical point of $J_\varepsilon$. 
\end{proof}

	From now on, otherwise mentioned, the notation $u_\varepsilon$ designates the solution of $(\tilde{S}_\varepsilon)$ given in the preceding theorem.
	
	\section{The Nehari manifold and the existence of positive solution for $(P_\varepsilon)$}	
	
	In this section we will prove that the Nehari set associated with $J_\varepsilon$, namely
	$$\mathcal{N}_\varepsilon:=\{u\in X_\varepsilon-\{0\};\,J'_\varepsilon(u)u=0\},$$
	is a $C^1$-manifold and that critical points of $J_\varepsilon|_{\mathcal{N}_\varepsilon}$ are critical points of $J_\varepsilon$ in the usual sense. Furthermore, by studying the behavior of levels $c_\varepsilon$ as $\varepsilon \rightarrow 0^+$, we will prove some properties related with $\mathcal{N}_\varepsilon$ that allow us to prove that the solutions $u_\varepsilon$ of $(\tilde{S}_\varepsilon)$ are solutions of $(S_\varepsilon)$ for $\varepsilon \approx 0^+$.
	
	\subsection{Main properties of $\mathcal{N}_\varepsilon$} First of all, set 
	$$
	\Psi_\varepsilon(u):=J_\varepsilon(u)-\frac{1}{2}\int_{\mathbb{R}^N}|u|^2-\left[\int_{\mathbb{R}^{N}}[F_2(u)-\frac{1}{2}F'_2(u)u+\frac{1}{2}G'_2(\varepsilon x,u)u-G_2(\varepsilon x,u)]\right].
	$$
	Accordingly to $(\ref{6})$,  
	$$
	\mathcal{N}_\varepsilon = \Psi_\varepsilon^{-1}(\{0\}).
	$$ 
	
	We start our study with the following result
	\begin{proposition}\label{21}
		There exists $\beta>0$, such that 
		$$||u||_\varepsilon \geq ||u||_{H_\varepsilon}\geq\beta,\,\,\,\forall u \in \mathcal{N}_\varepsilon,$$
		for all $\varepsilon>0$.
	\end{proposition}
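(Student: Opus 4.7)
The first inequality $\|u\|_\varepsilon \geq \|u\|_{H_\varepsilon}$ is immediate from the definition $\|u\|_\varepsilon = \|u\|_{H_\varepsilon} + \|u\|_{F_1}$, so the real content is the uniform lower bound $\|u\|_{H_\varepsilon} \geq \beta$. The plan is a standard Nehari-type estimate, exploiting the particular choice $(\inf_{\mathbb{R}^N} V + 1) > 2b_0$ made when the penalization was set up.

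Fix $u \in \mathcal{N}_\varepsilon$, so that $J'_\varepsilon(u)u = 0$, i.e.
\begin{equation*}
\|u\|_{H_\varepsilon}^2 + \int_{\mathbb{R}^N} F'_1(u)u = \int_{\mathbb{R}^N} G'_2(\varepsilon x, u)u.
\end{equation*}
By property $(P_1)$, $F'_1(u)u \geq 0$, so the middle term can be dropped to obtain
\begin{equation*}
\|u\|_{H_\varepsilon}^2 \leq \int_{\mathbb{R}^N} g_2(\varepsilon x, u)u.
\end{equation*}
Using the odd extension of $g_2$ and the growth estimate $(A_1)\,i)$, we have $g_2(\varepsilon x, u)u \leq b_0 u^2 + C|u|^p$ for some $p \in (2, 2^*)$, hence
\begin{equation*}
\|u\|_{H_\varepsilon}^2 \leq b_0 \|u\|_2^2 + C \|u\|_p^p.
\end{equation*}

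Next, I absorb the quadratic term. Since $V(\varepsilon x) + 1 \geq \inf_{\mathbb{R}^N} V + 1 > 2b_0$ by the choice of $b_0$, we get
\begin{equation*}
\|u\|_{H_\varepsilon}^2 \geq \int_{\mathbb{R}^N}(V(\varepsilon x)+1)|u|^2 \geq 2b_0 \|u\|_2^2,
\end{equation*}
so $b_0\|u\|_2^2 \leq \tfrac{1}{2}\|u\|_{H_\varepsilon}^2$, and therefore
\begin{equation*}
\tfrac{1}{2}\|u\|_{H_\varepsilon}^2 \leq C\|u\|_p^p.
\end{equation*}
Moreover, setting $C_0 := \min\{1, \inf_{\mathbb{R}^N}V + 1\} > 0$ (positive thanks to $(V_1)$, and independent of $\varepsilon$), one has $\|u\|_{H^1(\mathbb{R}^N)}^2 \leq C_0^{-1}\|u\|_{H_\varepsilon}^2$. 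Combining with the Sobolev embedding $H^1(\mathbb{R}^N) \hookrightarrow L^p(\mathbb{R}^N)$, we find a constant $S_p > 0$, independent of $\varepsilon$, such that $\|u\|_p \leq S_p \|u\|_{H_\varepsilon}$.

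Substituting, $\tfrac{1}{2}\|u\|_{H_\varepsilon}^2 \leq C S_p^{\,p} \|u\|_{H_\varepsilon}^p$. Since $\|u\|_{H_\varepsilon} > 0$ (as $u \neq 0$) and $p > 2$, we conclude
\begin{equation*}
\|u\|_{H_\varepsilon} \geq \left(\tfrac{1}{2CS_p^{\,p}}\right)^{1/(p-2)} =: \beta > 0,
\end{equation*}
which proves the claim. There is no real obstacle here: the only delicate point is making sure every constant appearing in the argument is independent of $\varepsilon$, which is guaranteed by $(V_1)$ (uniform lower bound on $V+1$) and by the choice of $b_0$ so that $\inf V + 1 > 2b_0$; everything else is a direct manipulation of the Nehari identity combined with the subcritical growth of $g_2$.
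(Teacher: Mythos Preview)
Your proof is correct and follows essentially the same approach as the paper: use the Nehari identity $J'_\varepsilon(u)u=0$, drop the nonnegative $F'_1(u)u$ term, bound $g_2(\varepsilon x,u)u$ by $b_0u^2+C|u|^p$ via $(A_1)$, absorb the quadratic part using $\inf V+1>2b_0$, and conclude with the Sobolev embedding. The only cosmetic difference is that the paper packages the absorption step by introducing the equivalent norm $\|u\|_0^2=\int(|\nabla u|^2+(\alpha_0+1-b_0)|u|^2)$ rather than splitting off $\tfrac{1}{2}\|u\|_{H_\varepsilon}^2$ as you do.
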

	\begin{proof}
		For each $u \in \mathcal{N}_\varepsilon$, 
		$$\int_{\mathbb{R}^N}(|\nabla u|^2+(V(\varepsilon x)+1)|u|^2) +\int_{\mathbb{R}^N}F'_1(u)u=\int_{\mathbb{R}^N}G'_2(\varepsilon,u)u.$$
		Therefore, from $(A_1)$,  
		\begin{equation}\label{17}
		\int_{\mathbb{R}^N}(|\nabla u|^2+(\alpha_0+1-b_0)|u|^2)\leq C\int_{\mathbb{R}^N}|u|^p,
		\end{equation}
		where $\alpha_0=\displaystyle{\inf_{\R^N}} V$. The number $b_0$ has been chosen so that $\alpha_0+1-b_0>0$, then the expression 
		$$||u||_0^2:=\int_{\mathbb{R}^N}(|\nabla u|^2+(\alpha_0+1-b_0)|u|^2)$$
		defines a norm on $H^1(\mathbb{R}^N)$. Setting $H=(H^1(\mathbb{R}^N), ||\cdot||_0)$, one sees that the embedding $H\hookrightarrow L^p(\mathbb{R}^N)$ is continuous. From (\ref{17}), 
		$$ M\leq ||u||_0^{p-2},$$
		for a convenient $M>0$ that is independent of $\varepsilon$. The last inequality yields
		$$0<\beta:=M^{\frac{1}{(p-2)}}\leq ||u||_0\leq ||u||_{H_\varepsilon}\leq||u||_\varepsilon.$$
	\end{proof}	
	For the sake of completeness, we would like to mention that repeating the ideas found in \linebreak \cite[Lemma 3.6 and Remark 3.1]{Alves-Ji}, it can be proved the following lemma 
	\begin{lemma} 	\label{27} For each $u \in O_\varepsilon=\{u \in X_\varepsilon;\, |\text{supp}(|u|)\cap\Lambda_\varepsilon|>0\}$, there is a unique $t_u>0$ such that $t_uu \in \mathcal{N}_\varepsilon$. Reciprocally, if $u \in \mathcal{N}_\varepsilon$, then $u \in O_\varepsilon.$
				
	\end{lemma}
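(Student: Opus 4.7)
\textbf{Proof proposal for Lemma \ref{27}.} I would establish the two implications independently.

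For the \emph{reciprocal} direction, suppose $u \in \mathcal{N}_\varepsilon$ satisfies $|\mathrm{supp}(|u|) \cap \Lambda_\varepsilon| = 0$, so that $u \equiv 0$ a.e.\ on $\Lambda_\varepsilon$. Then $J'_\varepsilon(u)u = 0$ reduces to
$$\|u\|_{H_\varepsilon}^2 + \int_{\R^N} F'_1(u)\,u = \int_{\Lambda_\varepsilon^c} g_2(\varepsilon x, u)\,u.$$
By $(P_1)$ we have $F'_1(u)u \geq 0$, and $(A_1)$-iii) together with the oddness of $g_2$ gives $g_2(\varepsilon x, u)u \leq b_0 |u|^2$ on $\Lambda_\varepsilon^c$. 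Since the choice of $b_0$ yields $V(\varepsilon x) + 1 \geq \alpha_0 + 1 > 2b_0$, one obtains $(\alpha_0 + 1 - b_0)\|u\|_{L^2(\R^N)}^2 \leq 0$, forcing $u \equiv 0$, which contradicts $u \in \mathcal{N}_\varepsilon$.

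For the \emph{direct} direction, fix $u \in O_\varepsilon$ and set $\phi(t) := J_\varepsilon(tu)$ for $t \geq 0$. Since $g_2(\varepsilon x, \cdot) = F'_2$ on $\Lambda_\varepsilon$, the identity $F'_2(s) - F'_1(s) = s \log s^2 + s$ obtained by differentiating (\ref{3}) gives, after a direct computation,
$$\frac{\phi'(t)}{t} = \|u\|_{H_\varepsilon}^2 - (2\log t + 1)\int_{\Lambda_\varepsilon} |u|^2 - \int_{\Lambda_\varepsilon} |u|^2 \log |u|^2 + \int_{\Lambda_\varepsilon^c} \frac{[F'_1(tu) - \tilde{F}'_2(tu)]\,u}{t}.$$
Since $u \in O_\varepsilon$ ensures $\int_{\Lambda_\varepsilon}|u|^2 > 0$, the $-2\log t$ term drives $\phi'(t)/t \to +\infty$ as $t \to 0^+$. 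As $t \to \infty$, Lemma \ref{16}-ii) yields $\phi(t) \to -\infty$, and the linear asymptotics of $F'_1$ and $\tilde{F}'_2$ force the $\Lambda_\varepsilon^c$-integral to be a bounded perturbation of the dominant $-2\log t$ term (using $X_\varepsilon \hookrightarrow L^2(\R^N)$). The intermediate value theorem then supplies at least one $t_u > 0$ with $\phi'(t_u) = 0$, i.e.\ $t_u u \in \mathcal{N}_\varepsilon$.

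Uniqueness would follow by showing that $\psi(t) := \phi'(t)/t$ is strictly decreasing on $(0, \infty)$. The $-2\log t\int_{\Lambda_\varepsilon}|u|^2$ summand contributes $-\tfrac{2}{t}\int_{\Lambda_\varepsilon}|u|^2 < 0$ to $\psi'(t)$. Moreover, $(P_3)$ together with $(h_3)$ implies that $s \mapsto \tilde{F}'_2(s)/s$ is nondecreasing, so $t \mapsto \tilde{F}'_2(tu)u/t$ is nondecreasing; and a direct inspection of the piecewise formula (\ref{F1}) for $F'_1$ shows $s \mapsto F'_1(s)/s$ is nonincreasing (it equals $-\log s^2 - 1$ for $0 < s < \delta$ and $-(\log \delta^2 + 3) + 2\delta/s$ for $s \geq \delta$). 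Combining these monotonicities forces $\psi'(t) < 0$, yielding uniqueness of $t_u$. The main obstacle I anticipate is the bookkeeping in the transition regime $t_1 \leq |tu| \leq t_2$, where the monotonicity of $\tilde{F}'_2(s)/s$ rests precisely on $(h_3)$ and one must verify that the $C^1$ glueing across the three pieces of the definition of $\tilde{F}'_2$ preserves the strict sign needed in $\psi'(t)$.
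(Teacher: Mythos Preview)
Your proof is correct and follows essentially the route the paper has in mind. The paper itself does not write out a proof of this lemma but defers to \cite[Lemma~3.6 and Remark~3.1]{Alves-Ji}; your argument is precisely the standard Nehari fibering analysis those references carry out, and your treatment of the reciprocal direction is verbatim the computation the paper performs later inside Proposition~\ref{20}. The only minor point is that the claim $\phi'(t)/t\to+\infty$ as $t\to 0^+$ is stronger than what you actually need: once you have established that $\psi(t)=\phi'(t)/t$ is strictly decreasing, the mountain pass geometry of Lemma~\ref{16} already forces $\psi(t)>0$ for small $t$ and $\psi(t)<0$ for large $t$, which suffices for existence and uniqueness without tracking the precise blow-up rate (though your blow-up claim is in fact true, via monotone convergence applied to $u^2\cdot F_1'(tu)/(tu)$).
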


	In the next proposition we prove that $\mathcal{N}_\varepsilon$ is a $C^1$-manifold for each $\varepsilon>0$.
	
	\begin{proposition}\label{20}
$\mathcal{N}_\varepsilon$ is a $C^1$-manifold for each $\varepsilon>0$. 
	\end{proposition}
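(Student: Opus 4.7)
The plan is to realize $\mathcal{N}_\varepsilon$ as the zero set (minus the origin) of a $C^1$ functional whose differential does not vanish on $\mathcal{N}_\varepsilon$, and then invoke the standard submersion theorem. Starting from (\ref{6}) and the definition of $\Psi_\varepsilon$, a direct manipulation gives $\Psi_\varepsilon(u) = \tfrac{1}{2} J'_\varepsilon(u)u$, so $\mathcal{N}_\varepsilon = \Psi_\varepsilon^{-1}(0) \setminus \{0\}$. The proof thus reduces to two tasks: $(a)$ $\Psi_\varepsilon \in C^1(X_\varepsilon, \mathbb{R})$, and $(b)$ $\Psi'_\varepsilon(u) \neq 0$ in $X_\varepsilon^{*}$ for every $u \in \mathcal{N}_\varepsilon$.

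For task $(a)$, I would differentiate term by term in the explicit formula defining $\Psi_\varepsilon$. A piecewise inspection of $F_2$ reveals $F_2 \in C^2(\mathbb{R})$ with $|F''_2(s)| \leq C(1 + |s|^{p-2})$ for some $p \in (2, 2^*)$; the analogous regularity and growth for $G_2$ follow from conditions $(h_2)$--$(h_3)$ and the construction of $\overline{F}'_2$. Combined with the continuous embeddings $X_\varepsilon \hookrightarrow H^1(\mathbb{R}^N) \hookrightarrow L^q(\mathbb{R}^N)$ for $q \in [2, 2^*]$, standard Nemytskii arguments then deliver the $C^1$-regularity on $X_\varepsilon$ of every summand of $\Psi_\varepsilon$.

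For task $(b)$, it suffices to exhibit one direction in which $\Psi'_\varepsilon(u)$ is nonzero; the natural candidate is $u$ itself, and I will show $\Psi'_\varepsilon(u)u < 0$. Introduce
\[
\tilde\phi_u(t) := \|u\|^2_{H_\varepsilon} + \int_{\mathbb{R}^N} \rho_{F_1}(t|u|)\, u^2 - \int_{\mathbb{R}^N} \rho_G(\varepsilon x, t|u|)\, u^2, \qquad t > 0,
\]
with $\rho_{F_1}(s) := F'_1(s)/s$ and $\rho_G(x,s) := G'_2(x,s)/s$. Since $F'_1$ and $G'_2(\cdot, s)$ are odd in $s$, a direct computation gives $\Psi_\varepsilon(tu) = \tfrac{t^2}{2}\tilde\phi_u(t)$; because $\tilde\phi_u(1) = J'_\varepsilon(u)u = 0$ on $\mathcal{N}_\varepsilon$, the chain rule yields $\Psi'_\varepsilon(u)u = \tfrac{1}{2}\tilde\phi'_u(1)$. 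The decisive monotonicity inputs are: the piecewise formulas $\rho_{F_1}(s) = -\log s^2 - 1$ on $(0, \delta)$ and $\rho_{F_1}(s) = -(\log\delta^2 + 3) + 2\delta/s$ on $[\delta, \infty)$ show $\rho_{F_1} \in C^1((0,\infty))$ (left and right derivatives at $s = \delta$ both equal $-2/\delta$) with $\rho'_{F_1}(s) < 0$ \emph{strictly}; while $\rho_G(x, \cdot)$ is nondecreasing on $(0, \infty)$ by $(P_3)$, $(h_3)$, and the matching condition $(h_2)$. Differentiation under the integral sign (justified by dominated convergence, with $u \in L^2 \cap L^{F_1} \cap L^{2^*}$ supplying the majorants) yields
\[
\tilde\phi'_u(t) = \int_{\mathbb{R}^N} \rho'_{F_1}(t|u|)\,|u|\,u^2 - \int_{\mathbb{R}^N} \partial_s\rho_G(\varepsilon x, t|u|)\,|u|\,u^2,
\]
which is strictly negative: the first integral is strictly negative since Proposition \ref{21} ensures $u \not\equiv 0$, while the second integrand is pointwise nonnegative. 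Hence $\Psi'_\varepsilon(u)u < 0$, and the submersion theorem produces the desired $C^1$-manifold structure.

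I expect the principal obstacle to be the rigorous justification of the differentiation under the integral sign, both in step $(a)$ and in the Leibniz computation for $\tilde\phi'_u$. The subtlety is that $F_1$ is \emph{not} $C^2$ at the origin ($F''_1(s) = -\log s^2 - 3$ blows up as $s \to 0^+$), so a direct $C^2$-Nemytskii argument is unavailable. The remedy is to phrase everything in terms of the ratio $\rho_{F_1}(s) = F'_1(s)/s$ rather than $F''_1$ itself: $\rho_{F_1}$ is of class $C^1$ on $(0,\infty)$, and the only singularity that remains---of logarithmic type as $s \to 0^+$---is absorbed by the integrability of $\int u^2|\log u^2|$ inherent to $u \in L^{F_1}(\mathbb{R}^N)$.
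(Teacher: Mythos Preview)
Your proof is correct and reaches the same conclusion $\Psi'_\varepsilon(u)u\neq 0$ on $\mathcal{N}_\varepsilon$, but by a genuinely different route from the paper. The paper argues by contradiction: it writes out $\Psi'_\varepsilon(u)u$ explicitly in terms of $F_2,F''_2,G'_2,G''_2$, splits the integrals over $\Lambda_\varepsilon$ and $\Lambda_\varepsilon^c$, and shows via pointwise sign computations (using the explicit formula for $F_2$, condition $(h_3)$, and the linearity of $\overline{F}'_2$ beyond $t_2$) that $\Psi'_\varepsilon(u)u=0$ forces $u\equiv 0$ on $\Lambda_\varepsilon$; then $u\in\mathcal{N}_\varepsilon$ together with $g_2(x,t)\leq b_0 t$ off $\Lambda$ yields $u\equiv 0$, a contradiction. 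Your approach is instead a direct monotonicity argument: by rewriting $\Psi_\varepsilon(tu)=\tfrac{t^2}{2}\tilde\phi_u(t)$ and exploiting the \emph{strict} decrease of $\rho_{F_1}=F'_1(s)/s$ (which you correctly verify is $C^1$ on $(0,\infty)$ despite $F_1\notin C^2$) together with the nondecrease of $\rho_G=G'_2(x,s)/s$, you obtain $\Psi'_\varepsilon(u)u=\tfrac12\tilde\phi'_u(1)<0$ outright. This gives a sharper conclusion (a definite sign) and bypasses the case analysis over $\Lambda_\varepsilon$ versus $\Lambda_\varepsilon^c$; the paper's computation, on the other hand, keeps the geometry of the penalization visible and feeds directly into the later Proposition~\ref{45}. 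Your handling of task~$(a)$ is also more explicit than the paper's: you correctly observe that the original defining formula for $\Psi_\varepsilon$ involves only $J_\varepsilon$, $\|u\|_2^2$, and integrals of $F_2,F'_2\!\cdot\! u,G_2,G'_2\!\cdot\! u$, so the $C^1$-regularity follows from $F_2,G_2\in C^2$ with subcritical second derivative, sidestepping the singularity of $F''_1$ at the origin.
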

	\begin{proof}
In the sequel we will prove that for all $u \in \mathcal{N}_\varepsilon$ we must have $\Psi'_\varepsilon(u)u \not= 0.$
Assume by contradiction that there is $u \in \mathcal{N}_\varepsilon$ with $\Psi'_\varepsilon(u)u=0$, i.e.,
		$$0=-\int_{\mathbb{R}^N}|u|^2-\left[\int_{\mathbb{R}^N}(\frac{1}{2}F'_2(u)u-\frac{1}{2}F''_2(u)u^2)+\int_{\mathbb{R}^N}(\frac{1}{2}G''_2(\varepsilon x, u)u^2-\frac{1}{2}G'_2(\varepsilon x,u)u)\right].$$
		Using that $G'_2\equiv F'_2$ in $\Lambda_\varepsilon$, we find
		\begin{equation}\label{18}
		0=-\int_{\Lambda_\varepsilon}|u|^2-\left[\int_{\Lambda_\varepsilon^c}(|u|^2+\frac{1}{2}F'_2(u)u-\frac{1}{2}F''_2(u)u^2)+\int_{\Lambda_\varepsilon^c}(\frac{1}{2}G''_2(\varepsilon x, u)u^2-\frac{1}{2}G'_2(\varepsilon x,u)u)\right].
		\end{equation}
By the definition of $F_2$, 
		$$F'_2(s):=	\left\{\begin{aligned}
			&0 , \quad \,& s\in[0,\delta];\\
			 &s \log \left(\frac{s^2}{\delta^2}\right) + 2\delta -2s,\,&|s|\geq \delta,
		\end{aligned}
		\right.
		$$
and so, 
$$
t^2+\frac{1}{2}F'_2(t)t-\frac{1}{2}F''_2(t)t^2=\delta t>0,\,\,\,\, t\geq \delta,
$$
leading to 
$$
|u|^2+\frac{1}{2}F'_2(u)u-\frac{1}{2}F''_2(u)u^2\geq 0,\,\,\,\text{a.e}\,\,\,x \in \Lambda_\varepsilon^c.
$$
Using this information and the fact that $G'_2(\varepsilon x,t)\equiv F'_2(t)$, for $x \in  \Lambda_\varepsilon^c$ and $t\leq t_1$ in (\ref{18}), we arrive at
$$
\int_{\Lambda_\varepsilon}|u|^2\leq-\int_{\Lambda_\varepsilon^c\cap[t_1<|u|<t_2]}(\frac{1}{2}G''_2(\varepsilon x, u)u^2-\frac{1}{2}G'_2(\varepsilon x,u)u)\\
-\int_{\Lambda_\varepsilon^c\cap[|u|\geq t_2]}(\frac{1}{2}G''_2(\varepsilon x, u)u^2-\frac{1}{2}G'_2(\varepsilon x,u)u).
$$	
As $G'_2(\varepsilon x,u)=h(u)$ for $x \in \Lambda_\varepsilon^c$ and $u(x) \in (t_1,t_2)$,  $(h_3)$ gives 
	$$G''_2(\varepsilon x, u)u^2-\frac{1}{2}G'_2(\varepsilon x,u)u=\frac{1}{2}(h'(u)u-h(u))u\geq 0,\,\,\, \text{a.e}\,\,\, x \in\Lambda_\varepsilon^c\cap[t_1<|u|<t_2].$$
	Note also that, by the definition of $\overline{F}'_2$,
	$$G''_2(\varepsilon x, u)u^2-\frac{1}{2}G'_2(\varepsilon x,u)u=0,\,\,\,\text{a.e}\,\,\, x \in \Lambda_\varepsilon^c\cap[|u|\geq t_2].$$
Gathering the above information, we derive that $u=0,\,\,\,\text{a.e.}\,\,\,x\in\Lambda_\varepsilon.$ Hence, inasmuch as $u \in \mathcal{N}_\varepsilon$, we get
	$$||u||_{H_\varepsilon}^2+\int_{\mathbb{R}^N}F'_1(u)u=\int_{\Lambda_\varepsilon^c}G'_2(\varepsilon x,u)u\leq b_0\int_{\mathbb{R}^N}|u|^2$$
that leads to $u\equiv 0$, which is absurd because $u \in \mathcal{N}_\varepsilon$, showing the desired result.
	\end{proof}

	In view of the last proposition, we can establish the notion of critical point for $J_\varepsilon|_{\mathcal{N}_\varepsilon}$. Recall that $u \in \mathcal{N}_\varepsilon$ is a critical point of $J_\varepsilon$ constrained to $\mathcal{N}_\varepsilon$ when
	$$||J'_\varepsilon(u)||_*:=\min_{\lambda\in \mathbb{R}}||J'_\varepsilon(u)-\lambda\Psi'_\varepsilon(u)||=0. \quad \mbox{(See \cite[Proposition 5.2]{Willem})}$$
By a $(PS)_c$  sequence associated with $J_\varepsilon|_{\mathcal{N}_\varepsilon}$, we mean a sequence $(u_n)$ in $\mathcal{N}_\varepsilon$ such that
	$$J(u_n)\rightarrow c\,\,\,\,\text{and}\,\,\,\,||J'(u_n)||_*\rightarrow 0.$$
From now on, we say that $J_\varepsilon|_{\mathcal{N}_\varepsilon}$ satisfies the $(PS)$ condition when each $(PS)_c$ sequence for $J_\varepsilon|_{\mathcal{N}_\varepsilon}$ has a convergent subsequence, for any $c \in \mathbb{R}$.
	 
	The next proposition relates critical points of  $J_\varepsilon|_{\mathcal{N}_\varepsilon}$ with  critical points of $J_\varepsilon$ in $X_\varepsilon$.
	
	\begin{proposition}\label{47}
		Let $u \in \mathcal{N}_\varepsilon$ be a critical point of $J_\varepsilon$ constrained to $\mathcal{N}_\varepsilon$. Then $u$ is a critical point of $J_\varepsilon$ on $X_\varepsilon$. 
	\end{proposition}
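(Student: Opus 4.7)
The plan is to execute a standard Lagrange-multiplier argument, leveraging the transversality fact that was already established in the proof of Proposition \ref{20}. Since $u$ is a critical point of $J_\varepsilon$ constrained to $\mathcal{N}_\varepsilon$, the hypothesis $\|J'_\varepsilon(u)\|_* = \min_{\lambda \in \R}\|J'_\varepsilon(u) - \lambda \Psi'_\varepsilon(u)\| = 0$ means that the minimum is attained at some $\lambda_0 \in \R$ with
$$J'_\varepsilon(u) = \lambda_0\, \Psi'_\varepsilon(u) \quad \text{in } X_\varepsilon^*.$$
So the entire task is to show $\lambda_0 = 0$.

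To pin down $\lambda_0$, I would test the above identity against the direction $u \in X_\varepsilon$ itself, obtaining
$$J'_\varepsilon(u)u = \lambda_0\, \Psi'_\varepsilon(u)u.$$
On the left, membership $u \in \mathcal{N}_\varepsilon$ gives $J'_\varepsilon(u)u = 0$ directly from the definition of the Nehari set. On the right, I invoke exactly what was proved in Proposition \ref{20}: for every $u \in \mathcal{N}_\varepsilon$ one has $\Psi'_\varepsilon(u)u \ne 0$ (that proposition was precisely the statement that $\mathcal{N}_\varepsilon = \Psi_\varepsilon^{-1}(\{0\})$ is a $C^1$-manifold because $0$ is a regular value of $\Psi_\varepsilon$ on this set). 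Combining these two facts forces $\lambda_0 = 0$, and therefore $J'_\varepsilon(u) = 0$ as an element of $X_\varepsilon^*$, i.e. $u$ is a free critical point of $J_\varepsilon$ on $X_\varepsilon$.

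There is really no substantial obstacle here: the nontrivial work was done already in Proposition \ref{20}, where the non-vanishing of $\Psi'_\varepsilon(u)u$ on $\mathcal{N}_\varepsilon$ was verified by carefully exploiting the explicit form of $F_2$ on $[\delta,\infty)$, the monotonicity condition $(h_3)$ on the interpolating function $h$ on $[t_1,t_2]$, and the definition $\overline{F}'_2(t)=b_0 t$ for $t\geq t_2$, to conclude that the nonnegative contributions from $\Lambda_\varepsilon^c$ could not compensate the term $\int_{\Lambda_\varepsilon}|u|^2$. The only item worth double-checking while writing the proof is that $\Psi_\varepsilon$ is $C^1$ on $X_\varepsilon$ (so that the Lagrange multiplier formulation in terms of $\|\cdot\|_*$ from \cite[Proposition 5.2]{Willem} genuinely applies); this follows from the same regularity of the pieces $F_1, F_2, G_2$ that was used to show $J_\varepsilon \in C^1(X_\varepsilon,\R)$ in Section 3, together with the obvious $C^1$ character of $u \mapsto \tfrac{1}{2}\int |u|^2$ on $X_\varepsilon \hookrightarrow L^2(\R^N)$.
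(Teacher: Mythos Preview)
Your proof is correct and follows essentially the same approach as the paper: write $J'_\varepsilon(u)=\lambda\,\Psi'_\varepsilon(u)$, test against $u$ to get $0=\lambda\,\Psi'_\varepsilon(u)u$, and invoke Proposition~\ref{20} to conclude $\Psi'_\varepsilon(u)u\neq 0$, hence $\lambda=0$. The additional remarks you make about the $C^1$ regularity of $\Psi_\varepsilon$ are accurate and harmless, though the paper leaves them implicit.
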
 
	\begin{proof}
		If $u \in \mathcal{N}_\varepsilon$ is a critical point of $J_\varepsilon|_{\mathcal{N}_\varepsilon}$, then
		$$J'_\varepsilon(u)=\lambda\Psi'\varepsilon(u),$$
		for some $\lambda \in \mathbb{R}$. Consequently, 
		$$
		0=J'_\varepsilon(u)u=\lambda\Psi'_\varepsilon(u)u.
		$$
		Since $u \in \mathcal{N}_\varepsilon$, the arguments explored in the proof of Proposition \ref{20} yields $\Psi'_\varepsilon(u)u\neq 0$. Hence, the above equality guarantees that $\lambda=0$ and the proof is over.  
	\end{proof}	

	We finish this subsection by proving that $J_\varepsilon|_{\mathcal{N}_\varepsilon}$ satisfies the $(PS)$ condition.
	
	\begin{proposition}\label{45}
		$J_\varepsilon|_{\mathcal{N}_\varepsilon}$ satisfies the $(PS)$ condition.
	\end{proposition}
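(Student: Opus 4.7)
The plan is the standard Lagrange-multiplier strategy: given a $(PS)_c$ sequence $(u_n)\subset\mathcal{N}_\varepsilon$, write $J'_\varepsilon(u_n)=\lambda_n\Psi'_\varepsilon(u_n)+o_n(1)$ in $X_\varepsilon^{*}$, show that $\lambda_n\to 0$, and then invoke Corollary \ref{12} to conclude that $(u_n)$ is a free $(PS)_c$ sequence for $J_\varepsilon$ and therefore admits a convergent subsequence.

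First I would establish the boundedness of $(u_n)$ in $X_\varepsilon$. Since $u_n\in\mathcal{N}_\varepsilon$ gives $J'_\varepsilon(u_n)u_n=0$, the identity (\ref{6}) together with property $(A_1)$-$iv)$ yields exactly the same lower bound that drove Lemma \ref{9}; the argument of that lemma then carries over verbatim (in fact more easily, because the error term $o_n(1)\|u_n\|_\varepsilon$ from the free $(PS)$ setting is now absent). In particular one extracts the same polynomial-in-norm estimate and deduces $\|u_n\|_\varepsilon\le M$.

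The crucial step is a uniform lower bound $|\Psi'_\varepsilon(u_n)u_n|\ge c_0>0$. Using that $G'_2(\varepsilon x,t)\le b_0 t$ on $\Lambda_\varepsilon^c$ together with the choice $\inf V+1>2b_0$, the Nehari identity gives
\[
\|u_n\|_{H_\varepsilon}^2+\int_{\mathbb{R}^N}F'_1(u_n)u_n \le \int_{\Lambda_\varepsilon}F'_2(u_n)u_n+\tfrac12\|u_n\|_{H_\varepsilon}^2,
\]
so Proposition \ref{21} implies $\beta^2/2\le\int_{\Lambda_\varepsilon}F'_2(u_n)u_n\le C\int_{\Lambda_\varepsilon}|u_n|^p$ by $(P_2)$. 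Boundedness of $(u_n)$ in $L^{2^*}(\mathbb{R}^N)$ via $X_\varepsilon\hookrightarrow H^1(\mathbb{R}^N)$ and Hölder interpolation between $L^2$ and $L^{2^*}$ (with $p\in(2,2^*)$) then force $\int_{\Lambda_\varepsilon}|u_n|^2\ge c_0>0$. Revisiting the computation of $\Psi'_\varepsilon(u)u$ carried out inside the proof of Proposition \ref{20} (where every contribution coming from $\Lambda_\varepsilon^c$ was shown to have the \emph{sign} that helps, via $(h_3)$ and the explicit form of $\overline F'_2$), one reads off $-\Psi'_\varepsilon(u_n)u_n\ge \int_{\Lambda_\varepsilon}|u_n|^2\ge c_0$, delivering the desired uniform lower bound.

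Testing the relation $J'_\varepsilon(u_n)-\lambda_n\Psi'_\varepsilon(u_n)=o_n(1)$ against $u_n$ and using $J'_\varepsilon(u_n)u_n=0$, $\|u_n\|_\varepsilon\le M$, and the uniform bound just obtained, gives $\lambda_n=o_n(1)$. Since $\Psi'_\varepsilon(u_n)$ is bounded in $X_\varepsilon^{*}$ (the extra terms added to $J_\varepsilon$ to form $\Psi_\varepsilon$ have polynomial growth controlled by $(A_1)$-$i)$ and $(P_2)$, hence are continuous on the bounded set $\{u_n\}$), we conclude that $J'_\varepsilon(u_n)\to 0$ in $X_\varepsilon^{*}$. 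Thus $(u_n)$ is a $(PS)_c$ sequence for the unconstrained functional $J_\varepsilon$, and Corollary \ref{12} gives a convergent subsequence. The main obstacle is the third step: producing the uniform lower bound on $\int_{\Lambda_\varepsilon}|u_n|^2$ without merely appealing to weak limits (which would be circular), and this is precisely where the choices $\inf V+1>2b_0$ and the penalization outside $\Lambda$ become essential.
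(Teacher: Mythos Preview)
Your proposal is correct and follows essentially the same approach as the paper: boundedness from the Nehari identity as in Lemma \ref{9}, then $\lambda_n\to 0$ via the sign computation of $\Psi'_\varepsilon(u)u$ from Proposition \ref{20}, then Corollary \ref{12}. The only cosmetic difference is that the paper obtains $\lambda_n\to 0$ by contradiction (assuming $|\Psi'_\varepsilon(u_n)u_n|=o_n(1)$ forces $\int_{\Lambda_\varepsilon}|u_n|^2\to 0$, hence $\int_{\Lambda_\varepsilon}|u_n|^p\to 0$, hence $u_n\to 0$ in $X_\varepsilon$, contradicting Proposition \ref{21}), whereas you establish the positive lower bound $-\Psi'_\varepsilon(u_n)u_n\ge\int_{\Lambda_\varepsilon}|u_n|^2\ge c_0$ directly; these are contrapositives of the same interpolation argument.
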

	\begin{proof}
		Let $(u_n)$ be an arbitrary $(PS)_c$ sequence for $J_\varepsilon|_{\mathcal{N}_\varepsilon}$. Then,
		$$J_\varepsilon(u_n) \rightarrow c \quad \mbox{and} \quad J'_\varepsilon(u_n)=\lambda_n\Psi'_\varepsilon(u_n)+o_n(1),$$
		for some sequence of real numbers $(\lambda_n)$. Taking into account that $J_\varepsilon(u_n)\rightarrow c$ and $J'_\varepsilon(u_n)u_n=0$, repeating the same reasoning of the proof of Lemma \ref{9}, one has that $(u_n)$ is a bounded sequence. By Corollary \ref{12}, it suffices to show that $(u_n)$ is a $(PS)_c$ sequence for $J_\varepsilon$. Aiming this fact, we will prove that 
		\begin{equation}\label{19}
		\lambda_n \rightarrow 0.
		\end{equation}
		 Note that $(u_n)$ satisfies 
		 $$
		 0=J'_\varepsilon(u_n)u_n=\lambda_n\Psi'_\varepsilon(u_n)u_n+o_n(1).
		 $$
		Arguing as in the proof of Proposition \ref{20}, it is possible to show that if $|\Psi'_\varepsilon(u_n)u_n|=o_n(1)$, then
		 $$\int_{\Lambda_\varepsilon}|u_n|^2\leq o_n(1)\Rightarrow \int_{\Lambda_\varepsilon}|u_n|^2 = o_n(1).$$
		This combined with the boundedness of $(u_n)$ leads to 
		 $$\int_{\Lambda_\varepsilon}|u_n|^p= o_n(1).$$
		 Consequently,
		 $$||u_n||_{H_\varepsilon}^2+\int_{\mathbb{R}^N}F'_1(u_n)u_n=\int_{\Lambda_\varepsilon}F'_2(u_n)u_n+\int_{\Lambda_\varepsilon^{c}}G'_2(\varepsilon x, u_n)\leq o_n(1)+b_0\int_{\mathbb{R}^N}|u_n|^2,$$
		 which combines with (\ref{2}) to give 
		 $$
		 \int_{\mathbb{R}^{N}}(|\nabla u_n|^2+(V(\varepsilon x)+1)|u_n|^2)+\int_{\mathbb{R}^N}F_1(u_n) \leq o_n(1). 
		 $$
		 The above inequality implies that $u_n \to 0$ in $X_\varepsilon$, which contradicts Proposition \ref{20}. Thereby, (\ref{19}) is true and the proof is completed.  	 
	\end{proof}	
	\subsection{Existence of positive solution for $(P_\varepsilon)$}
	 For the goals of this section, we will consider the following autonomous problem
	\begin{equation}
	\left\{\begin{aligned}
	-&\Delta u + V_0u =u\log u^2,\;\;\mbox{in}\;\;\mathbb{R}^{N},\nonumber \\
	&u \in H^{1}(\mathbb{R}^{N})\cap L^{F_1}(\mathbb{R}^N).
	\end{aligned}
	\right.\leqno{(P_0)}
	\end{equation}
	The energy functional related to the $(P_0)$ is given by
	$$
	J_0(u):=\frac{1}{2}\int_{\mathbb{R}^N}(|\nabla u|^2+(V_0+1)|u|^2)+\int_{\mathbb{R}^N}F_1(u)-\int_{\mathbb{R}^N}F_2(u).
	$$
	It is well known (see \cite{Alves-de Morais, Alves-Ji, Squassina-Szulkin}) that $(P_0)$ has a positive ground state solution $u_0$, which satisfies
	$$c_0:=\inf_{u\in \mathcal{N}_0}J_0(u)=J_0(u_0),$$
	where $\mathcal{N}_0$ is the Nehari set associated with $J_0$, i.e.,
	$$\mathcal{N}_0:=\left\{u\in H^{1}(\mathbb{R}^{N})\cap L^{F_1}(\mathbb{R}^N);\,J_0(u)=\frac{1}{2}\int_{\mathbb{R}^N}|u|^2\right\}.
	$$
	Hereafter, we fix
	\begin{equation} \label{X}
	X=\left(H^{1}(\mathbb{R}^{N})\cap L^{F_1}(\mathbb{R}^N),\,\,(||\cdot||_{H^1(\mathbb{R}^N)}+||\cdot||_{L^{F_1}(\mathbb{R}^N)})\right),
	\end{equation}
	where $||\cdot||_{H^1(\mathbb{R}^N)}$ denotes the usual norm in $H^1(\mathbb{R}^N)$.
	
	The level $c_0$ can be characterized by 
	$$c_0=\inf_{u\in \mathcal{N}_0}J_0(u)=\inf_{u \in (X-\{0\})}\max_{t \geq 0}J_0(tu).$$
	
	In the next lemma we prove that the solution $u_\varepsilon$ obtained in Theorem \ref{53} is a ground state solution of $(\tilde{S}_\varepsilon)$, and we study the behavior of levels $c_\varepsilon$, as $\varepsilon \rightarrow 0^+$. By a ground state solution we mean a solution of least energy of $(\tilde{S}_\varepsilon)$, that is, a solution verifying
	$$\displaystyle{\inf_{u\in\mathcal{N}_\varepsilon}}J_\varepsilon(u)=J_\varepsilon(u_\varepsilon).$$
	
	\begin{lemma}\label{29}
		The following properties hold:
		\begin{itemize}
			\item [$i)$] There is $\gamma_0>0$ such that $c_\varepsilon\geq \gamma_0$ for all $\varepsilon>0$.
			\item [$ii)$] $c_\varepsilon=\displaystyle{\inf_{u\in\mathcal{N}_\varepsilon}}J_\varepsilon(u)$ for all $\varepsilon >0$.
			\item [$iii)$] $\displaystyle{\limsup_{\varepsilon\rightarrow 0}}\,\,c_\varepsilon \leq c_0$.
		\end{itemize}
	\end{lemma}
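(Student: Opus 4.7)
The plan is to establish the three items in sequence, with (ii) depending on (i).

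\emph{Item (i).} I would revisit Lemma \ref{16}(i) and observe that its constants $r,\rho>0$ can be chosen independently of $\varepsilon$. Indeed, by $(V_1)$ one has $V(\varepsilon x)+1 \geq \inf_{\R^N} V + 1 > 0$ uniformly in $\varepsilon, x$, so the embedding $(X_\varepsilon,\|\cdot\|_\varepsilon) \hookrightarrow L^p(\R^N)$ holds with an $\varepsilon$-free constant, and the estimate $J_\varepsilon(u)\geq C\|u\|_\varepsilon^2 - D\|u\|_\varepsilon^p$ then yields uniform $r,\rho$. Since every $\gamma \in \Gamma_\varepsilon$ starts at $0$ and terminates at a point of negative energy, it must cross $\{\|u\|_\varepsilon=r\}$, and I conclude $c_\varepsilon \geq \rho =: \gamma_0$.

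\emph{Item (ii).} For $c_\varepsilon \leq \inf_{\mathcal{N}_\varepsilon} J_\varepsilon$, given $u \in \mathcal{N}_\varepsilon$ I would use Lemma \ref{27} (so $u \in O_\varepsilon$ and $t=1$ is the unique maximum of $t\mapsto J_\varepsilon(tu)$) together with the fibering argument from Lemma \ref{16}(ii) to reparametrize the ray $s\mapsto sTu$ ($s\in[0,1]$, $T>1$ large with $J_\varepsilon(Tu)<0$) as a path in $\Gamma_\varepsilon$, obtaining $\max_s J_\varepsilon(sTu) = J_\varepsilon(u)$. For the reverse inequality I would argue by the intermediate value theorem: on any $\gamma \in \Gamma_\varepsilon$, the continuous map $\phi(t) := J'_\varepsilon(\gamma(t))(\gamma(t))$ is strictly positive for small $t>0$ with $\gamma(t)\neq 0$ (using $F'_1(u)u\geq 0$, $(A_1)(i)$, and $b_0 < (1+\inf V)/2$), while at $t=1$ identity \eqref{6} and $(A_1)(iv)$ give $\phi(1) \leq 2J_\varepsilon(\gamma(1)) < 0$. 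A sign change at some $t^\ast \in (0,1)$ with $\gamma(t^\ast)\neq 0$ then places $\gamma(t^\ast)$ in $\mathcal{N}_\varepsilon$, yielding $\max_t J_\varepsilon(\gamma(t)) \geq \inf_{\mathcal{N}_\varepsilon} J_\varepsilon$.

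\emph{Item (iii).} Given $\mu>0$, I would first select $\phi\in C_c^\infty(\R^N)$ with $\max_{t\geq 0} J_0(t\phi) < c_0 + \mu/2$; this is possible by approximating a ground state $u_0$ of $(P_0)$ in the space $X$ by $C_c^\infty$ functions and invoking continuity of the Nehari projection on $\mathcal{N}_0$ (cf.\ \cite{Alves-de Morais, Alves-Ji, Squassina-Szulkin}). For $\varepsilon$ small enough, $\text{supp}(\phi) \subset \Lambda_\varepsilon$, hence $g_2(\varepsilon x,\phi) \equiv F'_2(\phi)$ on that set and the only $\varepsilon$-dependence of $J_\varepsilon(t\phi)$ enters through $V(\varepsilon x) \to V_0$ uniformly on the compact support of $\phi$. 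Let $t_\varepsilon$ and $t_\phi$ be the Nehari scalars with $t_\varepsilon\phi\in\mathcal{N}_\varepsilon$ and $t_\phi\phi\in\mathcal{N}_0$ (Lemma \ref{27}). Continuous dependence of the Nehari equation on $\varepsilon$, combined with uniqueness, gives $t_\varepsilon\to t_\phi$ and $J_\varepsilon(t_\varepsilon\phi)\to J_0(t_\phi\phi)\leq c_0+\mu/2$. By (ii), $c_\varepsilon\leq J_\varepsilon(t_\varepsilon\phi)$, so $\limsup_{\varepsilon\to 0^+} c_\varepsilon \leq c_0 + \mu/2$, and letting $\mu \to 0$ concludes.

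The main technical obstacle is the convergence $t_\varepsilon\to t_\phi$ in (iii): it requires controlling the Orlicz term $\int F'_1(t\phi)(t\phi)$ jointly in $t$ and $\varepsilon$. I plan to handle this via Dominated Convergence, using Proposition \ref{4} (the $\Delta_2$-condition and the two-sided bound \eqref{In}) together with the compact support of $\phi$, after which the uniqueness part of Lemma \ref{27} reduces the passage to the limit to a single scalar equation with a unique positive root.
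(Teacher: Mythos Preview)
Your argument is correct; items (i) and (iii) essentially match the paper (the paper truncates the ground state $u_0$ by a cutoff $\phi_R$ rather than approximating by a generic $C_c^\infty$ function, but the mechanism is the same, and it proves the boundedness of $t_\varepsilon$ by a direct contradiction computation rather than invoking uniqueness/continuous dependence of the Nehari root --- both routes work once the test function has compact support inside $\Lambda_\varepsilon$).

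The one genuine difference is in the \emph{reverse} inequality of (ii). You establish $\inf_{\mathcal N_\varepsilon}J_\varepsilon \le c_\varepsilon$ by an intermediate-value argument showing that every path $\gamma\in\Gamma_\varepsilon$ meets $\mathcal N_\varepsilon$, using $(A_1)(iv)$ and \eqref{6} to force $J'_\varepsilon(\gamma(1))\gamma(1)<0$. The paper instead exploits Theorem \ref{53}: since the mountain pass level $c_\varepsilon$ is already attained at a critical point $u_\varepsilon$, one has $u_\varepsilon\in\mathcal N_\varepsilon$ and hence $\inf_{\mathcal N_\varepsilon}J_\varepsilon\le J_\varepsilon(u_\varepsilon)=c_\varepsilon$ in one line. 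Your route is self-contained (it would work even without the PS condition and without knowing $c_\varepsilon$ is critical), at the price of a slightly more delicate argument --- in particular you should make explicit why the zero $t^\ast$ of $t\mapsto J'_\varepsilon(\gamma(t))\gamma(t)$ can be chosen with $\gamma(t^\ast)\neq0$ (take $t^\ast=\inf\{t:J'_\varepsilon(\gamma(t))\gamma(t)<0\}$ and note that points just above $t^\ast$ have negative fibering derivative, hence are nonzero, while small nonzero vectors have positive fibering derivative; this rules out $\gamma(t^\ast)=0$). The paper's shortcut buys brevity; your IVT argument buys independence from Corollary \ref{12} and Theorem \ref{53}.
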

	\begin{proof}
		$i)$ Note that
		$$J_\varepsilon(u)\geq \frac{1}{2}\int_{\mathbb{R}^N}(|\nabla u|^2+(\alpha_0+1)|u|^2)+\int_{\mathbb{R}^N}F_1(u)-\int_{\mathbb{R}^N}F_2(u),$$
		with $\alpha_0=\displaystyle{\inf_{\mathbb{R}^N}}\,V$. Arguing as Lemma \ref{16}-$i)$, we find $r_0\approx 0^+$ and $\gamma_0>0$ independent of $\varepsilon$ such that
		$$J_\varepsilon(u)\geq \rho_0,\,\,\,\, \forall u \in X_\varepsilon, ||u||_\varepsilon=r_0.$$
		By the definition of $c_\varepsilon$, we derive $c_\varepsilon\geq \gamma_0$.\\
		$ii)$ By Lemma  \ref{27} we know that $u \in O_\varepsilon$ for each $u \in \mathcal{N}_\varepsilon$. In this way, using the same ideas of Theorem \ref{16}-$ii)$, there is $t_0$ such that $J_\varepsilon(t_0u)<0$. Setting $\eta:[0,1]\longrightarrow X_\varepsilon$ given by $\eta(t):=t(t_0u)$, it follows that $\eta \in \Gamma_\varepsilon$, and so, 
		$$c_\varepsilon\leq \max_{t \in [0,1]} J_\varepsilon(\eta(t))\leq \max_{s \geq 0} J_\varepsilon(su) \leq J_\varepsilon(u).$$
		The above inequality shows that
		$$c_\varepsilon \leq \displaystyle{\inf_{u\in\mathcal{N}_\varepsilon}}J_\varepsilon(u).$$
		The reverse inequality follows by observing that
		$$\displaystyle{\inf_{u\in\mathcal{N}_\varepsilon}}J_\varepsilon(u)\leq J_\varepsilon(u_\varepsilon)=c_\varepsilon.$$
		$iii)$: Let $u_0 \in \mathcal{N}_0$ be a positive ground state solution of $(P_0)$, i.e, 
		$$
		J_0(u_0)=c_0 \quad \mbox{and} \quad J'_0(u_0)=0.
		$$
		
		For each $R>0$, set $\phi_R(x):=\phi(\frac{1}{R}x)$, where $\phi \in C_0^\infty(\mathbb{R}^N)$ is such that $\phi(x)=1$, for $x \in B_1(0)$, and $\phi(x) = 0 $, for $x \in B_2^c(0)$. Then, putting $u_R:=\phi_Ru_0$, it is easy to check that
		$$
		u_R \rightarrow u_0\,\,\, \text{in}\,\,\, H^1(\mathbb{R}^N) \quad \mbox{as} \quad R\rightarrow \infty.
		$$
	Since $0\leq u_R\leq u_0$, the Lebesgue Dominated Convergence Theorem ensures that
		$$
		\int_{\mathbb{R}^N}F_1(u_R)\longrightarrow \int_{\mathbb{R}^N}F_1(u_0),\,\,\,\text{as}\,\,\,R\rightarrow \infty.
		$$
By the last two limits we can infer that $u_R\rightarrow u_0$ in $X$.
		
		Given $R>0$, from the definition of $u_R$, one can see that $u_R \in O_\varepsilon$ for each $\varepsilon>0$, since $u_0>0$ and $0\in \Lambda_\varepsilon$. So, thanks to preceding item, we find $t_\varepsilon>0$ in such way that 
		$$c_\varepsilon\leq \max_{t \geq 0}J_\varepsilon(tu_R)=J_\varepsilon(t_\varepsilon u_R).$$
		Our next step is to show that, for some $\varepsilon_0>0$, the family $(t_\varepsilon)_{0<\varepsilon<\varepsilon_0}$ is bounded. In fact, as $t_\varepsilon u_R \in \mathcal{N}_\varepsilon$, 
		$$\int_{\mathbb{R}^N}(|\nabla u_R|^2+(V(\varepsilon x)+1)|u_R|^2)=\frac{1}{t_\varepsilon}\int_{\Lambda_\varepsilon}F'_2(t_\varepsilon u_R)u_R+\frac{1}{t_\varepsilon}\int_{\Lambda_\varepsilon^c}\tilde{F}'_2(t_\varepsilon u_R)u_R-\frac{1}{t_\varepsilon}\int_{\mathbb{R}^N}F'_1(t_\varepsilon u_R)u_R.$$
		Considering that $u_R\equiv 0$ in $B^c_{2R}(0)$ and $V(\varepsilon x) \rightarrow V(0)=V_0$, we have
		$$
		\int_{\mathbb{R}^N}(|\nabla u_R|^2+(V(\varepsilon x)+1)|u_R|^2)\longrightarrow \int_{\mathbb{R}^N}(|\nabla u_R|^2+(V_0+1)|u_R|^2),
		$$
		as $\varepsilon \rightarrow 0$, for each $R>0$. On the other hand, if $t_\varepsilon \rightarrow \infty$ as $\varepsilon \rightarrow 0$, the following claim holds:
		\begin{claim}\label{57}
		$$
		\left(\frac{1}{t_\varepsilon}\int_{\Lambda_\varepsilon}F'_2(t_\varepsilon u_R)u_R+\frac{1}{t_\varepsilon}\int_{\Lambda_\varepsilon^c}\tilde{F}'_2(t_\varepsilon u_R)u_R-\frac{1}{t_\varepsilon}\int_{\mathbb{R}^N}F'_1(t_\varepsilon u_R)u_R\right)\,\,\,\longrightarrow\,\,\,\infty.
		$$
		\end{claim}
		First of all, the limit  $\chi_{\Lambda_\varepsilon}(x)\rightarrow 1$ as $\varepsilon \rightarrow 0^+$ together with $(A_1)$ guarantees that
		$$\frac{1}{t_\varepsilon}\int_{\Lambda_\varepsilon^c}\tilde{F}'_2(t_\varepsilon u_R)u_R=o_\varepsilon(1).$$
		Thereby, in order to get the Claim \ref{57}, it suffices to show that
		$$A_\varepsilon:=\left(\frac{1}{t_\varepsilon}\int_{\Lambda_\varepsilon}F'_2(t_\varepsilon u_R)u_R-\frac{1}{t_\varepsilon}\int_{\mathbb{R}^N}F'_1(t_\varepsilon u_R)u_R\right)\,\,\,\longrightarrow\,\,\,\infty.$$
		Observe that, by (\ref{3}),
		$$\begin{aligned} A_\varepsilon=&\int_{\mathbb{R}^N}|u_R|^2+\int_{\mathbb{R}^N}|u_R|^2\log (t_\varepsilon|u_R|)^2-\frac{1}{t_\varepsilon}\int_{\Lambda_\varepsilon^c}F'_2(t_\varepsilon u_R)u_R=\\
		=&\log (t_\varepsilon)^2\int_{\mathbb{R}^N}|u_R|^2-\frac{1}{t_\varepsilon}\int_{\Lambda_\varepsilon^c}F'_2(t_\varepsilon u_R)u_R+C_R,
		\end{aligned}
		$$
		with $C_R=\displaystyle{\int_{\mathbb{R}^N}}(|u_R|^2+|u_R|^2\log |u_R|^2)$. From the definition of $F_2$, 
		$$
		\frac{1}{t_\varepsilon}F'_2(t_\varepsilon u_R)u_R=u_R^2\log (t_\varepsilon|u_R|)^2-\log\delta^2u_R^2+\frac{2\delta}{t_\varepsilon}u_R-2u_R^2,
		$$
		and so,
		$$
		\frac{1}{t_\varepsilon}\int_{\Lambda_\varepsilon^c}F'_2(t_\varepsilon u_R)u_R\leq\int_{\Lambda_\varepsilon^c}u_R^2\log (t_\varepsilon|u_R|)^2+\frac{2\delta}{t_\varepsilon}\int_{\mathbb{R}^N}u_R+B_R,
		$$
		with $B_R:=-\log\delta^2\displaystyle{\int_{\mathbb{R}^N}}u_R^2$. From this and using that $t_\varepsilon \rightarrow \infty$ as $\varepsilon \rightarrow 0$, one finds 
		$$
		A_\varepsilon\geq\log (t_\varepsilon)^2\int_{\mathbb{R}^N}|u_R|^2-\int_{\Lambda_\varepsilon^c}u_R^2\log (t_\varepsilon|u_R|)^2+o_
		\varepsilon(1)+D_R,
		$$
		where $D_R=C_R-B_R$. Therefore,
		$$A_\varepsilon\geq\log (t_\varepsilon)^2\int_{\Lambda_\varepsilon}|u_R|^2-\int_{\Lambda_\varepsilon^c}u_R^2\log |u_R|^2+o_
		\varepsilon(1)+D_R,
		$$
	from where it follows that
		$$A_\varepsilon\rightarrow \infty\,\,\,\text{as}\,\,\,\varepsilon \rightarrow 0^+,$$
		showing the Claim \ref{57}.
		
		As a byproduct of the Claim \ref{57}, we get that $(t_\varepsilon)_{0<\varepsilon<\varepsilon_0}$ is bounded, for some $\varepsilon_0>0$. Now, take $t_R>0$ such that $J_0(t_Ru_R)=\max_{t \geq 0}J_0(tu_R)$. Note that
		$$J_\varepsilon(t_\varepsilon u_R)-J_0(t_\varepsilon u_R)=\frac{t_\varepsilon^2}{2}\int_{\mathbb{R}^N}(V(\varepsilon x)-V_0)|u_R|^2+\int_{\Lambda_\varepsilon^c}(F_2(t_\varepsilon u_R)-\tilde{F}_2(t_\varepsilon u_R)).$$
		Using that $u_R$ has compact support, $u_R\rightarrow u_0$ in $X$ as $R\rightarrow \infty$ and the Lebesgue's Dominated Convergence Theorem, we arrive at
		$$J_\varepsilon(t_\varepsilon u_R)-J_0(t_\varepsilon u_R)=o_\varepsilon(1),$$
		\begin{equation}\label{28}
		\limsup_{\varepsilon\rightarrow 0}\,c_\varepsilon \leq \limsup_{\varepsilon\rightarrow 0}\,J_\varepsilon(t_\varepsilon u_R)\leq J_0(t_R u_R).
		\end{equation}
		The choose of $t_R$ gives $t_R \rightarrow 1$ (see \cite[Lemma 3.7]{Alves-Ji}), and then, 
		$$J_0(t_R u_R)\rightarrow J_0(u_0)=c_0,\,\,\,\text{as}\,\,\,R\rightarrow \infty.$$
		The result is a direct consequence of the  limit above and (\ref{28}).	
	\end{proof}	
	
Now, we are ready to prove the existence of positive ground state solution for $(\tilde{S}_\varepsilon)$.
	
	\begin{proposition}\label{54}
		Given $\varepsilon>0$ the problem $(\tilde{S}_\varepsilon)$ has a positive ground state solution.
	\end{proposition}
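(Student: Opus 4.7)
The plan is to combine the existence result from Theorem \ref{53} with the Nehari characterization of the mountain pass level to obtain a ground state, and then to symmetrize it and invoke a strong maximum principle to ensure positivity. First, let $u_\varepsilon$ be the nontrivial critical point of $J_\varepsilon$ produced by Theorem \ref{53}. Since $J_\varepsilon'(u_\varepsilon)=0$ and $u_\varepsilon\neq 0$, we have $u_\varepsilon\in\mathcal{N}_\varepsilon$, so Lemma \ref{29}$(ii)$ gives
$$
J_\varepsilon(u_\varepsilon)=c_\varepsilon=\inf_{\mathcal{N}_\varepsilon}J_\varepsilon,
$$
and $u_\varepsilon$ is a ground state of $(\tilde{S}_\varepsilon)$.

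Next, I would produce a nonnegative ground state by symmetrization. The quadratic part of $J_\varepsilon$ depends on $u$ only through $|\nabla u|$ and $|u|$; by $(P_1)$ the function $F_1$ is even, and by the odd extension $g_2(x,t)=-g_2(x,-t)$ introduced before $(A_1)$, the primitive $G_2(x,\cdot)$ is even as well. Consequently $J_\varepsilon(|u|)=J_\varepsilon(u)$ and $J_\varepsilon'(|u|)|u|=J_\varepsilon'(u)u$ for every $u\in X_\varepsilon$; in particular $|u_\varepsilon|\in\mathcal{N}_\varepsilon$ and $J_\varepsilon(|u_\varepsilon|)=c_\varepsilon$. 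Thus $|u_\varepsilon|$ minimizes $J_\varepsilon$ on the $C^1$-manifold $\mathcal{N}_\varepsilon$ (Proposition \ref{20}), hence is a critical point of $J_\varepsilon|_{\mathcal{N}_\varepsilon}$ by the standard Lagrange multiplier argument, and Proposition \ref{47} then promotes it to a critical point of $J_\varepsilon$ on $X_\varepsilon$. Writing $u^*_\varepsilon:=|u_\varepsilon|$, we obtain a nonnegative weak solution of $(\tilde{S}_\varepsilon)$ attaining the ground state level $c_\varepsilon$.

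The final step is to upgrade $u^*_\varepsilon\geq 0$ to $u^*_\varepsilon>0$ on $\mathbb{R}^N$. As already noted in the introduction, the inequality $|t\log t^2|\leq C(1+|t|^p)$ with $p\in(2,2^*)$ together with the growth estimate $(A_1)$-$i)$ for $g_2$ places us in a subcritical setting, so standard elliptic regularity shows that $u^*_\varepsilon$ is a classical solution of $(\tilde{S}_\varepsilon)$. On the set $\Lambda_\varepsilon$ the equation can be recast as $-\Delta u^*_\varepsilon+(V(\varepsilon x)-\log(u^*_\varepsilon)^2)u^*_\varepsilon=0$, while outside $\Lambda_\varepsilon$ the nonlinearity reduces to a subcritical polynomial term dominated by $b_0 u^*_\varepsilon$; a Harnack-type argument exactly as in \cite{Alves-Ji,Alves-de Morais} then rules out interior zeros and yields $u^*_\varepsilon>0$ on $\mathbb{R}^N$.

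The delicate point is the positivity step, since the logarithmic nonlinearity is singular precisely at the would-be zeros of $u^*_\varepsilon$. Its sign is, however, favorable: near any point where $u^*_\varepsilon$ is small, the effective linear coefficient $V(\varepsilon x)-\log(u^*_\varepsilon)^2$ diverges to $+\infty$, which is exactly the configuration in which the strong maximum principle (in its Harnack formulation) applies to a continuous nonnegative supersolution, and this is what ultimately guarantees that the nonnegative ground state produced by symmetrization has no interior zero.
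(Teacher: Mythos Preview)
Your argument is correct and reaches the same conclusion, but the route to nonnegativity differs from the paper's. You replace $u_\varepsilon$ by $|u_\varepsilon|$, observe that evenness of $F_1$ and $G_2(x,\cdot)$ keeps $|u_\varepsilon|\in\mathcal{N}_\varepsilon$ with the same energy, and then invoke the $C^1$-manifold structure of $\mathcal{N}_\varepsilon$ (Proposition~\ref{20}) plus Proposition~\ref{47} to conclude that this minimizer is a free critical point. The paper instead argues that the ground state $u_\varepsilon$ already has a sign: if both $u_\varepsilon^+$ and $u_\varepsilon^-$ were nontrivial, each would lie on $\mathcal{N}_\varepsilon$ (since $J_\varepsilon'(u_\varepsilon)u_\varepsilon^\pm=0$), and the splitting $J_\varepsilon(u_\varepsilon)=J_\varepsilon(u_\varepsilon^+)+J_\varepsilon(u_\varepsilon^-)\geq 2c_\varepsilon$ would contradict $J_\varepsilon(u_\varepsilon)=c_\varepsilon$; oddness of the nonlinearity then lets one assume $u_\varepsilon\geq 0$ directly. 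The paper's path is slightly more economical in that it avoids the Lagrange-multiplier machinery and works with $u_\varepsilon$ itself, while yours has the mild advantage of not needing to check that $u_\varepsilon^\pm\in\mathcal{N}_\varepsilon$. For positivity, the paper cites V\'azquez's strong maximum principle \cite{Vazquez} rather than a Harnack formulation, but this is essentially the same mechanism you describe.
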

	\begin{proof}
		Let $u_\varepsilon$ be the solution of $(\tilde{S}_\varepsilon)$ given in Theorem \ref{53}. For $v\in X_\varepsilon$, set $v^+:=\max\{v,0\}$ and $v^-:=\max\{0,-v\}$. Therefore, either $u_\varepsilon^+=0$ or $u_\varepsilon^-=0$, otherwise we would have
		$u_\varepsilon^+$, $u_\varepsilon^- \in \mathcal{N}_\varepsilon$ and $J_\varepsilon(u_\varepsilon)=J_\varepsilon(u_\varepsilon^+)+J_\varepsilon(u_\varepsilon^-)\geq 2c_\varepsilon$, which contradicts $J_\varepsilon(u_\varepsilon)=c_\varepsilon$. Thereby, since $g$ is odd, we may assume that $u_\varepsilon$ is a nonnegative solution of $(\tilde{S}_\varepsilon)$. 		
		By an analogous reasoning as used in the proof of \cite[Theorem 3.1]{Alves-Ji} and \cite[Section 3.1]{d'Avenia}, using a suitable version of maximum principle (\cite[Theorem 1]{Vazquez}), we deduce that $u_\varepsilon$ is positive in whole $\R^N$. 
	\end{proof}
	
	Our next result improves \cite[Lemma 3.9]{Alves-Ji} and it is an essential step in order to get a solution for $(S_\varepsilon)$. 
	
	\begin{lemma}\label{38}
		Let $(u_n)$ be a nonnegative sequence with $u_n\in X_{\varepsilon_n}$, $J_{\varepsilon_n}(u_n)= c_{\varepsilon_n}$, $J'_{\varepsilon_n}(u_n)=0$ and $\varepsilon_n \rightarrow 0$. Then, there exits a sequence $(y_n) \subset \mathbb{R}^N$ such that $w_n(x):=u_n(x+y_n)$ has a convergent subsequence, $\sup_{n\in \mathbb{N}}||w_n||_\infty<\infty$
		and
	\begin{equation} \label{39}
			w_n(x) \rightarrow 0\,\,\,\,\text{as}\,\,\,\,|x|\rightarrow \infty \quad  \mbox{uniformly in} \quad n\in\mathbb{N}.
	\end{equation}
		Furthermore, for some $y_0 \in \Lambda$, the following limit holds $\displaystyle \lim_{n \to +\infty}(\varepsilon_n y_n) = y_0$. 
		\end{lemma}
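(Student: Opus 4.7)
The plan is a concentration-compactness argument combined with the energy estimates of Lemma \ref{29} and standard elliptic regularity, proceeding in four steps: uniform boundedness and extraction of a concentration sequence $(y_n)$; identification of a nontrivial weak-limit profile $w$; localization of $y_0:=\lim \varepsilon_n y_n$ inside $\Lambda$ with $V(y_0)=V_0$; and uniform $L^\infty$-bound together with uniform decay at infinity.

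First I would prove that $(u_n)$ is bounded in $H^1(\mathbb{R}^N)\cap L^{F_1}(\mathbb{R}^N)$ with constants independent of $n$: Lemma \ref{29}(iii) gives $c_{\varepsilon_n}\le c_0+o(1)$, and $(V_1)$ provides the uniform coercivity $V(\varepsilon_n x)+1\ge \alpha_0+1>0$, so the argument of Lemma \ref{9} adapts uniformly. Proposition \ref{21} together with the subcritical estimate (\ref{17}) in its proof yields $\|u_n\|_p\ge M>0$, and Lions' lemma furnishes $y_n\in\mathbb{R}^N$ and $R,\eta>0$ with
$$\liminf_n\int_{B_R(y_n)}|u_n|^2\ge \eta.$$
Setting $w_n(x):=u_n(x+y_n)$, the translated function satisfies
$$-\Delta w_n+(V(\varepsilon_n x+\varepsilon_n y_n)+1)w_n+F_1'(w_n)=g_2(\varepsilon_n x+\varepsilon_n y_n,w_n),$$
inherits the uniform bound by translation invariance, and admits a weak limit $w\not\equiv 0$ in $H^1$.

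The heart of the argument is the localization of $y_0$. If $(\varepsilon_n y_n)$ were unbounded or $\varepsilon_n y_n\to y_\infty\notin\overline{\Lambda}$, then on any fixed ball the translated point $\varepsilon_n x+\varepsilon_n y_n$ would eventually lie outside $\Lambda$, so by $(A_1)$-$iii)$ the limit $w$ would satisfy globally
$$-\Delta w+(V(y_\infty)+1-b_0)w+F_1'(w)\le 0;$$
since $b_0$ was chosen so that $\alpha_0+1-b_0>0$, testing with $w$ forces $w\equiv 0$, contradicting non-vanishing. Hence, along a subsequence, $\varepsilon_n y_n\to y_0\in\overline{\Lambda}$. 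Applying Fatou to the nonnegative identity (\ref{6}) (valid by $(A_1)$-$iv)$) and using Lemma \ref{29}(iii),
$$c_{V(y_0)}\le J_{V(y_0)}(w)\le \liminf_n J_{\varepsilon_n}(u_n)=\lim_n c_{\varepsilon_n}\le c_0=c_{V_0},$$
where $c_a$ and $J_a$ denote the ground-state level and functional of the autonomous equation with constant potential $a$. Since $a\mapsto c_a$ is strictly increasing (test $tu_a$ in $J_b$ for $a<b$ and use the minimax characterization of Lemma \ref{29}(ii)), this forces $V(y_0)\le V_0$; combined with $V_0=\inf_\Lambda V$ and $y_0\in\overline{\Lambda}$, it yields $V(y_0)=V_0$. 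Condition $(V_2)$ then excludes $y_0\in\partial\Lambda$, so $y_0\in M\subset\Lambda$.

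Finally, Moser iteration on the translated equation (whose right-hand side grows subcritically in $w_n$ by $(A_1)$-$i)$, with $F_1'(t)t\ge 0$ absorbed on the left) yields $\sup_n\|w_n\|_\infty<\infty$. Outside a sufficiently large ball the nonlinear contributions are dominated by $(c/2)w_n$ with $c:=\alpha_0+1-b_0>0$, so $-\Delta w_n+(c/2)w_n\le 0$ there; comparison with an exponential barrier provides the uniform decay (\ref{39}). The weak convergence, uniform decay, and Corollary \ref{12} applied to the translated functional then deliver strong convergence of a subsequence in the appropriate Banach space. The main obstacle is the localization step: excluding $y_0\notin\overline{\Lambda}$ requires the penalization inequality $(A_1)$-$iii)$ together with the uniform positivity $\alpha_0+1-b_0>0$, while pinning down $V(y_0)=V_0$ depends on the strict monotonicity $a\mapsto c_a$ and the sharp bound of Lemma \ref{29}(iii); condition $(V_2)$ is precisely what closes the argument by excluding $\partial\Lambda$.
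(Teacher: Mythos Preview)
Your overall architecture matches the paper's: uniform boundedness via the estimates of Lemma~\ref{9}, Lions' lemma to locate $(y_n)$, exclusion of $\varepsilon_n y_n\to y_\infty\notin\overline\Lambda$ by the penalization bound $(A_1)$-$iii)$, an energy comparison to force $V(y_0)=V_0$, and then elliptic regularity for the $L^\infty$ bound and decay. However, the displayed chain
\[
c_{V(y_0)}\le J_{V(y_0)}(w)\le \liminf_n J_{\varepsilon_n}(u_n)
\]
has a genuine gap. At this stage $y_0$ may lie on $\partial\Lambda$, so on any fixed ball the translated points $\varepsilon_n x+\varepsilon_n y_n$ need not fall inside $\Lambda$, and the nonlinearity $G_2(\varepsilon_n x+\varepsilon_n y_n,\cdot)$ does \emph{not} converge to $F_2'$. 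Consequently $w$ is not a critical point of $J_{V(y_0)}$ (nor on its Nehari set), so neither inequality in your chain is justified as written. The paper handles this by passing to the weak limit $\chi$ of $\chi_\Lambda(\varepsilon_n\cdot+\varepsilon_n y_n)$ in $L^q_{\mathrm{loc}}$, introducing the interpolated nonlinearity $\tilde G_2'(z,t)=\chi(z)F_2'(t)+(1-\chi(z))\tilde F_2'(t)$ and its functional $\tilde J$, showing $\tilde J'(w)=0$, and then using $\tilde G_2\le F_2$ to get $\tilde J(w)=\max_{t\ge0}\tilde J(tw)\ge\max_{t\ge0}J_{V(y_0)}(tw)\ge c_{V(y_0)}$; Fatou on the nonnegative integrand of $(A_1)$-$iv)$ gives the reverse bound $c_0\ge\tilde J(w)$. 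You need this (or an equivalent) intermediate functional to close the comparison.

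A second, smaller issue: invoking Corollary~\ref{12} for the ``translated functional'' does not give strong convergence, since that corollary and Lemma~\ref{11} are stated for a \emph{fixed} $\varepsilon$, whereas here both the potential $V(\varepsilon_n\cdot+\varepsilon_n y_n)$ and the region $\Lambda_{\varepsilon_n}-y_n$ vary with $n$. The paper obtains $w_n\to w$ in $X$ by first proving $\int_{\Lambda_{\varepsilon_n}-y_n}|w_n|^2\to\int_{\mathbb R^N}|w|^2$ (via the inclusion $B_{r/\varepsilon_n}(0)\subset\Lambda_{\varepsilon_n}-y_n$ once $y_0\in\Lambda$ is known, together with the squeeze $c_0\le \tfrac{t_0^2}{2}\int|w|^2\le\cdots\le c_0$), and then matching the Nehari identities for $w_n$ and $w$. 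Your decay argument also presupposes this strong convergence: the inequality $-\Delta w_n+(c/2)w_n\le0$ outside a large ball requires $w_n$ small there, which is exactly what the uniform tail control from $w_n\to w$ in $H^1$ provides.
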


	\begin{proof}
		To begin with, note that $(u_n)$ is a bounded sequence in the space $X$ given in (\ref{X}). Indeed, by the assumptions and employing Lemma \ref{29}-$iii)$, $(u_n)$ must satisfy
		$$
		J_{\varepsilon_n}(u_n) \leq M_1\,\,\,\,\text{and}\,\,\,\,J'_{\varepsilon_n}(u_n)u_n = 0, \quad \forall n \in \mathbb{N},
		$$
		for some positive $M_1$. By following closely the arguments of Lemma \ref{9}, we find, instead of (\ref{7}),
		$$M_1\geq\frac{1}{2}\int_{\Lambda_{\varepsilon_n}}|u_n|^2.$$
		Hence, by the same ideas explored in the proof of Lemma \ref{9}, there are a $M_1,M_2>0$ such that
		$$\int_{\Lambda_{\varepsilon_n}}|u_n|^2\log |u_n|^2\leq M_2(1+||v_n||_{H_{\varepsilon_n}}^{1+r})$$
		and, 
		$$
		M_1+M_2(1+||v_n||_{H_{\varepsilon_n}}^{1+r})\geq C||u_n||_{H_{\varepsilon_n}}^2+\int_{\Lambda_\varepsilon^c}F_1(u_n)\geq C||u_n||_{H_{\varepsilon_n}}^2, \quad \forall n \in \mathbb{N},
		$$
		for some $C>0$ and $0<r<1$, which shows the boundedness of $(||u_n||_{H_{\varepsilon_n}})$ in $\mathbb{R}$. Now, the conditions on $V$ ensure that $(u_n)$ is bounded in $H^1(\mathbb{R}^N)$. Since
		$$\int_{\mathbb{R}^N}F_1(u_n)=J_{\varepsilon_n}(u_n)-\frac{1}{2}||u_n||_{H_{\varepsilon_n}}^2+\int_{\mathbb{R}^N}G_2(\varepsilon_n x, u_n),$$
		we infer that
		$$\sup_{n\in \mathbb{N}} \int_{\mathbb{R}^N}F_1(u_n)<\infty,$$
	proving the boundedness of $(u_n)$ in $X$. 
		For some $r, \lambda>0$ and a sequence $(y_n)$ it holds
		\begin{equation}\label{30}
		\limsup_{n \to +\infty} \int_{B_r(y_n)}|u_n|^2 \geq \lambda >0.
		\end{equation}
		Otherwise, using a concentration-compactness principle due to Lions (\cite[Lemma 1.21]{Willem}), we would have
		$$u_n \rightarrow 0\,\,\,\text{in}\,\,\, L^p(\mathbb{R}^N) \quad \forall p\in (2,2^*),$$
		then 
		$$\int_{\mathbb{R}^N}G'_2(\varepsilon_n x, u_n)u_n = o_n(1)\,\,\,\text{and}\,\,\,\int_{\mathbb{R}^N}G_2(\varepsilon_n x, u_n)=o_n(1).$$
		From the assumptions in the statement we get $J'_{\varepsilon_n}(u_n)u_n = 0$. This associated with the last equality give 
		$$ o_n(1) = ||u_n||_{H_{\varepsilon_n}}^2+\int_{\mathbb{R}^N}F'_1(u_n)u_n.$$
		The above limit together with (\ref{2}) ensures that 
		$$
		||u_n||_{H_{\varepsilon_n}}^2+\int_{\mathbb{R}^N}F_1(u_n) \to 0,
		$$
		which permits to conclude that $J_{\varepsilon_n}(u_n)=c_{\varepsilon_n} \rightarrow 0$, contradicting Lemma \ref{29}-$i)$.
		
		From now on, set $w_n:=u_n(\cdot+y_n)$. The boundedness of $(u_n)$ and (\ref{30}) yield that $(w_n)$ is a bounded sequence in $X$, and so, we may assume that there is $w \in X -\{0\}$ such that 
		$$w_n\rightharpoonup w\,\,\,\,\text{in}\,\,\,\, X.$$
	 Our next step is proving that $(\varepsilon_n y_n)$ is a bounded sequence in $\mathbb{R}^N$. This fact is a direct consequence of the claim below.
		
		\begin{claim}\label{32}
			 It holds $\displaystyle \lim_{n \to +\infty} d(\varepsilon_n y_n,\,\overline{\Lambda})= 0$, with $d$ being the usual distance between $\varepsilon_n y_n$ and $\overline{\Lambda}$ in $\R^N$.
		\end{claim}
		
		The proof of the claim follows the same ideas of \cite[Claim 3.1]{Alves-Ji}, however for the reader's convenience we will write its proof. Arguing by contradiction, if the claim is not true, there exist some subsequence of $(\varepsilon_n y_n)$, still denoted by itself, and $\gamma >0$ satisfying
		\begin{equation*}
		d(\varepsilon_n y_n,\,\overline{\Lambda})\geq \gamma,\,\,\,\forall n \in \mathbb{N}. 
		\end{equation*}
		Then, for some $r>0$, 
	$$
	B_r(\varepsilon_n y_n) \subset \Lambda^c,\,\,\,\forall n \in \mathbb{N}.
	$$

		Now, we fix  ,for each $j \in \mathbb{N}$, $v_j = \phi_jw$, with $\phi_j$ defined as in Lemma \ref{29}-$iii)$. So, we know that $v_j \rightarrow w$ in $X$. For each $j$ fixed, a simple change of variable leads to 
		\begin{equation}\label{31}
		\int_{\mathbb{R}^N}(\nabla w_n \nabla v_j + (V(\varepsilon_n x+\varepsilon_n y_n))w_nv_j)+\int_{\mathbb{R}^N}F'_1(w_n)v_j=\int_{\mathbb{R}^N}G'_2(\varepsilon_n x, w_n)v_j.
		\end{equation}
		Writing
		$$\int_{\mathbb{R}^N}G'_2(\varepsilon_n x, w_n)v_j=\int_{B_{\frac{r}{\varepsilon_n}}(0)}G'_2(\varepsilon_n x, w_n)v_j + \int_{B^c_{\frac{r}{\varepsilon_n}}(0)}G'_2(\varepsilon_n x, w_n)v_j$$
		and using $(A_1)$, we find
		$$\int_{\mathbb{R}^N}G'_2(\varepsilon_n x, w_n)v_j\leq b_0\int_{B_{\frac{r}{\varepsilon_n}}(0)}w_nv_j + \int_{B^c_{\frac{r}{\varepsilon_n}}(0)}F'_2(w_n)v_j,$$
		and so
		$$\int_{\mathbb{R}^N}(\nabla w_n \nabla v_j + Cw_nv_j)+\int_{\mathbb{R}^N}F'_1(w_n)v_j \leq \int_{B^c_{\frac{r}{\varepsilon_n}}(0)}F'_2(w_n)v_j,$$
		for a  convenient $C>0$. Since $v_j$ has compact support, one can sees that
		$$
		\int_{B^c_{\frac{r}{\varepsilon_n}}(0)}F'_2(w_n)v_j \longrightarrow 0 \quad \mbox{as} \quad n \rightarrow \infty.
		$$ 
		By using that $w_n \rightharpoonup w$ in $X$, we firstly take the limit of $n \rightarrow \infty$ and after the limit of $j \rightarrow \infty$ to get the inequality below 
		$$\int_{\mathbb{R}^N}(|\nabla w|^2+ C|w|^2)+\int_{\mathbb{R}^N}F'_1(w)w\leq 0,$$
		which yields $w=0$. This contradiction proves the claim.
		
		The preceding claim ensures that, going to a subsequence if necessary, $\varepsilon_n y_n \rightarrow y_0 \in \overline{\Lambda}$ for some $y_0$. Actually, we will prove that $y_0 \in \Lambda$. To this aim, note that for each $R>0$ the sequence $\chi_n(x):=\chi_\Lambda(\varepsilon_n x+\varepsilon_n y_n)$ is a bounded sequence in $L^q(B_R(0))$, for any $q \in [2,\infty)$. Since $L^q(B_R(0))$ is a reflexive space for all $q \in [2,\infty)$, then there exists a function $\chi_R \in L^q(B_R(0))$ such that
		$$\chi_n\rightharpoonup\chi_R\,\,\,\text{in}\,\,\,L^q(B_R(0)).$$
		The reader is invited to note that, given positive numbers $0<R_1<R_2$, the functions $\chi_{R_1}$ and $\chi_{R_2}$ obtained in the same way of $\chi_R$ satisfy 
		$$\chi_{R_1}\equiv \chi_{R_2}|_{B_{R_1}(0)}.$$
		Therefore, there is a measurable function $\chi \in L^q_{loc}(\mathbb{R}^N)$ satisfying
		\begin{equation}\label{33}
		\chi_n\rightharpoonup\chi \,\,\,\text{in}\,\,\,L^q(B_R(0)),
		\end{equation}
		for each $R>0$. Note also that $0\leq \chi \leq 1$. 
		
		In the same way of (\ref{31}), for each $\phi \in C_0^\infty(\mathbb{R}^N)$ we have
		$$\int_{\mathbb{R}^N}(\nabla w_n \nabla \phi + (V(\varepsilon_n x+\varepsilon_n y_n)+1)w_n\phi)+\int_{\mathbb{R}^N}F'_1(w_n)\phi=\int_{\mathbb{R}^N}G'_2(\varepsilon_n x+\varepsilon_n y_n, w_n)\phi.$$
		By Claim \ref{32} and  (\ref{33}), 
		$$\int_{\mathbb{R}^N}(\nabla w \nabla \phi + (V(y_0)+1)w\phi)+\int_{\mathbb{R}^N}F'_1(w)\phi=\int_{\mathbb{R}^N}\tilde{G}'_2(x, w)\phi,$$
		where
		$$ \tilde{G}'_2(z, t):= \chi(z)F'_2(t)+(1-\chi(z))\tilde{F}'_2(t).$$
	It is easy to check that $\tilde{G}'_2$ satisfies
		$$\tilde{G}'_2(z, t)\leq C(|t|+|t|^{p-1}),$$
		where $p \in (2,2^*)$. Moreover, the map $ t\longmapsto \frac{\tilde{G}'_2(z, t)}{t}$, for $t> 0$, is an nondecreasing function.
		
		The above arguments guarantee that $\tilde{J}'(w)=0$, where $\tilde{J}:X\longrightarrow \mathbb{R}$ is the functional given by
		$$
		\tilde{J}(u):=\frac{1}{2}\int_{\mathbb{R}^N}(|\nabla u|^2+(V(y_0)+1)|u|^2)+\int_{\mathbb{R}^N}F_1(u)-\int_{\mathbb{R}^N}\tilde{G}_2(x,u),
		$$
		and $\tilde{G}_2(x,u):=\displaystyle{\int_{0}^{t}}\tilde{G}'_2(x,s)\,ds$. Next, we set 
		$$J_{V(y_0)}:=\frac{1}{2}\int_{\mathbb{R}^N}(|\nabla u|^2+(V(y_0)+1)|u|^2)+\int_{\mathbb{R}^N}F_1(u)-\int_{\mathbb{R}^N}F_2(u) \quad \forall u \in X, $$
		
		$$ \mathcal{M}_0:=\left\{u \in X-\{0\};\,J'_{V(y_0)}(u)u=0\right\}$$
		and
		$$c_{V(y_0)}=\inf_{u \in \mathcal{M}_0} J_{V(y_0)}(u)=\inf_{u\in X-\{0\}}\left\{\max_{t\geq 0}J(tw)\right\}.$$
		Define also $\Sigma_0:=\text{supp}\chi$ and $\mathcal{O}_0:=\{u \in X_\varepsilon;\, |\text{supp}(|u|)\cap\Sigma_0|>0\}$. Using the same ideas explored in the proof of Lemma \ref{16}, the conditions on $\tilde{G}_2$ allow us to conclude that 
		$$
		\tilde{J}(tv)\rightarrow -\infty,\,\,\,\text{as}\,\,\,t\rightarrow \infty,
		$$
		for each $v \in \mathcal{O}_0$. Since $w\neq 0$ and $\tilde{J}'(w)=0$, we get $w \in \mathcal{O}_0$. Therefore, by standard arguments,
		$$\tilde{J}(w)=\max_{t\geq0}\tilde{J}(tw)\geq\max_{t\geq0}J(tw)\geq c_{V(y_0)}.$$
		In the same way of (\ref{6}), we find by a change of variable,
		$$
		\begin{aligned}
		c_{\varepsilon_n}&={J}_{\varepsilon_n}(u_n)-\frac{1}{2}{J}'_{\varepsilon_n}(u_n)u_n=\\
		&=\frac{1}{2}\int_{\mathbb{R}^N}(|w_n|^2+[F_2(w_n)-\frac{1}{2}F'_2(w_n)w_n+\frac{1}{2}G'_2(\varepsilon_n x+\varepsilon_n y_n,w_n)w_n-G_2(\varepsilon_n x+\varepsilon_n y_n,w_n)]).
		\end{aligned}
	$$
From $(A_1)-iv)$, 
 
	$$
	c_{\varepsilon_n} \geq 
	\frac{1}{2}\int_{B_R(0)}(|w_n|^2+[F_2(w_n)-\frac{1}{2}F'_2(w_n)w_n+\frac{1}{2}G'_2(\varepsilon_n x+\varepsilon_n y_n,w_n)w_n-G_2(\varepsilon_n x+\varepsilon_n y_n,w_n)])
	$$
for each $R>0$. Now, fix $p \in (2,2^*)$. Since $w_n \rightarrow w$ in $L^p(B_R(0))$, the growth conditions on $F'_2$ and $\tilde{F}'_2$ assures that, for some $q \in (p,2^*)$, it holds
		$$\left\{\begin{aligned}
		&F'_2(w_n)w_n \rightarrow F'_2(w)w,\,\,\,\text{in}\,\,\,L^{\frac{q}{p}}(B_R(0));\\
		&\tilde{F}'_2(w_n)w_n \rightarrow \tilde{F}'_2(w)w,\,\,\,\text{in}\,\,\,L^{\frac{q}{p}}(B_R(0)).
		\end{aligned}
		\right.
		$$
		The convergence in (\ref{33}) implies that $\chi_n \rightharpoonup \chi$ in $L^r(B_R(0))$, where $r$ is the conjugate exponent of $q/p$. Gathering these information, 
		$$\chi_nF'_2(w_n)+(1-\chi_n)\tilde{F}'_2(w_n) \longrightarrow \chi F'_2(w)+(1-\chi)\tilde{F}'_2(w) \,\,\,\text{in}\,\,\,L^1(B_R(0)).$$
Now, employing the fact that 
	$$
G'_2(\varepsilon_n x+\varepsilon_n y_n, w_n)=\chi_n(x) F'_2(w_n)+(1-\chi_n(x))\tilde{F}'_2(w_n),
	$$
we conclude that 
		$$G'_2(\varepsilon_n x+\varepsilon_n y_n, w_n)\rightarrow \tilde{G}'_2(x,w) \,\,\,\text{in}\,\,\,L^1(B_R(0)).$$
		Using an analogous reasoning we also derive
		$$G_2(\varepsilon_n x+\varepsilon_n y_n, w_n)\rightarrow \tilde{G}_2(x,w) \,\,\,\text{in}\,\,\,L^1(B_R(0)).$$
		Consequently,  by Fatou's Lemma (recall the inequality in $(A_1)-iv)$) and Lemma \ref{29},
		$$\begin{aligned}
		c_0&\geq\int_{B_R(0)}\left(\frac{1}{2}|w|^2+[F_2(w)-\frac{1}{2}F'_2(w)w+\frac{1}{2}\tilde{G}'_2(x,w)w-\tilde{G}_2(x,w)]\right), \quad \forall R>0.
		\end{aligned}$$
		Letting $R\rightarrow \infty$, one gets
		$$\begin{aligned}
		c_0&\geq\int_{\mathbb{R}^N}\left(\frac{1}{2}|w|^2+[F_2(w)-\frac{1}{2}F'_2(w)w+\frac{1}{2}\tilde{G}'_2(x,w)w-\tilde{G}_2(x,w)]\right)=\\
		&=\tilde{J}(w)-\frac{1}{2}\tilde{J}'(w)w=\tilde{J}(w)\geq c_{V(y_0)}.
		\end{aligned}
		$$
		By the definitions of levels $c_0$ and $c_{V(y_0)}$, the above inequality ensures that $V(y_0)\leq V(0)=\displaystyle{\inf_{x \in \Lambda}} V(x)$. Thus, by $(V_2)$, we must have $V(y_0)=V(0)=V_0$ and
		$y_0 \in \Lambda.$
		
		In order to finish the proof, it remains to prove that 
		$$w_n \longrightarrow w\,\,\,\,\text{in}\,\,\,\,X \quad \mbox{as} \quad n \to +\infty.$$
		Aiming this goal, we will prove the following result
		\begin{claim}\label{36}
			$\displaystyle \lim_{n \to +\infty} \displaystyle{\int_{(\Lambda_{\varepsilon_n}-y_n)}}|w_n|^2=\int_{\mathbb{R}^N}|w|^2.$
		\end{claim}

			Note first that, since $\varepsilon_n y_n\rightarrow y_0 \in \Lambda$, there exists a number $r>0$ such that
			$$ B_r(\varepsilon_n y_n) \subset \Lambda,$$
			for all $n$ large enough. Thereby, 
			$$B_\frac{r}{\varepsilon_n}(0) \subset \Lambda_{\varepsilon_n}-y_n,$$
			for all $n$ large enough, and so, 
			\begin{equation}\label{34}
			\chi_{(\Lambda_{\varepsilon_n}-y_n)}(x)\longrightarrow 1,\,\,\text{a.e.}\,\,\,x\in \mathbb{R}^N. 
			\end{equation}
			Now, note that, by using $\tilde{G}'_2 \leq F'_2$ and that $\tilde{J}'(w)w=0$ we get $J'_{V(y_0)}(w)w\leq 0$, so that $J'_0(w)w\leq 0$, because $V(y_0)=V_0$. Therefore, for some $t_0 \in (0,1]$ it holds $t_0 w\in \mathcal{N}_0$. Then, from (\ref{34}) and Lemma \ref{29}-$iii)$, 
			\begin{equation}\label{35}
			\begin{aligned}
			c_0\leq J_0(t_0w)=\frac{t_0^2}{2}\int_{\mathbb{R}^N}|w|^2\leq\frac{t_0^2}{2}\liminf_{n \to +\infty}\int_{(\Lambda_{\varepsilon_n}-y_n)}|w_n|^2&\leq \frac{t_0^2}{2}\limsup_{n \to +\infty}\int_{(\Lambda_{\varepsilon_n}-y_n)}|w_n|^2\leq\\
			&\leq\frac{t_0^2}{2}\limsup_{n \to +\infty} c_{\varepsilon_n}\leq c_0,
			\end{aligned}
			\end{equation}
			where we have used that 
			$$
			\frac{1}{2}\int_{(\Lambda_{\varepsilon_n}-y_n)}|w_n|^2=\frac{1}{2}\int_{\Lambda_{\varepsilon_n}}|u_n|^2\leq {J}_{\varepsilon_n}(u_n)-\frac{1}{2}{J}'_{\varepsilon_n}(u_n)u_n=c_{\varepsilon_n}.
			$$
			The above computations prove the claim.
			 
			Observe that the sentence in (\ref{35}) also ensures that $t_0=1$, and so, $w \in \mathcal{N}_0$. Using that $J'_{\varepsilon_n}(u_n)u_n=0$, by a change of variable, we find
			\begin{equation}\label{37}
			\begin{aligned}
			\int_{\mathbb{R}^N}(&|\nabla w_n |^2+ (V(\varepsilon_n x+\varepsilon_n y_n)+1)|w_n|^2)+\int_{\mathbb{R}^N}F'_1(w_n)w_n=\\
			&\int_{(\Lambda_{\varepsilon_n}-y_n)}F'_2(w_n)w_n+\int_{(\Lambda_{\varepsilon_n}-y_n)^c}\tilde{F}'_2(w_n)w_n.
			\end{aligned}
			\end{equation}
			By applying Claim \ref{36} and interpolation, 
			$$\chi_{(\Lambda_{\varepsilon_n}-y_n)}w_n \longrightarrow w\,\,\,\,\text{in}\,\,\,\,L^p(\mathbb{R}^N)$$
		and
			$$ \int_{(\Lambda_{\varepsilon_n}-y_n)}F'_2(w_n)w_n=\int_{\mathbb{R}^N}F'_2(w)w+o_n(1).$$
		As $w \in \mathcal{N}_0$ and 
			$$(V(\varepsilon_n x+\varepsilon_n y_n)+1])|w_n|^2-\tilde{F}'_2(w_n)w_n)\geq 0 \quad \mbox{in} \quad (\Lambda_{\varepsilon_n}-y_n)^c,$$
		the equality (\ref{37}) yields that  
			$$\begin{aligned}
			&\int_{\mathbb{R}^N}(|\nabla w |^2+ (V(y_0)+1)|w|^2)+\int_{\mathbb{R}^N}F'_1(w)w\leq\\
			&\leq\liminf\int_{\mathbb{R}^N}\left(|\nabla w_n |^2+ \int_{(\Lambda_{\varepsilon_n}-y_n)}(V(\varepsilon_n x+\varepsilon_n y_n)+1)|w_n|^2)+\int_{\mathbb{R}^N}F'_1(w_n)w_n\right)\leq\\
			&\leq\int_{\mathbb{R}^N}(|\nabla w |^2+ (V_0+1)|w_n|^2)+\int_{\mathbb{R}^N}F'_1(w)w.
			\end{aligned}
			$$
			Taking into account $V(y_0)=V_0$, we derive that
			$$||w_n||_{H^1(\mathbb{R}^N)}^2\rightarrow ||w||_{H^1(\mathbb{R}^N)}^2\,\,\,\text{and}\,\,\,\int_{\mathbb{R}^N}F'_1(w_n)w_n \rightarrow \int_{\mathbb{R}^N}F'_1(w)w.$$
		The above limit together with (\ref{2}) ensure that $w_n \rightarrow w$ in $X$. Finally the boundedness of $(w_n)$ in $L^{\infty}(\Omega)$ and the limit (\ref{39}) follow as in \cite[Lemma 3.10]{Alves-Ji}
		\end{proof}	

		As a direct consequence of the computations made above, see the sentence (\ref{35}), we have the following result
		
		\begin{corollary}\label{44}
			The levels $c_\varepsilon$ satisfies $\displaystyle \lim_{\varepsilon\rightarrow 0}\,c_\varepsilon=c_0.$
		\end{corollary}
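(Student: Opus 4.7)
The plan is to combine two bounds already essentially established in the preceding material: the upper bound $\limsup_{\varepsilon\to 0} c_\varepsilon \leq c_0$ from Lemma \ref{29}-$iii)$, and a matching lower bound extracted from the chain of inequalities in \eqref{35} inside the proof of Lemma \ref{38}.

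For the lower bound, I would argue by sequences. Take any sequence $\varepsilon_n \to 0^+$ and pick, for each $n$, a positive ground state $u_n \in \mathcal{N}_{\varepsilon_n}$ of $(\tilde{S}_{\varepsilon_n})$, which is furnished by Proposition \ref{54}; hence $J_{\varepsilon_n}(u_n)=c_{\varepsilon_n}$ and $J'_{\varepsilon_n}(u_n)=0$. By Lemma \ref{38}, after a translation $y_n$ with $\varepsilon_n y_n \to y_0 \in \Lambda$ and $V(y_0)=V_0$, the sequence $w_n(x):=u_n(x+y_n)$ converges (along a subsequence) to some $w \in \mathcal{N}_0$, and the proof of Lemma \ref{38} shows that the scalar $t_0$ there is forced to be $1$.

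The crucial observation is the chain reproduced in \eqref{35}: since $w \in \mathcal{N}_0$, we have $J_0(w) = \frac{1}{2}\int_{\mathbb{R}^N}|w|^2 \geq c_0$, while the identity
$$\frac{1}{2}\int_{\Lambda_{\varepsilon_n}-y_n}|w_n|^2 = J_{\varepsilon_n}(u_n)-\tfrac{1}{2}J'_{\varepsilon_n}(u_n)u_n = c_{\varepsilon_n},$$
combined with Fatou's lemma applied to the characteristic functions $\chi_{(\Lambda_{\varepsilon_n}-y_n)} \to 1$ a.e. (established in Claim \ref{36} / the limit \eqref{34}) and with $w_n \to w$ in $X$, yields
$$c_0 \leq \frac{1}{2}\int_{\mathbb{R}^N}|w|^2 \leq \liminf_{n\to\infty}\frac{1}{2}\int_{\Lambda_{\varepsilon_n}-y_n}|w_n|^2 = \liminf_{n\to\infty} c_{\varepsilon_n}.$$
Since $\varepsilon_n \to 0^+$ was arbitrary, $\liminf_{\varepsilon\to 0} c_\varepsilon \geq c_0$.

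Putting the two inequalities together gives $\lim_{\varepsilon\to 0} c_\varepsilon = c_0$. There is no real obstacle here: the work has already been done inside Lemma \ref{38}, and this corollary simply extracts the resulting two-sided bound. The only thing to be careful about is that every sequence $\varepsilon_n \to 0^+$ admits ground states $u_n$ to which Lemma \ref{38} applies (granted by Proposition \ref{54}) and that the a.e.\ convergence $\chi_{(\Lambda_{\varepsilon_n}-y_n)}\to 1$ is genuinely uniform in $n$ on compact sets, so that Fatou's lemma is applicable to $\tfrac12|w_n|^2\chi_{(\Lambda_{\varepsilon_n}-y_n)}$.
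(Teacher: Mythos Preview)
Your proposal is correct and follows exactly the paper's approach: the corollary is stated in the paper as a direct consequence of the chain \eqref{35} inside the proof of Lemma~\ref{38}, combined with the upper bound from Lemma~\ref{29}-$iii)$, which is precisely what you do. Two tiny cosmetic points: the last ``$=$'' in your displayed chain should be ``$\leq$'' (the paper only has $\frac{1}{2}\int_{\Lambda_{\varepsilon_n}-y_n}|w_n|^2 \leq c_{\varepsilon_n}$, which is all you need), and since Lemma~\ref{38} yields convergence only along a subsequence, you should first pass to a subsequence realizing $\liminf_n c_{\varepsilon_n}$ before invoking it---but neither affects the validity of the argument.
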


		Finally, we are ready to prove that $(P_\varepsilon)$ has a positive solution for all $\varepsilon$ small enough.
		
		\begin{theorem}\label{50}
			There exists $\varepsilon_0>0$ such that $(S_\varepsilon)$ (and so $(P_\varepsilon)$) has a positive solution $u_\varepsilon \in X_\varepsilon$ for all $\varepsilon \in (0,\varepsilon_0)$.
		\end{theorem}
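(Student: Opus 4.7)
The plan is to argue by contradiction, leveraging the translation/compactness information provided by Lemma \ref{38}. Suppose the conclusion fails: then there exists a sequence $\varepsilon_n \to 0^+$ such that the positive ground state solutions $u_n := u_{\varepsilon_n}$ of $(\tilde{S}_{\varepsilon_n})$, whose existence is guaranteed by Proposition \ref{54}, fail to satisfy the bound (\ref{05}). Thus we may choose points $x_n \in \mathbb{R}^N \setminus \Lambda_{\varepsilon_n}$ with $u_n(x_n) \geq t_1$. Note that $u_n$ is admissible for Lemma \ref{38}: indeed, by Lemma \ref{29}-$ii)$ the ground state property gives $J_{\varepsilon_n}(u_n) = c_{\varepsilon_n}$, and $u_n$ is a critical point of $J_{\varepsilon_n}$ by construction.

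Applying Lemma \ref{38}, I obtain (up to a subsequence) points $y_n \in \mathbb{R}^N$ such that $w_n := u_n(\cdot + y_n)$ converges in $X$, satisfies $\sup_n \|w_n\|_\infty < \infty$, decays uniformly in $n$ as $|x|\to\infty$, and $\varepsilon_n y_n \to y_0 \in \Lambda$. Set $z_n := x_n - y_n$. I first claim that $|z_n| \to \infty$. Otherwise, along a subsequence, $(z_n)$ is bounded, and since $\varepsilon_n \to 0^+$ we get $\varepsilon_n z_n \to 0$, whence $\varepsilon_n x_n = \varepsilon_n z_n + \varepsilon_n y_n \to y_0 \in \Lambda$. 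But $\varepsilon_n x_n \in \mathbb{R}^N \setminus \Lambda$, which is closed, so $y_0 \notin \Lambda$, contradicting $y_0 \in \Lambda$. Hence $|z_n|\to \infty$, and now the uniform decay of $w_n$ yields
\begin{equation*}
t_1 \leq u_n(x_n) = w_n(z_n) \longrightarrow 0,
\end{equation*}
which is the desired contradiction. Consequently, there exists $\varepsilon_0>0$ such that, for every $\varepsilon \in (0,\varepsilon_0)$, the ground state solution $u_\varepsilon$ of $(\tilde{S}_\varepsilon)$ obeys $0 < u_\varepsilon(x) < t_1$ on $\mathbb{R}^N \setminus \Lambda_\varepsilon$. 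By the definition of $g_2$ and the observation following (\ref{05}), $u_\varepsilon$ is then a solution of $(S_\varepsilon)$, and positivity is preserved from Proposition \ref{54}. The change of variables $u(x) = v(x/\varepsilon)$ transports this to a positive solution of $(P_\varepsilon)$ in $X_\varepsilon$.

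The substantive content of the argument has already been assembled in Lemma \ref{38}: the concentration of ground states around a point $y_0 \in \Lambda$ and, crucially, the uniform-in-$n$ decay at infinity of the translated sequence $w_n$. The main obstacle is therefore purely combinatorial at this stage, since the heavy analytic lifting (boundedness in $X$, non-vanishing via Lions' lemma, limit identification using the weak convergence $\chi_{\Lambda_{\varepsilon_n}}(\cdot+y_n) \rightharpoonup \chi$, and the $L^\infty$-bound giving the pointwise decay from \cite[Lemma 3.10]{Alves-Ji}) was carried out there. The only care needed is to verify that the ground state property is preserved in invoking Lemma \ref{38}, which follows immediately from the characterization $c_\varepsilon = \inf_{\mathcal{N}_\varepsilon} J_\varepsilon$.
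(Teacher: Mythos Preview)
Your proof is correct and follows essentially the same approach as the paper: both argue by contradiction, invoke Lemma \ref{38} to obtain the translated sequence $w_n$ with uniform decay and the localization $\varepsilon_n y_n \to y_0 \in \Lambda$, and then derive the contradiction from the fact that any point outside $\Lambda_{\varepsilon_n}$ must be far from $y_n$ (you track a specific violating point $x_n$, while the paper uses the set inclusion $\mathbb{R}^N \setminus \Lambda_{\varepsilon_n} \subset \mathbb{R}^N \setminus B_{r/\varepsilon_n}(y_n)$, but the content is identical).
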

		\begin{proof}
		In what follows, we will prove that
			\begin{equation}\label{40}
			u_\varepsilon(x)< t_1,\,\,\,\forall x \in \mathbb{R}^N-\Lambda_\varepsilon,
			\end{equation}
			for $\varepsilon \in (0,\varepsilon_0)$. Indeed, otherwise there must be sequences $\varepsilon_n \rightarrow 0$ and $(u_{\varepsilon_n})$ such that $J_{\varepsilon_n}(u_{\varepsilon_n})=c_{\varepsilon_n}$ and $J'_{\varepsilon_n}(u_{\varepsilon_n})=0$, but $u_{\varepsilon_n}$ does not satisfy (\ref{40}). By Lemma \ref{38}, there exists a sequence $(y_n)$ in $\mathbb{R}^N$ satisfying $\varepsilon_n y_n \rightarrow y_0$, with $V(y_0)=V_0$. Thus, for some $r>0$ it holds $B_r(\varepsilon_n y_n) \subset \Lambda$, and so, $B_{\frac{r}{\varepsilon_n}}(y_n) \subset \Lambda_{\varepsilon_n}$. The last inclusion is equivalent to
			$$
			\mathbb{R}^N-\Lambda_{\varepsilon_n} \subset \mathbb{R}^N-B_{\frac{r}{\varepsilon_n}}(y_n).
			$$	
			On the other hand, the sequence $(y_n)$ can be chosen such that $w_n(x)=u_{\varepsilon_n}(x+y_n)$ satisfies (\ref{39}). Therefore, for $R>0$ large enough,
			$$w_n(x)<t_1,\,\,\,\forall x \in \mathbb{R}^N-B_R(0),$$
			which implies
			$$u_{\varepsilon_n}(x)<t_1,\,\,\, \forall x \in \mathbb{R}^N-B_R(y_n).$$
			Since for $n \in \mathbb{N}$ large enough $r/\varepsilon_n \geq R$, we have
			$$\mathbb{R}^N-\Lambda_{\varepsilon_n} \subset \mathbb{R}^N-B_{\frac{r}{\varepsilon_n}}(y_n)\subset\mathbb{R}^N-B_R(y_n),$$
			for all $n$ large enough, showing that
			$$ u_{\varepsilon_n}(x)<t_1,\,\,\,\forall x \in \mathbb{R}^N-\Lambda_{\varepsilon_n},$$
		which is absurd. This contradiction finishes the proof.
		\end{proof}	
	
	A natural question related with the problem $(P_\varepsilon)$ is about the concentration of positive solutions. Using (\ref{39}), the same arguments employed in \cite[ Section 4]{Alves-Ji} guarantee that the below result holds.
	
	\begin{corollary}[Concentration phenomena] Let $v_\varepsilon(x)=u_{\varepsilon}(x/\varepsilon)$. Then, $v_{\varepsilon}$ is a solution of $(P_\varepsilon)$ for $\varepsilon \in (0,\varepsilon_0)$. Moreover, if $z_\varepsilon \in \mathbb{R}^N$ is a global maximum point of $v_\varepsilon$, we have 
		$$\lim_{\varepsilon\rightarrow 0^+} V(z_\varepsilon)= V_0.$$
	\end{corollary}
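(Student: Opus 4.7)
The plan is to exploit the change of variables $v_\varepsilon(x) = u_\varepsilon(x/\varepsilon)$, together with the concentration information encoded in Lemma \ref{38}, to locate the global maxima of $v_\varepsilon$. The first observation is immediate: since $u_\varepsilon \in X_\varepsilon$ solves $(S_\varepsilon)$ for $\varepsilon \in (0,\varepsilon_0)$ by Theorem \ref{50}, and since $(P_\varepsilon)$ and $(S_\varepsilon)$ are linked precisely by the rescaling $u(x)=v(x/\varepsilon)$, the function $v_\varepsilon$ is a positive (classical, after standard elliptic regularity) solution of $(P_\varepsilon)$. Now, if $z_\varepsilon$ is a global maximum point of $v_\varepsilon$, then $p_\varepsilon := z_\varepsilon/\varepsilon$ is a global maximum point of $u_\varepsilon$, and $V(z_\varepsilon) = V(\varepsilon p_\varepsilon)$; hence it suffices to show that $\varepsilon p_\varepsilon \to y_0$ for some $y_0 \in \overline{\Lambda}$ with $V(y_0)=V_0$.

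I would argue by contradiction. Suppose there exist $\gamma_0>0$ and a sequence $\varepsilon_n \to 0^+$ such that $|V(z_{\varepsilon_n})-V_0| \geq \gamma_0$ for every $n$. Apply Lemma \ref{38} to the sequence $(u_{\varepsilon_n})$ to produce $(y_n) \subset \mathbb{R}^N$ such that, up to a subsequence, $w_n(x):=u_{\varepsilon_n}(x+y_n)$ converges in $X$ to a nontrivial limit $w$, with $\sup_n \|w_n\|_\infty < \infty$, the uniform decay $w_n(x)\to 0$ as $|x|\to\infty$ uniformly in $n$, and $\varepsilon_n y_n \to y_0$ for some $y_0 \in \Lambda$ satisfying $V(y_0)=V_0$. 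Writing $q_n := p_{\varepsilon_n}-y_n$, one has $q_n$ is a global maximum point of $w_n$, and $\varepsilon_n p_{\varepsilon_n} = \varepsilon_n q_n + \varepsilon_n y_n$. The whole argument reduces to showing that the sequence $(q_n)$ is bounded in $\mathbb{R}^N$; once this is established, $\varepsilon_n q_n \to 0$, hence $\varepsilon_n p_{\varepsilon_n} \to y_0$, and continuity of $V$ yields $V(z_{\varepsilon_n}) \to V(y_0)=V_0$, contradicting the standing assumption.

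To establish boundedness of $(q_n)$, the strategy is to exhibit a uniform positive lower bound on $w_n(q_n)=\|w_n\|_\infty$, and then invoke the uniform decay (\ref{39}). Since $w_n \to w$ in $X$ with $w\neq 0$, standard elliptic regularity applied to the equations satisfied by $w_n$ (whose coefficients are uniformly bounded because $(w_n)$ is uniformly bounded in $L^\infty$ and $V$ is bounded on compact sets) combined with a bootstrap via the subcritical growth of $F_2'$ and the Brézis--Kato-type arguments gives $w_n \to w$ uniformly on compact sets. Consequently, fixing $x_0$ with $w(x_0)>0$, one obtains
\[
\|w_n\|_\infty \geq w_n(x_0) \geq \tfrac{1}{2}w(x_0) =: \rho_0 > 0
\]
for all large $n$. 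By the uniform decay, there exists $R>0$, independent of $n$, such that $w_n(x) < \rho_0$ whenever $|x|\geq R$. Therefore $|q_n|\leq R$ for all sufficiently large $n$, which is exactly the bound required.

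The main obstacle is precisely this last step: upgrading the $X$-convergence $w_n \to w$ to an $L^\infty_{\mathrm{loc}}$ convergence that guarantees $\|w_n\|_\infty$ stays away from zero. This is routine but not automatic; it relies on the uniform $L^\infty$-bound of Lemma \ref{38}, on the subcritical control $|F_2'(t)|\leq C|t|^{p-1}$ with $p\in(2,2^*)$ from $(P_2)$, and on $F_1'(t)t \geq 0$, which together allow a standard Moser-type iteration and a Harnack/Schauder argument on the rescaled equations to pass from weak to uniform local convergence. Once this compactness refinement is in hand, the concentration statement $V(z_\varepsilon)\to V_0$ follows immediately from the contradiction scheme outlined above.
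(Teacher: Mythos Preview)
Your argument is correct and follows essentially the same route as the one the paper defers to (the paper gives no self-contained proof here, only invokes \cite[Section 4]{Alves-Ji} together with the uniform decay~(\ref{39})): translate to the rescaled solution, use Lemma~\ref{38} to anchor the profile near a point $y_0$ with $V(y_0)=V_0$, show the global maximum of $w_n$ stays in a fixed ball via a uniform positive lower bound on $\|w_n\|_\infty$ combined with~(\ref{39}), and conclude by continuity of $V$.

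One small remark: you obtain the lower bound on $\|w_n\|_\infty$ through $L^\infty_{\mathrm{loc}}$-convergence, which works but is heavier than needed. Since $u_{\varepsilon}$ is a classical positive solution of $(S_\varepsilon)$, evaluating the equation at the global maximum $p_\varepsilon$ (where $-\Delta u_\varepsilon(p_\varepsilon)\ge 0$) gives directly $V(\varepsilon p_\varepsilon)\le \log u_\varepsilon(p_\varepsilon)^2$, hence $\|w_n\|_\infty=u_{\varepsilon_n}(p_{\varepsilon_n})\ge e^{\inf_{\mathbb{R}^N}V/2}>0$, a uniform bound that avoids any bootstrap. This is the device used in \cite{Alves-Ji}; your variant is a legitimate alternative.
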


	\section{Multiplicity of solution for $(P_\varepsilon)$}
	In this section we will show the existence of multiple solution for $(P_\varepsilon)$ by using the Lusternik-Schnirelmann category theory. More precisely, setting
\begin{equation} \label{M}
M:=\{x \in \Lambda; \,V(x)=V_0\} \,\,\,\text{and}\,\,\,M_\delta:=\{x \in \mathbb{R}^N;\,d(x,M)\leq \delta\},
\end{equation}
where $\delta>0$ is small enough of such way that $M_\delta \subset \Lambda$, our arguments will prove that $(S_\varepsilon)$ has at least $\text{cat}_{M_\delta}(M)$ solutions. To begin with, we start by recalling some notions related with the Lusternik-Schnirelmann category theory, for further details see \cite[Chapter 5, and references therein]{Willem}.
	
	\begin{definition}
		Let $Y$ be a closed subset of a topological space $Z$. We say that the (Lusternik-Schnirelmann) category of $Y$ in $Z$ is $n$, $\text{cat}_{Z}(Y)=n$ for short, if $n$ is the least number of closed and contractible sets in $Z$ which cover $Y$.
	\end{definition}

	Suppose that $W$ is a Banach space and $V$ is a $C^1$- manifold of the form $V=\Psi^{-1}(\{0\})$, where $\Psi \in C^1(W,\mathbb{R})$ and $0$ is a regular value of $\Psi$. For a functional $I:W\longrightarrow\mathbb{R}$ denote
	$$I^d:=\{u \in V;\, I(u)\leq d\}.$$
	
	The following result can be found in \cite[Chapter 5]{Willem} and it is our main abstract tool to get the existence of multiple solution for $(P_\varepsilon)$. 
	
	\begin{theorem}\label{46}
		Let $I \in C^1(W,\mathbb{R})$ be such that $I|_V$ is bounded from below. Suppose that $I$ satisfies the $(PS)_c$ condition for $c \in [\inf I|_V, d]$, then $I|_V$ has at least $\text{cat}_{I^d}(I^d)$ critical points in $I^d$. 
	\end{theorem}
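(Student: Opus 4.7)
My plan is to deduce Theorem \ref{TherFinal} from the abstract Theorem \ref{46}, applied to $I = J_\varepsilon$ on the Nehari manifold $V = \mathcal{N}_\varepsilon$ — which is $C^1$ by Proposition \ref{20}, on which $J_\varepsilon$ satisfies $(PS)$ by Proposition \ref{45}, and whose constrained critical points are free critical points by Proposition \ref{47}. The topological task is to prove
\[
\mathrm{cat}_{\widetilde{\mathcal{N}}^d_\varepsilon}\bigl(\widetilde{\mathcal{N}}^d_\varepsilon\bigr)\ \geq\ \mathrm{cat}_{M_\delta}(M),\qquad \widetilde{\mathcal{N}}^d_\varepsilon := \{u \in \mathcal{N}_\varepsilon : J_\varepsilon(u) \leq c_0 + h(\varepsilon)\},
\]
for a suitable $h(\varepsilon)\downarrow 0$, via the Benci--Cerami--Cingolani--Lazzo scheme; then to halve the resulting count using the $\mathbb{Z}_2$-symmetry of $J_\varepsilon$ and to promote each antipodal orbit to a positive solution by a sign argument.

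For the model map, given $y \in M$ I set $\Phi_\varepsilon(y)(x) = t_{\varepsilon,y}\, \eta(\varepsilon x - y)\, u_0(x - y/\varepsilon)$, where $u_0$ is a positive ground state of $(P_0)$, $\eta\in C^\infty_c(\R^N,[0,1])$ is supported in $B_\delta(0)$ and equals $1$ on $B_{\delta/2}(0)$, and $t_{\varepsilon,y}>0$ is the Nehari-projection scalar from Lemma \ref{27}. Repeating the asymptotics of Lemma \ref{29}-$iii)$ uniformly in $y \in M$ yields $t_{\varepsilon,y}\to 1$ and $\sup_{y \in M} J_\varepsilon(\Phi_\varepsilon(y))\leq c_0 + h(\varepsilon)$ with $h(\varepsilon)\to 0$. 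For the barycenter, I fix a continuous truncation $\chi:\R^N \to \R^N$ equal to the identity on a large ball containing $M_\delta$ and globally bounded, and define
\[
\beta_\varepsilon(u) = \frac{\int_{\R^N} \chi(\varepsilon x)\, u(x)^2\, dx}{\int_{\R^N} u(x)^2\, dx}, \qquad u \in \mathcal{N}_\varepsilon.
\]
The crucial estimate $\beta_\varepsilon(\widetilde{\mathcal{N}}^d_\varepsilon)\subset M_\delta$ for all small $\varepsilon$ is obtained by contradiction: any violating sequence, after being upgraded from an almost-minimizer to a $(PS)$-sequence via Ekeland on $\mathcal{N}_\varepsilon$ (invoking Proposition \ref{45}), is concentrated by Lemma \ref{38} at a translate $y_n$ with $\varepsilon_n y_n \to y_0 \in M$, forcing $\beta_{\varepsilon_n}(u_n)\to y_0 \in M \subset M_\delta$.

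A direct translation computation yields $\beta_\varepsilon(\Phi_\varepsilon(y))\to y$ uniformly on $M$, so the straight-line homotopy $H(t,y)=(1-t)\beta_\varepsilon(\Phi_\varepsilon(y))+ty$ stays in $M_\delta$ and exhibits $\beta_\varepsilon\circ\Phi_\varepsilon$ as homotopic in $M_\delta$ to the inclusion $M\hookrightarrow M_\delta$. Standard LS inequalities then give $\mathrm{cat}_{\widetilde{\mathcal{N}}^d_\varepsilon}(\widetilde{\mathcal{N}}^d_\varepsilon)\geq \mathrm{cat}_{M_\delta}(M)$, so Theorem \ref{46} produces $N_\varepsilon := \mathrm{cat}_{M_\delta}(M)$ critical points of $J_\varepsilon|_{\mathcal{N}_\varepsilon}$ in $\widetilde{\mathcal{N}}^d_\varepsilon$. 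By Proposition \ref{47} these solve $(\tilde S_\varepsilon)$, and by the truncation bound (\ref{40}) in Theorem \ref{50} they solve $(S_\varepsilon)$ for $\varepsilon \in (0,\varepsilon_3)$ with $\varepsilon_3 \leq \varepsilon_0$ sufficiently small.

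Finally, since $g_2$ is odd and $F_1$ is even, $J_\varepsilon$ is even on $X_\varepsilon$ and $\mathcal{N}_\varepsilon$ is $\mathbb{Z}_2$-invariant, so the $N_\varepsilon$ critical points organize into at least $\lceil N_\varepsilon/2\rceil$ antipodal orbits $\{u,-u\}$. I will rule out sign-changing representatives by the energy-additivity trick: if both $u^+,u^- \not\equiv 0$, then (because $g_2$ is odd and $F_1$ even on disjoint supports) both $u^+,u^- \in \mathcal{N}_\varepsilon$, whence $J_\varepsilon(u) = J_\varepsilon(u^+)+J_\varepsilon(u^-) \geq 2c_\varepsilon > c_0 + h(\varepsilon)$ for $\varepsilon$ small, by Corollary \ref{44}, contradicting $u\in\widetilde{\mathcal{N}}^d_\varepsilon$. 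Hence each orbit has constant sign, and after selecting $|u|$ and invoking the maximum principle of \cite{Vazquez} (as in Proposition \ref{54}), each orbit yields one distinct positive solution, giving the $\lceil\mathrm{cat}_{M_\delta}(M)/2\rceil$ count in $(i)$-$(ii)$. The main obstacle is the barycenter estimate on almost-minimizing rather than exact critical sequences: extending Lemma \ref{38} uniformly over $\widetilde{\mathcal{N}}^d_\varepsilon$ requires combining the $(PS)$ condition on $\mathcal{N}_\varepsilon$ with Ekeland's principle on the $C^1$-manifold $\mathcal{N}_\varepsilon$, and this is precisely where the new Orlicz-based framework of Sections 3--5 becomes indispensable, because the mixed $H^1$--$L^{F_1}$ topology must be controlled throughout.
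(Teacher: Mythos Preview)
You have misidentified the target. Theorem~\ref{46} is an \emph{abstract} Lusternik--Schnirelmann multiplicity result which the paper does not prove at all: it is simply quoted from \cite[Chapter 5]{Willem}. There is no proof in the paper to compare against, and your proposal makes no attempt to establish the abstract statement either (you never argue why a $C^1$ functional on a $C^1$ manifold with $(PS)_c$ must have $\mathrm{cat}_{I^d}(I^d)$ critical points in the sublevel set).

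What you have actually written is a sketch of the \emph{application} of Theorem~\ref{46} to deduce Theorem~\ref{TherFinal}. Viewed as such, your outline is essentially the paper's own argument in Section~6: the model map $\Phi_\varepsilon$, the barycenter $\beta$, the uniform limits of Lemma~\ref{48} and the subsequent lemma, the Ekeland upgrade of almost-minimizers (your ``main obstacle'' is exactly Lemma~\ref{52}), the homotopy $\beta\circ\Phi_\varepsilon\simeq i$, the category inequality of Proposition~\ref{T1}, the sign/halving argument, and the passage from $(\tilde S_\varepsilon)$ to $(S_\varepsilon)$ via the $L^\infty$ bound of Theorem~\ref{50}. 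The only cosmetic differences are that the paper's barycenter uses $|u|^p$ rather than $u^2$, and the paper does not claim $\beta_\varepsilon(\widetilde{\mathcal N}^d_\varepsilon)\subset M_\delta$ directly but rather builds the contractions $H_i$ by hand from $\eta$ and $g_i$; neither changes the substance. So as a proof of Theorem~\ref{TherFinal} your proposal is correct and follows the paper, but as a proof of Theorem~\ref{46} it is simply off-target.
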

	
	In the sequel, let us introduce some notations that will be used later on. Hereafter, we denote by $u_0$ a  positive ground state solution of $(P_0)$. Furthermore, for each $\delta>0$, we fix $\phi \in C^\infty([0,\infty)$ such that $0\leq\phi\leq 1$ and
	$$\phi(t)=\left\{\begin{aligned}
	&1,\,\,\,&0\leq t\leq \frac{\delta}{2};\\
	&0,\,\,\,&t\geq \delta.
	\end{aligned}
	\right.
	$$
	Using the above notation, for each $y \in M$ we also set 	
	$$
	w_{\varepsilon, y}(x):=\phi(|\varepsilon x - y|)u_0\left(\frac{\varepsilon x-y}{\varepsilon}\right)
	$$
	and let $t_{\varepsilon, y}>0$ be such that $t_{\varepsilon, y}w_{\varepsilon, y} \in \mathcal{N}_\varepsilon.$ Note that $|\text{supp}(w_{\varepsilon, y})\cap\Lambda_\varepsilon|>0$, then we know that $t_{\varepsilon, y}$ verifies
	$J_\varepsilon(t_{\varepsilon, y}w_{\varepsilon, y})=\max_{t \geq 0}J_\varepsilon(tw_{\varepsilon, y}).$
	
	For each $\varepsilon>0$, we define the map
	$$\begin{aligned}
	\Phi_\varepsilon:\,&M\longrightarrow \mathcal{N}_\varepsilon\\
					&y \,\,\,\longmapsto \Phi_{\varepsilon, y}\equiv t_{\varepsilon, y}w_{\varepsilon, y}.
	\end{aligned}
	$$
	Now, fix $\rho>0$ such that $M_\delta \subset B_\rho(0)$ and $\zeta:\mathbb{R}^N \longrightarrow \mathbb{R}^N$ given by 
	$$\zeta(x)=\left\{\begin{aligned}
	&x,\,\,\,&|x| \leq \rho;\\
	&\rho\frac{x}{|x|},\,\,\,&|x|\geq \rho.
	\end{aligned}
	\right.
	$$
	Finally, we set $\beta:\mathcal{N}_\varepsilon\longrightarrow\mathbb{R}^N$ given by
	$$\beta(u):=\frac{\displaystyle{\int_{\mathbb{R}^N}}\zeta(\varepsilon x)|u(x)|^p}{||u||_p^p}.
	$$
	
	\begin{lemma}\label{48}
		The following limit holds
		$$
		\lim_{\varepsilon\rightarrow 0}J_\varepsilon(\Phi_{\varepsilon, y})=c_0, \quad \mbox{	uniformly in} \quad y \in M.
		$$
	\end{lemma}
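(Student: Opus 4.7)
The plan is to argue by contradiction, exploiting the compactness of $M$. Note first that $M$ is compact: by $(V_2)$, $V>V_0$ on $\partial\Lambda$, so the closed set $\{V=V_0\}$ cannot meet $\partial\Lambda$, and hence $M=\{V=V_0\}\cap\overline{\Lambda}$ is closed and bounded. If the conclusion fails, there exist $\eta>0$, $\varepsilon_n\to 0^+$, and $y_n\in M$ with $|J_{\varepsilon_n}(\Phi_{\varepsilon_n,y_n})-c_0|\ge\eta$; up to a subsequence, $y_n\to y\in M$, with $V(y)=V_0$ and $y$ lying in the open set $\Lambda$.

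Next I pass to a translated frame. Define $\tilde{w}_n(z):=w_{\varepsilon_n,y_n}(z+y_n/\varepsilon_n)=\phi(\varepsilon_n|z|)u_0(z)$. Since $0\le\phi\le 1$, $\phi(\varepsilon_n|z|)\to 1$ pointwise, and $u_0\in X$, the dominated convergence theorem gives $\tilde{w}_n\to u_0$ in $H^1(\mathbb{R}^N)$ together with $\int F_1(|\tilde{w}_n-u_0|)\to 0$. The $(\Delta_2)$-property for $F_1$ (Proposition \ref{4}) upgrades the latter to $\tilde{w}_n\to u_0$ in $L^{F_1}(\mathbb{R}^N)$, so $\tilde{w}_n\to u_0$ in $X$. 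Moreover $V(\varepsilon_n z+y_n)\to V_0$ pointwise in $z$, and since $y$ is interior to $\Lambda$, $\chi_\Lambda(\varepsilon_n z+y_n)=1$ for large $n$ at every fixed $z$, so that $G_2(\varepsilon_n z+y_n,s)\to F_2(s)$ for every $s\ge 0$.

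Set $t_n:=t_{\varepsilon_n,y_n}$. Applying the change of variable $z=x-y_n/\varepsilon_n$ to $J'_{\varepsilon_n}(t_n w_{\varepsilon_n,y_n})(t_n w_{\varepsilon_n,y_n})=0$ yields
$$t_n^2\!\!\int_{\mathbb{R}^N}\!(|\nabla\tilde{w}_n|^2+(V(\varepsilon_n z+y_n)+1)|\tilde{w}_n|^2)+\!\!\int_{\mathbb{R}^N}\!F'_1(t_n\tilde{w}_n)t_n\tilde{w}_n=\!\!\int_{\mathbb{R}^N}\!G'_2(\varepsilon_n z+y_n,t_n\tilde{w}_n)t_n\tilde{w}_n.$$
Proposition \ref{21} provides a lower bound $t_n\ge c>0$, since $\|t_nw_{\varepsilon_n,y_n}\|_{H_{\varepsilon_n}}\ge\beta$ while $\|w_{\varepsilon_n,y_n}\|_{H_{\varepsilon_n}}$ remains bounded by the previous step. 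An upper bound is obtained via the argument of Claim \ref{57}: if $t_n\to +\infty$, dividing the identity by $t_n^2$ and using $F'_1(s)s\le 2F_1(s)$ together with the $(\Delta_2)$-property to keep the $F_1$-term bounded, the superquadratic growth $F'_2(s)/s\to +\infty$ on $\Lambda$ and Fatou's lemma force the right-hand side to blow up, contradicting the boundedness of the left-hand side. Extracting $t_n\to t_0\in(0,+\infty)$ and passing to the limit in the identity, we obtain $t_0u_0\in\mathcal{N}_0$; since $u_0\in\mathcal{N}_0$ already, the autonomous version of Lemma \ref{27} forces $t_0=1$. Performing the same change of variable on $J_{\varepsilon_n}(t_n w_{\varepsilon_n,y_n})$ and passing to the limit term by term then delivers $J_{\varepsilon_n}(\Phi_{\varepsilon_n,y_n})\to J_0(u_0)=c_0$, contradicting the standing assumption.

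I expect the main obstacle to be the passage to the limit in the logarithmic Orlicz term: $F_1$ has no polynomial bound near zero, so convergence in $L^{F_1}$ must be argued through modular convergence and the $(\Delta_2)$-condition rather than via Sobolev embeddings. A subsidiary technicality is the simultaneous control of $t_n$ from below and from above before passing to the limit in the Nehari equation, which requires blending Proposition \ref{21} with the superquadratic growth of $F_2$.
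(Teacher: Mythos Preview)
Your argument is correct and follows the same overall contradiction--translation--$t_n\to 1$ scheme as the paper, but the mechanics of controlling $t_n$ differ. The paper observes that the \emph{entire} support of $\tilde w_n$ lies in the region where $G_2\equiv F_2$ (indeed $|\varepsilon_n z|\le\delta$ and $y_n\in M$ give $\varepsilon_n z+y_n\in M_\delta\subset\Lambda$ for every $n$, not just pointwise in the limit). It then replaces $F'_2-F'_1$ by $s\log s^2+s$ via \eqref{3}, obtaining the explicit relation
\[
\log t_n^2\;=\;\frac{\int(|\nabla\tilde w_n|^2+V(\varepsilon_n z+y_n)\tilde w_n^2)-\int\tilde w_n^2\log\tilde w_n^2}{\int\tilde w_n^2},
\]
from which boundedness of $(t_n)$ and the limit $t_n\to 1$ follow by a single application of dominated convergence.

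Your route instead keeps $F_1$ and $G_2$ separate: the lower bound on $t_n$ comes from Proposition~\ref{21}, the upper bound from the scaling $F_1(\lambda s)\le\lambda^2 F_1(s)$ (a consequence of the upper index $m=2$ in \eqref{2}, which is what makes your $(\Delta_2)$ step work) together with $(P_4)$ and Fatou, and then you pass to the limit in the full Nehari identity by dominated convergence. This is sound; the domination for the $F_1$-term is supplied by $0\le t_n\tilde w_n\le T u_0$ and $F_1(Tu_0)\in L^1$, and for the $F_2$-term by $(P_2)$. What you gain is a proof template that does not rely on the exact logarithmic identity and would transfer to other decompositions satisfying \eqref{FIN}--$(P_4)$; what the paper gains is a one-line computation in place of three separate estimates.
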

	\begin{proof}
		Arguing by contradiction, we get sequences $(\varepsilon_n)$ and $(y_n)$, with $\varepsilon_n \rightarrow 0$ and $(y_n) \subset M$, such that
		\begin{equation}\label{43}
		|J_{\varepsilon_n}(\Phi_{\varepsilon_n, y_n})-c_0|\geq\delta_0,
		\end{equation}
		for some $\delta_0>0$. Setting $t_n=t_{\varepsilon_n, y_n}$ and using that $\Phi_{\varepsilon_n, y_n} \in \mathcal{N}_{\varepsilon_n}$, we find
		\begin{equation}\label{42}
		\begin{aligned}
		J_{\varepsilon_n}(\Phi_{\varepsilon_n, y_n})=\frac{t_n^2}{2}&\int_{\mathbb{R}^N}(|\nabla \phi(\varepsilon_n z) u_0(z) |^2 + (V(\varepsilon_n z+ y_n)+1)|\phi(\varepsilon_n z)u_0(z)|^2)+\\
		&+\int_{\mathbb{R}^N}F_1(t_n\phi(\varepsilon_n z)u_0(z))-\int_{\mathbb{R}^N}G_2(\varepsilon_n z+ y_n, t_n\phi(\varepsilon_n z)u_0(z))
		\end{aligned}
		\end{equation}
		and
		\begin{equation}\label{41}
		\begin{aligned}
		&\quad\quad \quad \quad \,\,\, t_n^2\int_{\mathbb{R}^N}(|\nabla \phi(\varepsilon_n z) u_0(z) |^2 + (V(\varepsilon_n x+\varepsilon_n y_n)+1)|\phi(\varepsilon_n z)u_0(z)|^2)=\\
		&=\int_{\mathbb{R}^N}G'_2(\varepsilon_n z+ y_n, t_n\phi(\varepsilon_n z)u_0(z))t_n\phi(\varepsilon_n z)u_0(z)-\int_{\mathbb{R}^N}F'_1(t_n\phi(\varepsilon_n z)u_0(z))t_n\phi(\varepsilon_n z)u_0(z).
		\end{aligned}
		\end{equation}
		Note that, if $z\in B_{\frac{\delta}{\varepsilon_n}}(0)$, then $\varepsilon_n z+y_n\in B_\delta(y_n)\subset M_\delta.$ By (\ref{M}),  we derive that $\varepsilon_n z+y_n \in \Lambda$. Hence, for $z \in B_{\frac{\delta}{\varepsilon_n}}(0)$ one has $G'_2\equiv F'_2$. This information together with (\ref{41}) yields
		$$\begin{aligned}
		&\quad\quad\int_{\mathbb{R}^N}(|\nabla \phi(\varepsilon_n z) u_0(z) |^2 + (V(\varepsilon_n x+y_n)+1)|\phi(\varepsilon_n z)u_0(z)|^2)=\\
		&\quad \quad \quad\quad \,\,\,=\int_{\mathbb{R}^N}|\phi(\varepsilon_n z)u_0(z)|^2\log(|t_n\phi(\varepsilon_n z)u_0(z)|^2)=\\
		&=\int_{\mathbb{R}^N}|\phi(\varepsilon_n z)u_0(z)|^2\log(|\phi(\varepsilon_n z)u_0(z)|^2)+\log(|t_n|^2)\int_{\mathbb{R}^N}|\phi(\varepsilon_n z)u_0(z)|^2.
		\end{aligned}
		$$
		Our next step is proving that, going to a subsequence, $t_n \rightarrow 1$. Since $y_n \in M$, we can assume $y_n \rightarrow y_0\in M$. In this way, the above equality ensures that $(t_n)$ is a bounded sequence. Otherwise, going to a subsequence if necessary, we would have $t_n \rightarrow \infty$ and thus $\log (|t_n|^2)\rightarrow \infty$. Gathering this information with the Lebesgue Dominated Convergence Theorem in the above equality we arrive at a contradiction. 
		
		We may assume that $t_n \rightarrow t_0 \geq 0$. Using the same ideas of preceding paragraph, one can see that $t_0>0$. Finally, by combining the Lebesgue's Theorem with the last equality we find
		$$t_0^2\int_{\mathbb{R}^N}(|\nabla u_0|^2 + V_0|u_0|^2)=\int_{\mathbb{R}^N}|t_0u_0|^2\log(t_0|u_0|^2),$$
		which shows that $t_0=1$, because $u_0$ is a ground state solution of $(P_0)$. As $t_n\rightarrow 1$, the sentence in (\ref{42}) implies that $J_{\varepsilon_n}(\Phi_{\varepsilon_n, y_n})\rightarrow J_0(u_0)=c_0$, contradicting (\ref{43}). The proof is now complete.
	\end{proof}	
	Let us introduce the following set
	$$
	\tilde{\mathcal{N}}_\varepsilon:\{u \in \mathcal{N}_\varepsilon;\,J_\varepsilon(u)\leq c_0+o_1(\varepsilon)\}.
	$$
	Note that the last lemma assures that $\Phi_{\varepsilon, y} \in \tilde{\mathcal{N}}_\varepsilon$.
	
	\begin{lemma}
		The map $\beta$ satisfies 
		$$\lim_{\varepsilon\rightarrow 0}\beta(\Phi_{\varepsilon, y})=y, \quad 	\mbox{uniformly in} \quad y\in M. $$
	
	\end{lemma}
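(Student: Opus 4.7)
The plan is to argue by contradiction using the compactness of $M$ and a change of variables that transforms $\beta(\Phi_{\varepsilon, y})$ into an expression in which the parameter $t_{\varepsilon,y}$ cancels, leaving a quotient amenable to the Lebesgue Dominated Convergence Theorem.

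Suppose the conclusion fails. Then there exist $\eta_0>0$, a sequence $\varepsilon_n\to 0^+$, and points $y_n\in M$ with
\[
|\beta(\Phi_{\varepsilon_n, y_n})-y_n|\geq \eta_0.
\]
Since $M$ is compact (it is closed and contained in the bounded set $\Lambda$), we may pass to a subsequence and assume $y_n\to y_\ast\in M$. The first step is to write $\beta(\Phi_{\varepsilon_n, y_n})$ explicitly: by the change of variable $z=x-y_n/\varepsilon_n$, which gives $\varepsilon_n x-y_n=\varepsilon_n z$ and $\varepsilon_n x=\varepsilon_n z+y_n$, and noting that the factor $t_{\varepsilon_n,y_n}^p$ appears in both numerator and denominator and cancels, we obtain
\[
\beta(\Phi_{\varepsilon_n, y_n})=\frac{\displaystyle\int_{\mathbb{R}^N}\zeta(\varepsilon_n z+y_n)\,|\phi(|\varepsilon_n z|)|^p\,|u_0(z)|^p\,dz}{\displaystyle\int_{\mathbb{R}^N}|\phi(|\varepsilon_n z|)|^p\,|u_0(z)|^p\,dz}.
\]

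Next I would pass to the limit in numerator and denominator. Since $\phi\in C^\infty$ with $\phi(0)=1$ and $0\le\phi\le 1$, we have $\phi(|\varepsilon_n z|)\to 1$ pointwise and $|\phi(|\varepsilon_n z|)|^p|u_0(z)|^p\le |u_0(z)|^p\in L^1(\mathbb{R}^N)$ (recall $u_0$ is the positive ground state of $(P_0)$, which decays exponentially and hence belongs to every $L^p$ with $p\in[2,2^*)$). Dominated convergence therefore gives that the denominator converges to $\|u_0\|_p^p>0$. For the numerator, the vector-valued integrand $\zeta(\varepsilon_n z+y_n)|\phi(|\varepsilon_n z|)|^p|u_0(z)|^p$ is dominated in norm by $\rho\,|u_0(z)|^p\in L^1$ because $|\zeta|\le\rho$ on $\mathbb{R}^N$. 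Since $y_n\to y_\ast$ and $y_\ast\in M\subset M_\delta\subset B_\rho(0)$, for every fixed $z$ we have $\varepsilon_n z+y_n\to y_\ast$ with $|y_\ast|<\rho$, so $\zeta(\varepsilon_n z+y_n)\to \zeta(y_\ast)=y_\ast$. Dominated convergence applied componentwise yields that the numerator converges to $y_\ast\|u_0\|_p^p$. Hence
\[
\beta(\Phi_{\varepsilon_n, y_n})\longrightarrow y_\ast,
\]
and combined with $y_n\to y_\ast$ this contradicts $|\beta(\Phi_{\varepsilon_n, y_n})-y_n|\geq \eta_0$, completing the proof.

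The only mild obstacle is verifying that the denominator stays bounded away from zero, which is immediate once we observe that the cancellation of $t_{\varepsilon_n,y_n}^p$ occurs and that $u_0\not\equiv 0$ with $\phi\equiv 1$ on $[0,\delta/2]$; no knowledge of the behaviour of $t_{\varepsilon_n,y_n}$ is actually needed for this lemma, which is what makes the argument clean.
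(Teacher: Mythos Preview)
Your proof is correct and follows essentially the same approach as the paper: contradiction, the change of variable $z=x-y_n/\varepsilon_n$ (which cancels $t_{\varepsilon_n,y_n}^p$), compactness of $M$ to extract a convergent subsequence $y_n\to y_\ast$, and dominated convergence using $|\zeta|\le\rho$ and $u_0\in L^p$. The only cosmetic difference is that the paper writes $\beta(\Phi_{\varepsilon_n,y_n})=y_n+\dfrac{\int(\zeta(\varepsilon_n z+y_n)-y_n)|\phi u_0|^p}{\int|\phi u_0|^p}$ and shows the second term is $o_n(1)$, whereas you compute the limit of numerator and denominator separately; these are equivalent.
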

	\begin{proof}
		The idea is the same found in \cite[Lemma 4.2]{Alves-Giovany}. If the result is false, there are sequences $\varepsilon_n \rightarrow 0$ and $(y_n) \subset M$ such that
		$$|\beta(\Phi_{\varepsilon_n, y_n})-y_n|\geq \delta_1,$$
		for some $\delta_1>0$. By using the definition of $\beta$ and setting $z=\frac{\varepsilon_n x-y}{\varepsilon_n}$, we find
		$$\beta(\Phi_{\varepsilon_n, y_n})=y_n+\frac{\displaystyle{\int_{\mathbb{R}^N}}(\zeta(\varepsilon z+y_n)-y_n)|\phi(|\varepsilon_n z|)u_0(z)|^p}{\displaystyle{\int_{\mathbb{R}^N}}|\phi(|\varepsilon_n z|)u_0(z)|^p}.$$
		Without loss of generality, we may assume that $y_n\rightarrow y_0 \in M \subset B_\rho(0)$. Thus, the definition of $\zeta$ together with the Lebesgue Dominated Convergence Theorem implies that
		$$|\beta(\Phi_{\varepsilon_n, y_n})-y_n|=o_n(1),$$
		which is absurd. 
	\end{proof}	
	
	In the next lemma we prove a version of result of Cingolani-Lazzo in \cite[Claim 4.2]{Cingolani-Lazzo}. In that paper the authors have considered a homogenous type nonlinearity while in our case we are working with a logarithmic nonlinearity.
	\begin{lemma}\label{52}
		Let $u_n \in \mathcal{N}_{\varepsilon_n}$. Suppose that $J_{\varepsilon_n}(u_n)\rightarrow c_0$, where $\varepsilon_n \rightarrow 0$. Then, there exists a sequence $(y_n)$ in $\mathbb{R}^N$ such that $w_n(x):=u_n(x+y_n)$ has a convergent subsequence in $X$. Furthermore,
		$$\lim_{n \to +\infty} (\varepsilon_n y_n) = y_0,$$
		for some $y_0 \in M$.
	\end{lemma}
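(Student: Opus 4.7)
The strategy is to reduce to a situation where the arguments of Lemma \ref{38} apply almost verbatim. Since $u_n \in \mathcal{N}_{\varepsilon_n}$ satisfies $J_{\varepsilon_n}(u_n) \to c_0$ and Corollary \ref{44} yields $c_{\varepsilon_n} \to c_0$, the sequence $(u_n)$ is asymptotically minimizing for $J_{\varepsilon_n}$ on the $C^1$-manifold $\mathcal{N}_{\varepsilon_n}$ (Proposition \ref{20}). I would first apply Ekeland's variational principle on $\mathcal{N}_{\varepsilon_n}$ to produce, for each $n$, an element $\tilde{u}_n \in \mathcal{N}_{\varepsilon_n}$ with $\|\tilde{u}_n - u_n\|_{\varepsilon_n} \to 0$, $J_{\varepsilon_n}(\tilde{u}_n) \to c_0$, and $\|(J_{\varepsilon_n}|_{\mathcal{N}_{\varepsilon_n}})'(\tilde{u}_n)\|_* \to 0$. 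Running the Lagrange-multiplier argument from the proof of Proposition \ref{45} (which showed that the multipliers $\lambda_n$ tend to $0$) then promotes $(\tilde{u}_n)$ to an asymptotic critical sequence: $J'_{\varepsilon_n}(\tilde{u}_n) \to 0$ in $X^*_{\varepsilon_n}$.

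Next, I would repeat the main steps of Lemma \ref{38} applied to $(\tilde{u}_n)$ rather than to exact critical points, since none of those steps used more than the Nehari identity and an asymptotic Euler--Lagrange equation. First, boundedness of $(\tilde{u}_n)$ in the space $X$ follows exactly as in the opening estimate of Lemma \ref{38}, using $J'_{\varepsilon_n}(\tilde{u}_n)\tilde{u}_n = o_n(1)$ and the $\tilde{u}_n \in \mathcal{N}_{\varepsilon_n}$ identity. Second, non-vanishing is enforced: if $\sup_{y}\int_{B_r(y)}|\tilde{u}_n|^2 \to 0$, then Lions' lemma gives $\tilde{u}_n \to 0$ in $L^p(\R^N)$ for $p \in (2,2^*)$, whence the growth bounds on $G'_2$ combined with the choice $\alpha_0+1 > 2b_0$ force $\tilde{u}_n \to 0$ in $X_{\varepsilon_n}$, contradicting $J_{\varepsilon_n}(\tilde{u}_n) \to c_0 > 0$. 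Choose $y_n$ realizing the non-vanishing, set $\tilde{w}_n(x) := \tilde{u}_n(x+y_n)$, extract a weak limit $w \neq 0$ in $X$, and verify the analogue of Claim \ref{32} to conclude $d(\varepsilon_n y_n, \overline{\Lambda}) \to 0$; this step uses precisely that $J'_{\varepsilon_n}(\tilde{u}_n)\phi \to 0$ for every $\phi \in C_c^\infty(\R^N)$ and hence survives the passage from exact critical points to the Ekeland approximations. Comparing the resulting limit functional $\tilde{J}$ with $J_0$ via the energy identity of the Nehari manifold (exactly as in the closing computations of Lemma \ref{38}) forces $V(y_0) = V_0$, hence $y_0 \in M$ by $(V_2)$, and strong convergence $\tilde{w}_n \to w$ in $X$.

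Finally, to return from $\tilde{u}_n$ to $u_n$: since $V(\varepsilon_n x) + 1 \geq \alpha_0 + 1 > 0$, the $H_{\varepsilon_n}$-norm dominates the $H^1$-norm uniformly in $n$, so $\|\cdot\|_X \leq C\|\cdot\|_{\varepsilon_n}$ with $C$ independent of $n$. Translating the identity $\|u_n - \tilde{u}_n\|_{\varepsilon_n} \to 0$ by $y_n$ (the norm is translation-invariant in $x$) therefore yields $\|u_n(\cdot + y_n) - \tilde{u}_n(\cdot + y_n)\|_X \to 0$, so $w_n(x) = u_n(x+y_n) \to w$ in $X$, and the same sequence $(y_n)$ works with the same $y_0 \in M$. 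The main obstacle, as anticipated, is the Ekeland step: the lemma assumes only that $(u_n)$ lies on $\mathcal{N}_{\varepsilon_n}$ with energy converging to $c_0$, not that it is a Palais--Smale sequence, so the entire reduction rests on the fact that $\mathcal{N}_{\varepsilon_n}$ is a genuine $C^1$-manifold with $\Psi'_\varepsilon(u)u$ bounded away from $0$ on it (Proposition \ref{20}) uniformly enough to sustain the Lagrange-multiplier elimination along the whole sequence.
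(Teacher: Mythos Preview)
Your proposal is correct and follows essentially the same route as the paper's proof: apply Ekeland's variational principle on $\mathcal{N}_{\varepsilon_n}$ to produce a nearby sequence $(\tilde u_n)$ that is a constrained $(PS)$ sequence, use the Lagrange-multiplier elimination of Proposition~\ref{45} to upgrade this to a free $(PS)$ sequence, rerun the argument of Lemma~\ref{38}, and finally transfer the conclusion back to $(u_n)$ via $\|u_n-\tilde u_n\|_{\varepsilon_n}\to 0$. Your write-up is in fact more explicit than the paper's on the final transfer step (the uniform comparison $\|\cdot\|_X\le C\|\cdot\|_{\varepsilon_n}$ and the translation-invariance of the $X$-norm), which the paper leaves implicit.
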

	\begin{proof}
		As made in the proof of Lemma \ref{38}, we have that $\displaystyle{\sup_{n\in \mathbb{N}}}||u_n||_{\varepsilon_n}<\infty$, and so, $(u_n)$ is a bounded sequence in $X$. By Lemmas \ref{29}-$ii)$ and \ref{43}, we know that $c_{\varepsilon_n}=\displaystyle{\inf_{u\in\mathcal{N}_{\varepsilon_n}}}J_{\varepsilon_n}(u)$ and $J_{\varepsilon_n}(u_n)=c_{\varepsilon_n}+o_n(1)$. Therefore, by a slight variant of Ekeland's Variational Principle, there is $v_n \in \mathcal{N}_{\varepsilon_n}$ such that\\ \\
		$i)$ $J_{\varepsilon_n}(v_n)=c_{\varepsilon_n}+o_n(1)$;\\
		$ii)$ $||v_n -u_n||_{\varepsilon_n}\leq o_n(1)$;\\
		$iii)$ $||J'_{\varepsilon_n}(v_n)||_*=o_n(1)$.\\
		
	The reasoning employed in the proof of the Proposition \ref{45} shows that $||J'_{\varepsilon_n}(v_n)||_{X'_{\varepsilon_n}}\rightarrow 0$, where $X'_{\varepsilon_n}$ designates the topological dual space of $X_{\varepsilon_n}$.  From the condition $ii)$ above, 
		$$ J'_{\varepsilon_n}(v_n)v_n=o_n(1).$$
		Now, by following the steps in the proof of Lemma \ref{38}, we get a sequence $(y_n) \subset \mathbb{R}^N$ such that
		$$\lim_{n \to +\infty} (\varepsilon_n y_n) =y_0,$$
		for some $y_0 \in M$. Moreover, the sequence $\tilde{w}_n=v_n(\cdot+y_n)$ has a convergent subsequence in $X$ and thus, using $ii)$ above, $w_n:=u_n(\cdot+y_n)$ has a convergent subsequence in $X$. This finishes the proof.
	\end{proof}	
	The below result relates the number of solutions of $(\tilde{S}_\varepsilon)$ with $\text{cat}_{M_\delta}(M)$.
	\begin{proposition} \label{T1}
		Assume that $(V_1)-(V_2)$ hold and that $\delta$ is small enough. Then, problem $(\tilde{S}_\varepsilon)$ has at least $\text{cat}_{M_\delta}(M)$ solutions, with $\varepsilon \in (0,\varepsilon_1)$, for some $\varepsilon_1>0$.
	\end{proposition}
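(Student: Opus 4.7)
I would follow the classical Benci--Cerami--Cingolani--Lazzo scheme, using the maps $\Phi_\varepsilon$ and $\beta$ already constructed to compare $\mathrm{cat}_{\tilde{\mathcal{N}}_\varepsilon}(\tilde{\mathcal{N}}_\varepsilon)$ with $\mathrm{cat}_{M_\delta}(M)$, and then apply Theorem \ref{46} on the sublevel set $\tilde{\mathcal{N}}_\varepsilon$. The decisive preparatory step is to show that $\beta(\tilde{\mathcal{N}}_\varepsilon)\subset M_\delta$ for $\varepsilon$ small enough. I would argue by contradiction: if this fails, there exist $\varepsilon_n\to 0$ and $u_n\in \mathcal{N}_{\varepsilon_n}$ with $J_{\varepsilon_n}(u_n)\leq c_0+o_1(\varepsilon_n)$ and $\beta(u_n)\notin M_\delta$. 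Lemma \ref{29}-$(ii)$ together with Corollary \ref{44} forces $J_{\varepsilon_n}(u_n)\to c_0$, so Lemma \ref{52} yields a sequence $(y_n)\subset\mathbb{R}^N$ such that $\varepsilon_n y_n \to y_0\in M$ and $w_n(x):=u_n(x+y_n)$ converges in $X$ to some $w\neq 0$. A change of variable in the definition of $\beta$ produces
$$
\beta(u_n)= \varepsilon_n y_n + \frac{\displaystyle{\int_{\mathbb{R}^N}} \bigl(\zeta(\varepsilon_n z+\varepsilon_n y_n)-\varepsilon_n y_n\bigr)|w_n(z)|^p\, dz}{\displaystyle{\int_{\mathbb{R}^N}}|w_n(z)|^p\, dz},
$$
and since $y_0\in M\subset B_\rho(0)$ we have $\zeta(y_0)=y_0$; the continuity of $\zeta$, combined with $w_n\to w$ in $L^p(\mathbb{R}^N)$, then allows Lebesgue's theorem to conclude $\beta(u_n)\to y_0\in M\subset M_\delta$, the desired contradiction.

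Once this inclusion is available, the uniform limit $\beta(\Phi_{\varepsilon,y})\to y$ on $M$ established in the preceding barycenter lemma implies that, for $\varepsilon$ sufficiently small, the straight-line homotopy
$$
H(t,y):=\zeta\bigl((1-t)\beta(\Phi_{\varepsilon,y})+ty\bigr),\qquad (t,y)\in[0,1]\times M,
$$
takes values in $M_\delta$ (the endpoints $\beta(\Phi_{\varepsilon,y})\in M_\delta$ and $y\in M$ are both in $B_\rho(0)$, where $\zeta=\mathrm{id}$, and $H$ continuously deforms $\beta\circ\Phi_\varepsilon$ to the inclusion $\iota:M\hookrightarrow M_\delta$). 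The existence of continuous maps $\Phi_\varepsilon:M\to\tilde{\mathcal{N}}_\varepsilon$ and $\beta:\tilde{\mathcal{N}}_\varepsilon\to M_\delta$ with $\beta\circ\Phi_\varepsilon\simeq\iota$ then forces, by the standard category comparison (see, e.g., \cite{Cingolani-Lazzo}),
$$
\mathrm{cat}_{\tilde{\mathcal{N}}_\varepsilon}(\tilde{\mathcal{N}}_\varepsilon)\;\geq\;\mathrm{cat}_{M_\delta}(M).
$$

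Finally, I would apply Theorem \ref{46} with $W=X_\varepsilon$, $V=\mathcal{N}_\varepsilon$, $I=J_\varepsilon$ and $d=c_0+o_1(\varepsilon)$: the functional $J_\varepsilon|_{\mathcal{N}_\varepsilon}$ is bounded from below by Proposition \ref{21} together with the identity exploited in \eqref{6}, and it satisfies $(PS)_c$ at every level by Proposition \ref{45}. Consequently $J_\varepsilon|_{\mathcal{N}_\varepsilon}$ has at least $\mathrm{cat}_{\tilde{\mathcal{N}}_\varepsilon}(\tilde{\mathcal{N}}_\varepsilon)\geq\mathrm{cat}_{M_\delta}(M)$ critical points lying in $\tilde{\mathcal{N}}_\varepsilon$, and Proposition \ref{47} upgrades these to genuine critical points of $J_\varepsilon$ on $X_\varepsilon$, i.e.\ nontrivial solutions of $(\tilde{S}_\varepsilon)$. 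The main technical hurdle is the first step: the conclusion $\beta(\tilde{\mathcal{N}}_\varepsilon)\subset M_\delta$ requires the \emph{global} $L^p$-convergence of the translated sequence $(w_n)$ furnished by Lemma \ref{52}, not merely local compactness, in order to pass to the limit inside the integral defining $\beta$ uniformly in $z$.
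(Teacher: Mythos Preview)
Your proposal is correct and follows essentially the same Benci--Cerami--Cingolani--Lazzo strategy as the paper: apply Theorem~\ref{46} on the sublevel $\tilde{\mathcal{N}}_\varepsilon$ (using Propositions~\ref{45} and~\ref{47}), and compare categories via the maps $\Phi_\varepsilon$ and $\beta$. The only cosmetic difference is that the paper spells out the category inequality by hand (pulling back a contractible cover of $\tilde{\mathcal{N}}_\varepsilon$ through $\Phi_\varepsilon$) while you invoke the abstract comparison lemma; in fact you are more explicit than the paper about the crucial inclusion $\beta(\tilde{\mathcal{N}}_\varepsilon)\subset M_\delta$, which the paper hides behind the phrase ``by using the properties related with $\beta$'' but which indeed requires Lemma~\ref{52} exactly as you argue.
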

	\begin{proof}	In this proof we will employ the Theorem \ref{46} with $I=J_\varepsilon$, $V=\mathcal{N}_\varepsilon$ and $d=c_o+o_1(\varepsilon)$. In this case, we have $J_\varepsilon^d=\tilde{\mathcal{N}}_\varepsilon$. On account of Proposition \ref{45}, the functional $J_\varepsilon|_{\mathcal{N}_\varepsilon}$ verifies the $(PS)$ condition, and so, the Theorem \ref{46} guarantees that $J_\varepsilon|_{\mathcal{N}_\varepsilon}$ has at least $\text{cat}_{\tilde{\mathcal{N}}_\varepsilon}(\tilde{\mathcal{N}}_\varepsilon)$ critical points in $\tilde{\mathcal{N}}_\varepsilon=J_\varepsilon^d$. Thereby, by Proposition \ref{47}, $J_\varepsilon$ has $\text{cat}_{\tilde{\mathcal{N}}_\varepsilon}(\tilde{\mathcal{N}}_\varepsilon)$ critical points, from where it follows that $(\tilde{P}_\varepsilon)$ has at least $\text{cat}_{\tilde{\mathcal{N}}_\varepsilon}(\tilde{\mathcal{N}}_\varepsilon)$ solutions.
		
		In order to finish the proof, we will prove
		$$\text{cat}_{\tilde{\mathcal{N}}_\varepsilon}(\tilde{\mathcal{N}}_\varepsilon)\geq\text{cat}_{M_\delta}(M).$$
		Our argument follows the ideas of \cite[Section 6]{Cingolani-Lazzo}. It suffices to consider the case $\text{cat}_{\tilde{\mathcal{N}}_\varepsilon}(\tilde{\mathcal{N}}_\varepsilon)<\infty$. Let $n=\text{cat}_{\tilde{\mathcal{N}}_\varepsilon}(\tilde{\mathcal{N}}_\varepsilon)$ and take $A_1,...A_n$ closed and contractible sets in $\tilde{\mathcal{N}}_\varepsilon$ satisfying $\tilde{\mathcal{N}}_\varepsilon=\displaystyle{\bigcup_{i=1}^{n}} A_i$. In this way, it is possible to find $h_i \in C([0,1]\times A_i,\tilde{\mathcal{N}}_\varepsilon)$, with $h_i(0,u)=u$ and $h_i(1,u)=h_i(1,v_0^i)$, for some fixed $v_0^i \in A_i$, $i \in \{1,...,n\}$. Note that, by Lemma \ref{48}, we have $\Phi_\varepsilon(M)\subset \tilde{\mathcal{N}_\varepsilon}$ for $\varepsilon \approx 0^+$. Also, the map
		$$\beta \circ \Phi_\varepsilon:M \longrightarrow M_\delta$$
		is well defined for $\varepsilon\approx 0^+$. Set
		$$ \begin{aligned}
		\eta:[0,1&]\times M\longrightarrow M_\delta\\
		&(t,y) \,\,\,\longmapsto \eta(t,y)= t\beta(\Phi_{\varepsilon,y})+(1-t)y.
		\end{aligned}
		$$
		By using the properties related with $\beta$, one can see that $\eta$ is well defined and $\beta \circ \Phi_\varepsilon$ is homotopic to inclusion map $i: M \longrightarrow M_\delta$. Since $\Phi_\varepsilon$ is a continuous map, the sets $B_i:=\Phi_\varepsilon^{-1}(A_i)$ are closed subsets of $M$. In addition,
		\begin{equation}\label{49}
		M=\bigcup_{i=1}^{n} B_i.
		\end{equation}
		
		Now we are able to show that $n\geq \text{cat}_{M_\delta}(M)$. Indeed, it remains to prove that, for each $i \in \{1,...,n\}$, the set $B_i$ is contractible in $M_\delta$. To this aim, let
		$$ H_i:[0,1]\times B_i \longrightarrow M_\delta$$
		be given by
		$$H_i(t,u)=\left\{\begin{aligned}
		&\eta(2t,u),\,\,\,\, 0\leq t\leq \frac{1}{2};\\
		&g_i(2t-1),\,\,\,\, \frac{1}{2}\leq t\leq 1, 
		\end{aligned}
		\right.
		$$
		with $g_i(t,u):=\beta(h_i(t,\Phi_{\varepsilon,y}))$. The above conditions on $\eta$ and $h_i$ ensure that $H_i$ is well defined. Furthermore, 
		$$ H_i(0,y)=\eta(0,y)=y\,\,\,\,\text{and}\,\,\,\,H_i(1,y)=\beta(h_i(1,v_0^i)),\,\forall y \in B_i,$$
		which shows that $B_i$ is contractible in $M_\delta$. From (\ref{49}) we get the desired inequality. \end{proof}

	The result below points out an important property of the solutions of $(\tilde{S}_\varepsilon)$ obtained in the last theorem.
	
	\begin{proposition}[Positive solutions counting]
		There exists $\varepsilon_2>0$ such that, for $\varepsilon \in (0,\varepsilon_2)$, it holds
		\begin{itemize}
			\item[$i)$] $(\tilde{S}_\varepsilon)$ has at least $\frac{\text{cat}_{M_\delta}(M)}{2}$ positive solutions, if $\text{cat}_{M_\delta}(M)$ is an even number;
			\item[$ii)$] $(\tilde{S}_\varepsilon)$ has at least $\frac{\text{cat}_{M_\delta}(M)+1}{2}$ positive solutions, if $\text{cat}_{M_\delta}(M)$ is an odd number.  
		\end{itemize}	 
	\end{proposition}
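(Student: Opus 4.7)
The plan is to combine Proposition \ref{T1} with the $\mathbb{Z}_2$-symmetry of $J_\varepsilon$ (inherited from the oddness of $g_2(x,\cdot)$) by showing that, for $\varepsilon$ small, every critical point of $J_\varepsilon|_{\mathcal{N}_\varepsilon}$ found in $\tilde{\mathcal{N}}_\varepsilon$ is sign-definite, so that critical points come in pairs $\{u,-u\}$ each of which corresponds to a single positive solution. The candidate threshold is $c_0 + o_1(\varepsilon)$, and the key numerical input is that $c_\varepsilon \to c_0$ (Corollary \ref{44}), so that $2c_\varepsilon > c_0 + o_1(\varepsilon)$ for all sufficiently small $\varepsilon$.

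First, I would unfold the constructions: $g_2(x,\cdot)$ is odd by definition, $F_1$ is even, and the maps $u\mapsto\|\nabla u\|_2^2$ and $u\mapsto\int (V(\varepsilon x)+1)u^2$ are even; thus $J_\varepsilon$ is even on $X_\varepsilon$, the manifold $\mathcal{N}_\varepsilon$ is invariant under $u \mapsto -u$, and so is the sublevel set $\tilde{\mathcal{N}}_\varepsilon$. Next, for any critical point $u\in\tilde{\mathcal{N}}_\varepsilon$ of $J_\varepsilon$ I would test $J'_\varepsilon(u)=0$ against $u^+$ and $-u^-$: using that $F_1'$ and $g_2(\varepsilon x,\cdot)$ are odd, routine pointwise identities give $J_\varepsilon'(u)u^+ = J_\varepsilon'(u^+)u^+$ and $-J_\varepsilon'(u)u^- = J_\varepsilon'(u^-)u^-$, so whenever both $u^{\pm}\neq 0$ one has $u^{\pm}\in \mathcal{N}_\varepsilon$. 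Splitting the integrals on $\{u>0\}$ and $\{u<0\}$ then yields $J_\varepsilon(u)=J_\varepsilon(u^+)+J_\varepsilon(u^-)\geq 2c_\varepsilon$. Since $c_\varepsilon\to c_0>0$ by Corollary \ref{44}, for $\varepsilon$ small we get $2c_\varepsilon > c_0+o_1(\varepsilon) \geq J_\varepsilon(u)$, a contradiction; hence $u$ is sign-definite, and the maximum-principle argument of Proposition \ref{54} upgrades $|u|$ to a strictly positive solution of $(\tilde S_\varepsilon)$.

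Now the counting is purely combinatorial. Let $K_\varepsilon$ denote the set of critical points of $J_\varepsilon|_{\mathcal{N}_\varepsilon}$ contained in $\tilde{\mathcal{N}}_\varepsilon$; by Proposition \ref{T1}, $\#K_\varepsilon \geq \operatorname{cat}_{M_\delta}(M) =: n$. The involution $u\mapsto -u$ acts freely on $K_\varepsilon$ (no critical point equals $0$, by Proposition \ref{21}), so $K_\varepsilon$ is a disjoint union of orbits of size $2$, each producing exactly one positive solution via absolute value; in particular $\#K_\varepsilon$ is even. If $n$ is even this already gives $n/2$ positive solutions, while if $n$ is odd the evenness of $\#K_\varepsilon$ forces $\#K_\varepsilon\geq n+1$, hence $(n+1)/2$ positive solutions. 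Choosing $\varepsilon_2 \le \varepsilon_1$ small enough to validate simultaneously Proposition \ref{T1}, the sign-definiteness dichotomy above, and positivity from the maximum principle, completes the proof.

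I expect the main delicacy to be the sign-definiteness step: one must justify both the decomposition $J_\varepsilon(u)=J_\varepsilon(u^+)+J_\varepsilon(u^-)$ and the membership $u^\pm \in \mathcal{N}_\varepsilon$ on the nonsmooth Orlicz component, i.e.\ verifying that $u^+, u^- \in X_\varepsilon$ (which follows from $F_1$ being even and convex, so $F_1(u^+), F_1(u^-)\le F_1(u)\in L^1$) and carefully splitting the $F_1$- and $G_2$-integrals on $\{u>0\}$ and $\{u<0\}$, exploiting the evenness of $F_1$ and $G_2(x,\cdot)$. Once this is done, the comparison $2c_\varepsilon > c_0 + o_1(\varepsilon)$ via Corollary \ref{44} closes the argument cleanly.
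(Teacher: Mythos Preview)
Your proposal is correct and follows essentially the same approach as the paper: both first establish sign-definiteness of any critical point $u\in\tilde{\mathcal N}_\varepsilon$ via the inequality $J_\varepsilon(u)=J_\varepsilon(u^+)+J_\varepsilon(u^-)\ge 2c_\varepsilon>c_0+o_\varepsilon(1)$, and then use the oddness of the nonlinearity to convert negative solutions into positive ones. Your packaging of the final count through the free $\mathbb{Z}_2$-action on $K_\varepsilon$ (forcing $\#K_\varepsilon$ even) is a slightly cleaner way to handle the odd case than the paper's direct pigeonhole, and your explicit remarks on why $u^\pm\in X_\varepsilon$ and on the integral splitting are welcome details the paper leaves implicit, but the substance is the same.
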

	\begin{proof}
		Take $\varepsilon_2\approx 0^+$ and fix $\varepsilon \in(0,\varepsilon_2)$. If $v_\varepsilon$ is a critical point of $J_\varepsilon(v_\varepsilon)\leq c_0+o_\varepsilon(1)$, we must have $v_\varepsilon^+=0$ or $v_\varepsilon^-=0$. Otherwise, we would have $v_\varepsilon^+$, $v_\varepsilon^-\in \mathcal{N}_\varepsilon$, and so,
		$$
		2c_\varepsilon \leq J_\varepsilon(v_\varepsilon^+)+J_\varepsilon(v_\varepsilon^-)=J_\varepsilon(v_\varepsilon) \leq c_0+o_\varepsilon(1),
		$$
		which is a contradiction for  $\varepsilon_2\approx 0^+$. Therefore, using the same arguments of Lemma \ref{54}, we deduce that either $v_\varepsilon>0$ or $v_\varepsilon<0$. 
		
		Now, suppose that $k:=\text{cat}_{M_\delta}(M)$ is an even number and let $v_1,...,v_k$ be the solutions of $(\tilde{P}_\varepsilon)$ given in the preceding proposition. If at least $\frac{k}{2}$ of the solutions $v_1,...,v_k$ are positive solutions, the item $i)$ is proved. Otherwise, we know that at least $\frac{k}{2}$ of the solutions $v_1,...,v_k$ are negative. Denote by $w_1,...,w_{\frac{k}{2}}
		$ such negative solutions. Since $g_2(x,\cdot)-F'_1$ is an odd function, the functions $-w_1,...,-w_{\frac{k}{2}}$ are positive solutions of the problem
		\begin{equation}
		\left\{\begin{aligned}
		-&\Delta u + (V(\varepsilon x)+1)u =g_2(\varepsilon x,u)-F'_1(u),\;\;\mbox{in}\;\;\mathbb{R}^{N},\nonumber \\
		&u \in H^{1}(\mathbb{R}^{N})\cap L^{F_1}(\mathbb{R}^N).
		\end{aligned}
		\right.\leqno{(\tilde{S}_\varepsilon)}
		\end{equation}
		and thus $i)$ is proved. The proof of $ii)$ follows by a similar reasoning.
		\end{proof}

\subsection{Proof Theorem \ref{TherFinal}}
Let $v_\varepsilon$ be a critical point of $J_\varepsilon(v_\varepsilon)\leq c_0+o_\varepsilon(1)$. It suffices to show that there exists $\varepsilon_3\approx 0^+$ such that, for $\varepsilon \in (0,\varepsilon_3)$,
		\begin{equation}\label{51}
		0<v_\varepsilon(x)< t_1,\,\,\,\forall x \in \mathbb{R}^N-\Lambda_\varepsilon,
		\end{equation}
		for each solution $v_\varepsilon$ of $(\tilde{S}_\varepsilon)$ given in the items $i)-ii)$ of the last proposition. Arguing by contradiction, we get a sequence  $(v_{\varepsilon_n})$ of solutions of $(\tilde{S}_{\varepsilon_n})$ where $\varepsilon_n\rightarrow 0$ and $v_n:=v_{\varepsilon_n}$ does not  satisfy (\ref{51}). Note that the obtained sequence $(v_n)$ satisfies the hypothesis of Lemma \ref{52} and that the sequence $(w_n)$ given in the lemma must satisfy (\ref{39}). Thus, a contradiction is obtained by following closely the same ideas used in the proof of Theorem \ref{50}. This argument ensures that $(S_\varepsilon)$ verifies $i)-ii)$ in the statement of the Theorem \ref{TherFinal}. Now, the result follows by a change of variable. \;\;\;\;\;\;\;\;\;\;\;\;\;\;\;\;\; \hspace{11.5cm}$\Box$
	
\section{Final comments}
			In \cite{Alves-Giovany,Cingolani-Lazzo} the result of multiplicity of solution involving the Lusternik-Schnirelmann category assures the existence of at least $\text{cat}_{M_\delta}(M)$ positive solutions. In \cite{Alves-Giovany}, for example, the key point  is the fact that the nonlinearity $f$ was assumed such that $f(t)=0$, $t\leq 0$. In our case, this framework lead us to consider $f(t)=|t^+|^2\log |t^+|^2$, as well as,
			$$J_\varepsilon(u):=\frac{1}{2}\int_{\mathbb{R}^{N}}(|\nabla u|^2+(V(\varepsilon x)+1)|u|^2)+\int_{\mathbb{R}^N}F_1(u^+)-\int_{\mathbb{R}^{N}}G_2(\varepsilon x,u^+),\,\,\,\forall u \in X_\varepsilon.$$
			However, we were not able to reproduce some estimates made throughout this work by considering $J_\varepsilon$ given as above. For example, in the Lemma \ref{9}, we were not able to show the boundedness of the $(PS)$ sequences when $J_\varepsilon$ is chosen in this way. In fact, since the norm on $X_\varepsilon$ involves the norm $||\cdot||_{F_1}$ of Orlicz space $L^{F_1}(\mathbb{R}^N)$, we need of the information of term $\displaystyle{\int_{\mathbb{R}^N}}F_1(u)$ in our computations. This justifies 
			because our number of positive solutions by using the Lusternik-Schnirelmann category is a little bit different from that given in \cite{Alves-Giovany,Cingolani-Lazzo} .\\
\vspace{0.2 cm}

\noindent {\bf Acknowledgments:} The authors would like to thank the anonymous referees by their valuable and important suggestions, which were very important to improve the paper.

	
\end{document}